\newtheorem{theorem}{Theorem}[section]
\newtheorem{example}[theorem]{Example}
\newtheorem{definition}[theorem]{Definition}
\newtheorem{proposition}[theorem]{Proposition}
\newtheorem{corollary}[theorem]{Corollary}
\newtheorem{conjecture}[theorem]{Conjecture}
\newcommand{\inv}{inv}
\newcommand{\maj}{maj}
\newcommand{\tooth}{tooth}
\newcommand{\istat}{istat}
\title{Pattern Avoiding Linear Extensions of Rectangular Posets\footnote{2010 AMS Subject Classifications:  05A05, 05A15}}
\author{David Anderson \\ Carleton College \\ Department of Mathematics and Statistics \\ 1 North College Street \\ Northfield, MN 55057\\ davidjosepha@gmail.com\\ \\ Eric S. Egge\footnote{Corresponding author.} \\ Carleton College \\ Department of Mathematics and Statistics \\ 1 North College Street \\ Northfield, MN 55057 \\ eegge@carleton.edu\\  \\ Manda Riehl \\ University of Wisconsin-Eau Claire \\ Department of Mathematics \\ Hibbard Hall 512 \\ Eau Claire, WI 54701 \\ riehlar@uwec.edu\\ \\ Lucas Ryan \\ Carleton College \\ Department of Mathematics and Statistics \\ 1 North College Street \\ Northfield, MN 55057\\ lucas\_r23@yahoo.com \\ \\ Ruth Steinke \\ Carleton College \\ Department of Mathematics and Statistics \\ 1 North College Street \\ Northfield, MN 55057\\ ruth.steinke@gmail.com\\ \\ Yuriko Vaughan \\ Carleton College \\ Department of Mathematics and Statistics \\ 1 North College Street \\ Northfield, MN 55057\\ ysvaughan@gmail.com}
\begin{document}

\maketitle

\begin{abstract}
Inspired by Yakoubov's 2015 investigation of pattern avoiding linear extensions of the posets called combs, we study pattern avoiding linear extensions of rectangular posets.
These linear extensions are closely related to standard tableaux.
For positive integers $s$ and $t$ we consider two natural rectangular partial orders on $\{1,2,\ldots,s t\}$, which we call the NE rectangular order and the EN rectangular order.
First we enumerate linear extensions of both rectangular orders avoiding most sets of patterns of length three.
Then we use both a generating tree and a bijection to show that the linear extensions of the EN rectangular order which avoid 1243 are counted by the Fuss-Catalan numbers.
Next we use the transfer matrix method to enumerate linear extensions of the EN rectangular order which avoid 2143.
Finally, we open an investigation of the distribution of the inversion number on pattern avoiding linear extensions.

\medskip

Keywords:  Catalan number, lattice path, linear extension, partially ordered set, pattern avoiding permutation, standard tableaux.
\end{abstract}

\section{Introduction}

In \cite{combs} Yakoubov introduced an extensive new family of permutation enumeration problems.
To state the most general of these problems, suppose $n$ is a positive integer, $\Uparrow$ is a partial order on $[n]$, and $\sigma_1,\ldots,\sigma_k$ are permutations (see Section \ref{sec:bn} for definitions and notation).
Then Yakoubov's problem is to determine how many permutations of $[n]$ avoid $\sigma_1,\ldots,\sigma_k$ and also have the property that if $\pi(i) \Uparrow \pi(j)$ then $i < j$.
In other words, how many linear extensions of the poset $([n],\Uparrow)$ avoid $\sigma_1,\ldots,\sigma_k$?

As Yakoubov points out, this problem is hopelessly general without some additional information about $\Uparrow$.
For example, if $\Uparrow$ is empty (meaning no two elements of $[n]$ are related by $\Uparrow$) then Yakoubov's question reduces to the problem of enumerating the permutations avoiding a given set of patterns, a problem about which much has been written over the past thirty years, and about which much more is still unknown.
On the other hand, as Yakoubov also illustrates, for some particular families of partial orders we can make significant progress for a variety of short forbidden patterns.
In particular, Yakoubov obtains simple closed formulas for the number of linear extensions of posets she calls combs (see Figure \ref{fig:comb} for two typical examples) which avoid various sets of patterns of length three.

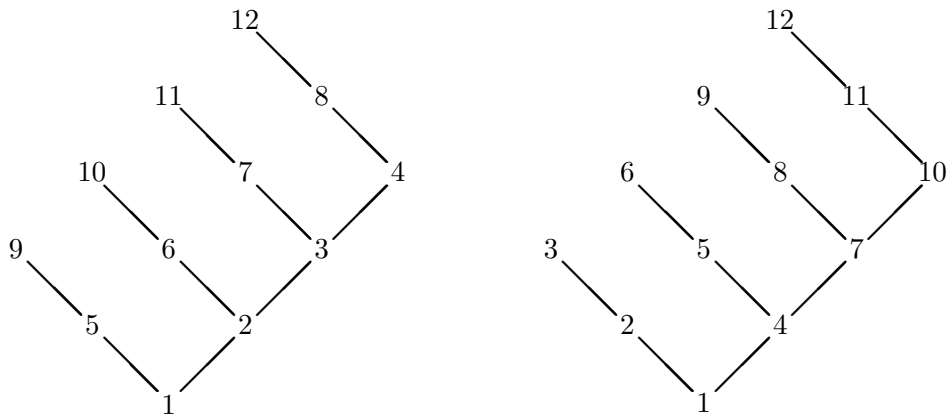
\begin{figure}[ht]
\setlength{\unitlength}{.4in}
\centering
\begin{picture}(12,5.2)
\thicklines
\multiput(0.15,1.85)(1,1){4}{\line(1,-1){.7}}
\multiput(1.15,.85)(1,1){4}{\line(1,-1){.7}}
\multiput(2.15,0.15)(1,1){3}{\line(1,1){.7}}
\put(1.9,-.15){$1$}
\put(2.9,.9){$2$}
\put(3.9,1.9){$3$}
\put(4.9,2.9){$4$}
\put(.9,.9){$5$}
\put(1.9,1.9){$6$}
\put(2.9,2.9){$7$}
\put(3.9,3.9){$8$}
\put(-.1,1.9){$9$}
\put(.8,2.9){$10$}
\put(1.8,3.9){$11$}
\put(2.8,4.9){$12$}
\multiput(7.15,1.85)(1,1){4}{\line(1,-1){.7}}
\multiput(8.15,.85)(1,1){4}{\line(1,-1){.7}}
\multiput(9.15,0.15)(1,1){3}{\line(1,1){.7}}
\put(8.9,-.12){$1$}
\put(9.9,.9){$4$}
\put(10.9,1.9){$7$}
\put(11.8,2.9){$10$}
\put(7.9,.9){$2$}
\put(8.9,1.9){$5$}
\put(9.9,2.9){$8$}
\put(10.8,3.9){$11$}
\put(6.9,1.9){$3$}
\put(7.9,2.9){$6$}
\put(8.9,3.9){$9$}
\put(9.8,4.9){$12$}
\end{picture}
\caption{The Hasse diagrams of a comb of type $\alpha$ (left) and $\beta$ (right).}
\label{fig:comb}
\end{figure}

In this paper we extend Yakoubov's investigation by studying pattern avoiding linear extensions of rectangular posets (see Figures \ref{fig:03rect}
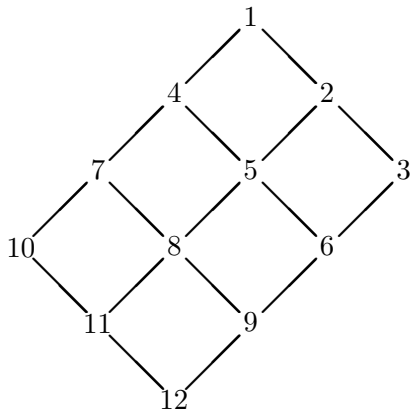
\begin{figure}[ht]
\centering
\setlength{\unitlength}{.4in}
\begin{picture}(5,5.2)
\thicklines
\multiput(0.15,1.85)(1,1){4}{\line(1,-1){.7}}
\multiput(1.15,.85)(1,1){4}{\line(1,-1){.7}}
\multiput(2.15,0.15)(1,1){3}{\line(1,1){.7}}
\multiput(1.15,1.15)(1,1){3}{\line(1,1){.7}}
\multiput(.15,2.15)(1,1){3}{\line(1,1){.7}}
\put(1.8,-.12){$12$}
\put(2.9,.9){$9$}
\put(3.9,1.9){$6$}
\put(4.9,2.9){$3$}
\put(.8,.88){$11$}
\put(1.9,1.9){$8$}
\put(2.9,2.9){$5$}
\put(3.9,3.9){$2$}
\put(-.2,1.88){$10$}
\put(.9,2.9){$7$}
\put(1.9,3.9){$4$}
\put(2.9,4.9){$1$}
\end{picture}
\caption{The Hasse diagram of the rectangular poset $NE_{4,3}$.}
\label{fig:03rect}
\end{figure}
and \ref{fig:30rect}
\begin{figure}[ht]
\setlength{\unitlength}{.4in}
\centering
\begin{picture}(5,5)
\thicklines
\multiput(0.15,1.85)(1,1){4}{\line(1,-1){.7}}
\multiput(1.15,.85)(1,1){4}{\line(1,-1){.7}}
\multiput(2.15,0.15)(1,1){3}{\line(1,1){.7}}
\multiput(1.15,1.15)(1,1){3}{\line(1,1){.7}}
\multiput(.15,2.15)(1,1){3}{\line(1,1){.7}}
\put(1.86,-.1){$10$}
\put(2.9,.9){$7$}
\put(3.9,1.9){$4$}
\put(4.9,2.9){$1$}
\put(.86,.9){$11$}
\put(1.9,1.9){$8$}
\put(2.9,2.9){$5$}
\put(3.9,3.9){$2$}
\put(-.14,1.9){$12$}
\put(.9,2.9){$9$}
\put(1.9,3.9){$6$}
\put(2.9,4.9){$3$}
\end{picture}
\caption{The Hasse diagram of the rectangular poset $EN_{4,3}$.}
\label{fig:30rect}
\end{figure}
for two typical examples).
In Section \ref{sec:bn} we give some background on pattern avoidance and partially ordered sets, we define the particular posets we plan to study, and we prove some preliminary results to reduce the scope of our problem.
In Section \ref{sec:lengththree} we consider linear extensions avoiding sets of patterns of length three, finding simple closed formulas in most cases.
In Section \ref{sec:gentrees} we use a generating tree to show that the Fuss-Catalan numbers enumerate the linear extensions of a particular poset which avoid 1243.
We then give a natural bijection between this set of linear extensions and the set of Fuss-Catalan paths.
In Section \ref{sec:transfermatrices} we use the transfer matrix method to enumerate a set of linear extensions avoiding 2143.
In Section \ref{sec:statistics} we extend Yakoubov's work in another direction, studying the distribution of the inversion number on pattern avoiding linear extensions of our rectangular posets.
Finally, in Section \ref{sec:opfd} we describe future directions for this research along with some open problems.

\section{Background and Notation}
\label{sec:bn}

Our problem has two main ingredients:  pattern avoiding permutations and linear extensions of rectangular partially ordered sets.
To describe pattern avoiding permutations, let $n$ be a positive integer, let $[n]$ denote the set $\{1,2,\ldots,n\}$, and let $S_n$  denote the set of permutations of $[n]$, written in one-line notation.
For any permutation $\pi$, we write $|\pi|$ to denote the length of $\pi$, so $|\pi| = n$ is equivalent to $\pi \in S_n$.
We say a permutation $\pi$ {\em contains} a permutation $\sigma$ whenever $\pi$ has a subsequence with the same relative order as $\sigma$, and we say $\pi$ {\em avoids} $\sigma$ whenever $\pi$ has no such subsequence.
For example, $786549312$ has subsequence $8491$, so it contains $3241$.
However, $786549312$ has no subsequence with the same relative order as $132$, so $786549312$ avoids $132$.
In this context we sometimes call $\sigma$ a {\em forbidden pattern} and we sometimes call $\pi$ a {\em pattern avoiding permutation}.
Pattern avoiding permutations have received a considerable amount of attention over the past thirty years;  for more information, see Kitaev's encyclopedic book \cite{Kitaev} and the references he cites.

Our language and notation for partially ordered sets, their Hasse diagrams, and their linear extensions will follow \cite[Chap.~3]{StanleyVol1}, but for convenience we summarize these ideas here.
Recall that a {\em partial ordering} $\Uparrow$ of a set $X$ is a reflexive, antisymmetric, and transitive binary relation on $X$.
A {\em partially ordered set} (or {\em poset}, for short) $(X,\Uparrow)$ is a set $X$ together with a partial ordering $\Uparrow$ of $X$.
If $a,b \in X$ are distinct elements of a poset $(X,\Uparrow)$, then we say $a$ {\em covers} $b$ whenever $b \Uparrow a$ and there is no element $c \in X$ which is distinct from $a$ and $b$ and for which $b \Uparrow c \Uparrow a$.
A {\em Hasse diagram} of a poset $(X,\Uparrow)$ is a drawing of the graph whose vertices are the elements of $X$, in which two vertices are connected by an edge whenever one vertex covers the other in the partial ordering.
Typically we make this partial ordering apparent in the diagram by drawing $a$ above $b$ whenever $b \Uparrow a$.
Because Hasse diagrams are often the clearest way to describe the relations in a poset, we sometimes use them to define our posets.

For any poset $P = (X,\Uparrow)$, a {\em linear extension} of $P$ is a total ordering $\prec$ of $X$ which is consistent with $\Uparrow$.
In other words, if $a \Uparrow b$ then $a \prec b$.
When $X = [n]$ for some positive integer $n$, these linear extensions are naturally associated with permutations in $S_n$.
Specifically, the permutation $\pi$ associated with $\prec$ is the permutation with $\pi(1) \prec \pi(2) \prec \cdots \prec \pi(n)$.
For example, if $P$ is the poset whose Hasse diagram is given in Figure \ref{fig:Xposet}, 
\begin{figure}[h]
\centering
\setlength{\unitlength}{.4in}
\begin{picture}(2,2)
\thicklines
\multiput(.15,.15)(1,1){2}{\line(1,1){.7}}
\multiput(1.85,.15)(-1,1){2}{\line(-1,1){.7}}
\put(-0.1,-0.1){$5$}
\put(1.9,-.1){$2$}
\put(.9,.9){$4$}
\put(-.1,1.9){$3$}
\put(1.9,1.9){$1$}
\end{picture}
\caption{The Hasse diagram of a poset on $[5]$.}
\label{fig:Xposet}
\end{figure}
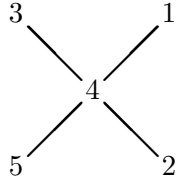
then $P$ has four linear extensions:  $25413$, $25431$, $52413$, and $52431$.

Many of our posets will have the property that their Hasse diagrams can be represented as tilted rectangles, as in Figure \ref{fig:30rect};  we call such a poset {\em rectangular}.
Following Yakoubov, for a given rectangular representation of a rectangular poset, we call the sequences of elements in the diagonals from lower left to upper right the {\em spines}, and we write $s$ to denote the number of elements in each spine.
Similarly, we call the sequences of elements in the diagonals from lower right to upper left the {\em teeth}, and we write $t$ to denote the number of elements in each tooth.
We number the spines from upper left to lower right, and the teeth from upper right to lower left.
In Figure \ref{fig:30rect} we have $s = 4$, $t = 3$, the first spine contains $3$, $6$, $9$, and $12$, and the third tooth contains $7$, $8$, and $9$.

Given positive integers $s$ and $t$, there are eight natural rectangular partial orderings of $[st]$ with spine length $s$ and tooth length $t$.
For example, in one natural rectangular partial ordering the first spine is $1,2,\ldots,s$ from first tooth to last, the second spine is $s+1,s+2,\ldots, 2s$ from first tooth to last, and in general the $j$th spine is $(j-1)s +1, (j-1)s+2, \ldots, js$ from first tooth to last.
In another natural rectangular partial ordering the $s$th tooth is $t, t-1, \ldots, 1$ from first spine to last, the $(s-1)$th tooth is $2t, 2t-1, \ldots, t+1$ from first spine to last, and in general the $(s-j)$th tooth is $(j+1)t, (j+1)t-1, \ldots, jt + 1$ from first spine to last.
In each of these eight natural rectangular partial orderings one spine or tooth is $1,2,\ldots$ in order, so we name each ordering according to the corners of the Hasse diagram (North, East, South, or West) at which this spine or tooth begins and ends.
For example, in Figure \ref{fig:30rect} we have the Hasse diagram for $EN_{4,3}$, and in Figure \ref{fig:sw24}
\begin{figure}[ht]
\setlength{\unitlength}{.35in}
\centering
\begin{picture}(4,4)
\thicklines
\multiput(3.15,0.15)(-1,1){4}{\line(1,1){.7}}
\multiput(2.85,.15)(1,1){2}{\multiput(0,0)(-1,1){3}{\line(-1,1){.7}}}
\put(2.9,-.1){$1$}
\put(1.9,.9){$2$}
\put(.9,1.9){$3$}
\put(-.1,2.9){$4$}
\put(3.9,.9){$5$}
\put(2.9,1.9){$6$}
\put(1.9,2.9){$7$}
\put(.9,3.9){$8$}
\end{picture}
\caption{The Hasse diagram of the rectangular poset $SW_{2,4}$.}
\label{fig:sw24}
\end{figure}
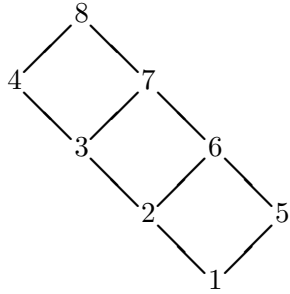
we have the Hasse diagram for $SW_{2,4}$.
Finally, for any rectangular poset $P = ([st],\Uparrow)$ and any permutations $\sigma_1,\ldots, \sigma_n$, we write $P(\sigma_1,\ldots,\sigma_n)$ to denote the set of linear extensions of $P$ which avoid each of $\sigma_1,\ldots,\sigma_n$.

Although there are eight natural rectangular partial orderings on $[st]$, when counting their pattern avoiding linear extensions we need only concern ourselves with two of them.
For instance, by reflecting their Hasse diagrams over a vertical line, we can see that $EN_{s,t} = WN_{t,s}$, $NE_{s,t} = NW_{t,s}$, $ES_{s,t} = WS_{t,s}$, and $SE_{s,t} = SW_{t,s}$.
Similarly, reflecting a Hasse diagram over a horizontal line has the effect of reversing the linear extensions of the associated poset.
Therefore, for any permutations $\sigma_1,\ldots, \sigma_n$ we have $|ES_{s,t}(\sigma_1,\ldots,\sigma_n)| = |EN_{t,s}(\sigma_1^r,\ldots,\sigma_n^r)|$, $|SE_{s,t}(\sigma_1,\ldots,\sigma_n)| = |NE_{t,s}(\sigma_1^r,\ldots,\sigma_n^r)|$, $|WS_{s,t}(\sigma_1,\ldots,\sigma_n)| = |WN_{t,s}(\sigma_1^r,\ldots,\sigma_n^r)|$, and finally $|SW_{s,t}(\sigma_1,\ldots,\sigma_n)| = |NW_{t,s}(\sigma_1^r,\ldots,\sigma_n^r)|$, where $\sigma_j^r$ is the reverse of $\sigma_j$.
Combining these observations, we see we can restrict our attention to $EN_{s,t}(\sigma_1,\ldots,\sigma_n)$ and $NE_{s,t}(\sigma_1,\ldots,\sigma_n)$.

In addition to reducing the collection of posets we need to consider, we can also reduce the collection of forbidden patterns we need to consider.
In particular, if we write $\sigma^{rc}$ to denote the reverse complement of a permutation $\sigma$, then we have the following result.

\begin{proposition}
\label{prop:rc}
For any rectangular poset $P$ and any permutations $\sigma_1,\ldots,\sigma_n$, we have 
$$|P(\sigma_1,\ldots,\sigma_n)| = |P(\sigma_1^{rc},\ldots,\sigma_n^{rc})|.$$
\end{proposition}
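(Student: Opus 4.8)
The plan is to show that the reverse–complement map $\pi \mapsto \pi^{rc}$ is a bijection from $P(\sigma_1,\ldots,\sigma_n)$ onto $P(\sigma_1^{rc},\ldots,\sigma_n^{rc})$. Two facts are needed: first, that this map respects pattern containment in the right way, and second, that it sends linear extensions of $P$ to linear extensions of $P$. Since $\pi^{rc}$ is the permutation obtained by rotating the plot of $\pi$ by $180^\circ$, it is a standard observation that $\pi$ contains $\sigma$ if and only if $\pi^{rc}$ contains $\sigma^{rc}$; consequently $\pi$ avoids each of $\sigma_1,\ldots,\sigma_n$ if and only if $\pi^{rc}$ avoids each of $\sigma_1^{rc},\ldots,\sigma_n^{rc}$. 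I would state and verify this first, as it disposes of the pattern side completely.

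The crux is the poset side. Writing $N = st$, recall that $\pi^{rc}(i) = N+1-\pi(N+1-i)$, so $\pi^{rc}$ is obtained from $\pi$ by reversing the word and then applying the label map $\phi(k) = N+1-k$. The key lemma I would prove is that $\phi$ is an anti-automorphism of $P$, that is, $a \Uparrow b$ if and only if $\phi(b) \Uparrow \phi(a)$. Geometrically, $\phi$ is exactly the relabeling induced by rotating the Hasse diagram of $P$ by $180^\circ$: this rotation carries the tilted rectangle to itself while interchanging its bottom and top, hence is order-reversing, and one checks directly from the natural labelings (whose labels increase regularly along the spines and teeth) that it sends the element labeled $k$ to the element labeled $N+1-k$. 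I would verify this by comparing the two ends of a spine or a tooth; by the reductions above it suffices to treat $EN_{s,t}$ and $NE_{s,t}$.

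Granting the lemma, the remainder is a short computation. If $\pi$ is a linear extension of $P$ and $a \Uparrow b$, then $\phi(b)\Uparrow\phi(a)$ by the anti-automorphism property, so $\phi(b)$ precedes $\phi(a)$ in $\pi$; unwinding $\pi^{rc}(i) = N+1-\pi(N+1-i)$, the position of a label $\ell$ in $\pi^{rc}$ is $N+1-\pi^{-1}(\phi(\ell))$, and this identity turns ``$\phi(b)$ precedes $\phi(a)$ in $\pi$'' into ``$a$ precedes $b$ in $\pi^{rc}$''. Hence $\pi^{rc}$ is again a linear extension of $P$. Combining the two sides, $\pi\mapsto\pi^{rc}$ maps $P(\sigma_1,\ldots,\sigma_n)$ into $P(\sigma_1^{rc},\ldots,\sigma_n^{rc})$; and since $rc$ is an involution on $S_N$ with $(\sigma_i^{rc})^{rc}=\sigma_i$, the same map run in reverse supplies the inverse, so it is a bijection and the two sets have equal cardinality.

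I expect the only genuine obstacle to be the anti-automorphism lemma—specifically, pinning down that the $180^\circ$ rotation realizes the arithmetic map $k\mapsto N+1-k$ on labels. Once the labelings are written out this is routine, but it is the one place where the proof uses the particular numbering of the rectangular posets rather than merely their order structure.
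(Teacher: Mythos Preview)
Your proposal is correct and follows essentially the same line as the paper's proof. Both reduce to showing that the set of linear extensions of $P$ is closed under $rc$, using the standard fact that $\pi$ avoids $\sigma$ iff $\pi^{rc}$ avoids $\sigma^{rc}$; the paper verifies closure by factoring $rc$ as a chain of reflections through the intermediate posets $WN_{t,s}$ and $WS_{s,t}$, which is just a geometric way of exhibiting your anti-automorphism $k\mapsto N+1-k$.
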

\begin{proof}
We prove the result for $EN_{s,t}$;  the other cases are similar.

Since a permutation $\pi$ avoids a permutation $\sigma$ if and only if $\pi^{rc}$ avoids $\sigma^{rc}$, and $rc$ is invertible, it's sufficient to show that the set of linear extensions of $EN_{s,t}$ is closed under $rc$.
To do this, we describe $rc$ in terms of bijections on sets of linear extensions.

We have observed that reflecting the Hasse diagram of $EN_{s,t}$ over a vertical line induces a bijection between the set of linear extensions of $EN_{s,t}$ and the set of linear extensions of $WN_{t,s}$.
In fact, this bijection is the identity map.
Similarly, reflection over a horizontal line induces a bijection between the set of linear extensions of $WN_{t,s}$ and the set of linear extensions of $WS_{s,t}$.
In fact, this bijection simply reverses each linear extension.
If we now replace each $j \in [st]$ with its complement $st+1-j$, then we have a bijection from the set of linear extensions of $WS_{s,t}$ to the set of linear extensions of $EN_{s,t}$.
The composition of these three maps is $rc$, so the set of linear extensions of $EN_{s,t}$ is closed under $rc$, as desired.
\end{proof}

We now have all of the background and notation we need to begin counting pattern avoiding linear extensions of $EN_{s,t}$ and $NE_{s,t}$.
Before we do this, it's natural to ask what happens when there are no patterns to avoid.
That is, how many linear extensions of $EN_{s,t}$ and $NE_{s,t}$ are there?

To address this question, first note that the answer depends only on the shape of the Hasse diagram, and not on the names of the elements of the posets, so $EN_{s,t}$ and $NE_{s,t}$ have the same number of linear extensions.
Furthermore, the linear extensions of $EN_{s,t}$ are in bijection with the labelings of the vertices of its Hasse diagram with $1,2,\ldots,st$ such that if $x \Uparrow y$ for vertices $x$ and $y$ then the label on $x$ is less than the label on $y$.
Now recall that a {\em standard tableaux of shape $\underbrace{t,\cdots, t}_s = t^s$} is a filling of a $s \times t$ rectangle with $1,2,\ldots,st$ in which the entries in each row are strictly increasing from left to right, the entries in each column are strictly increasing from top to bottom, and each of $1,2,\ldots,st$ appears exactly once.
Suppose we have a linear extension $\pi$ of $EN_{s,t}$.
Number each vertex of the Hasse diagram for $EN_{s,t}$ with its position in $\pi$, rotate the diagram through $3\pi/4$ radians clockwise, and enclose each vertex in a square.
When $\pi$ is the linear extension of $EN_{4,3}$ given by $10\ 7\ 11\ 4\ 1\ 8\ 12\ 5\ 2\ 9\ 6\ 3$, the resulting object is the standard tableaux in Figure \ref{fig:stdtabex}.
\begin{figure}[ht]
\centering
\begin{tabular}{|c|c|c|}
\hline
$1$ & $3$ & $7$ \\
\hline
$2$ & $6$ & $10$ \\
\hline
$4$ & $8$ & $11$ \\
\hline
$5$ & $9$ & $12$ \\
\hline
\end{tabular}
\caption{The standard tableaux associated with $10\ 7\ 11\ 4\ 1\ 8\ 12\ 5\ 2\ 9\ 6\ 3$.}
\label{fig:stdtabex}
\end{figure}
In general, the resulting object is a standard tableaux of shape $t^s$, and this procedure is a bijection between the set of linear extensions of $EN_{s,t}$ and the set of standard tableaux of shape $t^s$.
Therefore, the classical hook length formula \cite{wilfhooklength} \cite[Sec.~3.10]{sagan} gives us the following result. 

\begin{proposition}
\label{prop:totalcount}
For any positive integers $s$ and $t$, the number of linear extensions of $EN_{s,t}$ (or $NE_{s,t}$) is ${\displaystyle (st)! \prod_{j=1}^t \frac{(s+t-j)!}{(j-1)!}}$.
\end{proposition}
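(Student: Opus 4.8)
The plan is to turn the count into a standard tableaux enumeration and then apply the hook length formula quoted above. Most of the conceptual work is already done in the excerpt: recording, for each vertex of the Hasse diagram of $EN_{s,t}$, its position in a linear extension $\pi$ and then rotating the tilted rectangle into an $s \times t$ grid gives a bijection between the linear extensions of $EN_{s,t}$ and the standard tableaux of shape $t^s$. Because the number of linear extensions depends only on the shape of the Hasse diagram, the same count holds for $NE_{s,t}$. So I would reduce to counting standard tableaux of the rectangular shape $t^s$, for which the hook length formula gives $(st)!$ divided by the product of the hook lengths over all $st$ cells.

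The heart of the proof is then a single computation: evaluating that product of hook lengths and recognizing it as $\prod_{j=1}^t \frac{(s+t-j)!}{(j-1)!}$. For the cell in row $i$ and column $j$ (with $1 \le i \le s$ and $1 \le j \le t$) the arm length is $t-j$ and the leg length is $s-i$, so its hook length is $(s-i)+(t-j)+1$. I would group the product by columns: fixing $j$ and letting $i$ run from $1$ to $s$, the hook lengths in that column are the consecutive integers $t-j+1, t-j+2, \ldots, s+t-j$, whose product is $\frac{(s+t-j)!}{(t-j)!}$.

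Multiplying over the $t$ columns gives
$$\prod_{j=1}^t \frac{(s+t-j)!}{(t-j)!} = \frac{\prod_{j=1}^t (s+t-j)!}{\prod_{j=1}^t (t-j)!}.$$
The only step needing a moment's care is matching the denominator to the stated form: as $j$ runs over $1,\ldots,t$ the factors $(t-j)!$ are exactly $(t-1)!, (t-2)!, \ldots, 0!$, which is the same multiset as $(j-1)!$ for $j = 1, \ldots, t$, so $\prod_{j=1}^t (t-j)! = \prod_{j=1}^t (j-1)!$. Hence the product of hook lengths equals $\prod_{j=1}^t \frac{(s+t-j)!}{(j-1)!}$, and the hook length formula gives the number of linear extensions as $\dfrac{(st)!}{\prod_{j=1}^t \frac{(s+t-j)!}{(j-1)!}}$, as stated.

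There is no serious obstacle here once the bijection and the hook length formula are available; the entire content is the correct bookkeeping of the rectangular hook lengths. The one place to be careful is keeping the arm and leg conventions straight so that each column's hooks telescope to a ratio of factorials, and then reindexing $(t-j)!$ as $(j-1)!$ to land on the form in the proposition.
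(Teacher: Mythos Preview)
Your approach is the same as the paper's: the paragraph preceding the proposition sets up the bijection with standard tableaux of shape $t^s$, and the paper then simply invokes the hook length formula without writing out the column-by-column simplification you provide. Your bookkeeping is correct: the hooks in column $j$ run over $t-j+1,\ldots,s+t-j$, their product is $(s+t-j)!/(t-j)!$, and the reindexing $(t-j)!\leftrightarrow(j-1)!$ is fine.

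One thing worth flagging: your final expression, $\dfrac{(st)!}{\prod_{j=1}^t \frac{(s+t-j)!}{(j-1)!}}$, is the correct count, but the proposition as printed has $(st)!$ multiplied by that product rather than divided by it. A quick sanity check with $s=t=2$ shows your version gives $24/12=2=C_2$ (consistent with Corollary~\ref{cor:linextCat}), whereas the formula as literally typeset gives $288$. So this is a typo in the paper, not an error on your part; you may want to note the discrepancy rather than write ``as stated.''
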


Since the Catalan numbers are so ubiquitous, it's worth noting their appearance in a special case of Proposition \ref{prop:totalcount}.

\begin{corollary}
\label{cor:linextCat}
For any positive integer $n$, the number of linear extensions of any of $EN_{n,2}$, $NE_{n,2}$ $EN_{2,n}$, or $EN_{2,n}$ is the Catalan number ${\displaystyle C_n = \frac{1}{n+1} \binom{2n}{n}}$.
\end{corollary}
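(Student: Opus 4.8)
The plan is to reduce all four posets to a single enumeration and then evaluate the formula of Proposition~\ref{prop:totalcount}. First I would eliminate the redundancy among the four posets listed. Since the number of linear extensions depends only on the shape of the Hasse diagram, $EN_{n,2}$ and $NE_{n,2}$ have the same count, as do $EN_{2,n}$ and $NE_{2,n}$ (the fourth poset in the statement should presumably read $NE_{2,n}$), so two of the cases come for free. To collapse the remaining two, I would invoke the bijection developed just before Proposition~\ref{prop:totalcount}, which identifies linear extensions of $EN_{s,t}$ with standard tableaux of shape $t^s$. Under it, linear extensions of $EN_{n,2}$ correspond to standard tableaux of the $n \times 2$ rectangle $2^n$, and those of $EN_{2,n}$ to standard tableaux of the $2 \times n$ rectangle $n^2$. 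These two shapes are transposes of one another, and transposing a standard tableaux exchanges the row and column conditions, giving a bijection between the two sets; hence their cardinalities agree and it suffices to count one of them.

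Next I would specialize to the $2 \times n$ rectangle, which is the most convenient. Applying the hook length formula (equivalently, specializing Proposition~\ref{prop:totalcount} at $s = 2$, $t = n$), the hook lengths along the top row are $n+1, n, \ldots, 2$ and along the bottom row are $n, n-1, \ldots, 1$, so their product is $(n+1)!\,n!$ and the number of standard tableaux is
$$\frac{(2n)!}{(n+1)!\,n!} = \frac{1}{n+1}\binom{2n}{n} = C_n.$$
Alternatively, one could simply cite the classical fact that standard tableaux of a two-row rectangle are counted by the Catalan numbers, via the ballot interpretation of the two rows as a lattice path, which reaches the same conclusion with no factorial manipulation.

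I do not expect any genuine obstacle here, since every ingredient is elementary. The only point requiring a little care is the bookkeeping that lets all four posets be treated at once: invoking the shape-only dependence of the count to merge each $EN$/$NE$ pair, and the transposition symmetry of standard tableaux to merge the $2^n$ and $n^2$ shapes, so that everything reduces to the single clean evaluation above.
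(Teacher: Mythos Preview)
Your proposal is correct and matches the paper's approach: the corollary is stated without proof as an immediate special case of Proposition~\ref{prop:totalcount}, and you carry out exactly that specialization, with the extra (but harmless) step of invoking the transposition symmetry of standard tableaux to reduce the two rectangle orientations to a single computation. The paper simply leaves all of this implicit, so your write-up is more detailed but not different in substance.
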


\section{Avoiding Patterns of Length Three}
\label{sec:lengththree}

In this section we enumerate the linear extensions of $EN_{s,t}$ and $NE_{s,t}$ avoiding various sets of patterns of length three. 
We begin with linear extensions of $EN_{s,t}$.

\begin{theorem}
\label{thm:EN132213}
For all $s \ge 1$ and all $t \ge 1$ we have $|EN_{s,t}(213)| = 1$.
\end{theorem}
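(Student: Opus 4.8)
The plan is to coordinatize $EN_{s,t}$ and then peel off one tooth at a time, showing that avoiding $213$ together with the linear-extension constraint forces every entry. First I would record the structure of the poset: write the label of the element in tooth $a$ at height $b$ as $v(a,b)=t(a-1)+b$, where $1\le a\le s$ and $1\le b\le t$. Comparing labels, one finds $v(a,b)\Uparrow v(a',b')$ precisely when $a\ge a'$ and $b\le b'$ (generated by the within-tooth relations $v(a,b)\Uparrow v(a,b+1)$ and the spine relations $v(a+1,b)\Uparrow v(a,b)$). I would single out two facts for later use: the global minimum is $v(s,1)=t(s-1)+1$, and tooth $s$, namely $v(s,1),\dots,v(s,t)=t(s-1)+1,\dots,st$, is a chain consisting of the $t$ largest labels.

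Next I would force the first $t$ entries. Since the global minimum lies below every other element, every linear extension begins with $v(s,1)=t(s-1)+1$, so $\pi(1)=t(s-1)+1$. Now view this first entry as the ``$2$'' of a potential $213$: the labels larger than it are exactly $v(s,2),\dots,v(s,t)$, the remaining $t-1$ elements of tooth $s$, while every other label is smaller. If some smaller label preceded some larger label among positions $2,\dots,st$, that pair together with $\pi(1)$ would form a $213$; hence all $t-1$ larger labels must occupy positions $2,\dots,t$. Because $v(s,2)\Uparrow\cdots\Uparrow v(s,t)$ is a chain, the extension must list them in increasing order, so positions $1,\dots,t$ are forced to be tooth $s$ in increasing order: $t(s-1)+1,\,t(s-1)+2,\,\dots,\,st$.

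Finally I would decouple this prefix and induct on $s$. The prefix just produced is an increasing run of the $t$ largest labels sitting at the front, and a short lemma shows that prepending such a block cannot create a $213$: in any occurrence at positions $i<j<k$ that involves a prefix entry, either $i$ is a prefix position, which forces the larger third entry, and hence all three, into the prefix where the run is increasing, or $i$ lies beyond the prefix, which forces $j$ and $k$ beyond it as well. Thus the full word avoids $213$ if and only if the suffix $\pi(t+1)\cdots\pi(st)$ does, and that suffix is an arbitrary $213$-avoiding linear extension of the induced subposet on teeth $1,\dots,s-1$, which carries identical labels and is order-isomorphic to $EN_{s-1,t}$. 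Therefore $|EN_{s,t}(213)|=|EN_{s-1,t}(213)|$, and induction reduces to the base case $s=1$, where $EN_{1,t}$ is the single chain $1\Uparrow2\Uparrow\cdots\Uparrow t$ with its unique increasing (hence $213$-avoiding) extension. I expect the one genuinely delicate point to be this decoupling step, namely checking that the forced prefix is inert for $213$-patterns so that the recursion is exact; the placement of the prefix itself falls out directly from the minimality and chain structure already recorded.
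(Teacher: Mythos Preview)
Your proof is correct. The core idea is the same as the paper's---the teeth of $EN_{s,t}$ must appear in the order tooth $s$, tooth $s-1$, \ldots, tooth $1$, each listed increasingly---but you organize it differently. You peel off tooth $s$ and induct on $s$, carefully verifying a decoupling lemma (the increasing prefix of the $t$ largest labels is inert for $213$). The paper instead makes a single direct observation: if any element of tooth $i$ precedes any element of tooth $i+1$, then a $213$ appears (implicitly using that the bottom $it+1$ of tooth $i+1$ lies below every element of tooth $i$ in the poset, so it furnishes the ``$2$''). This immediately pins down the unique extension without induction. Your route is longer but more explicit about why each step holds; the paper's is a two-line sketch that leaves the reader to locate the $213$ witness. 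Both arrive at the same unique linear extension $st{-}t{+}1,\ldots,st,\,st{-}2t{+}1,\ldots,st{-}t,\,\ldots,\,1,\ldots,t$.
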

\begin{proof}
Notice that if any element of the $i$th tooth precedes an element of the $(i+1)$th tooth in a linear extension $\pi$ of $NE_{s,t}$, then $\pi$ contains 213. 
Therefore, only the linear extension $st-t+1, st-t+2,\ldots, st, st-2t+1,\ldots, st-t, \ldots, 1, 2,\dots, t$ can be in $EN_{s,t}(213)$. 
This permutation avoids 213, and the result follows. 
\end{proof}

Our proof of Theorem \ref{thm:EN132213} amounts to showing that $EN_{s,t}(213)$ is the set of linear extensions of the poset we obtain by adding certain covering relations to $EN_{s,t}$. 
In particular, in this new poset we require that $1+(j-1)t$ (the lower right element of the $j$th tooth) covers $(j+1)t$ (the upper left element of the $(j+1)$th tooth).
See Figure \ref{fig:213saw}
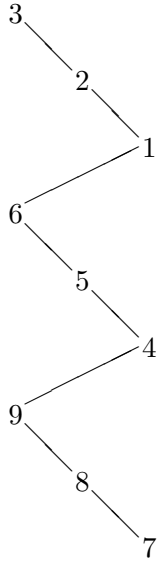
\begin{figure}[ht]
\centering
\setlength{\unitlength}{.35in}
\begin{picture}(2,8)
\put(1.85,.15){\line(-1,1){.7}}
\put(.85,1.15){\line(-1,1){.7}}
\put(1.85,3.15){\line(-1,1){.7}}
\put(.85,4.15){\line(-1,1){.7}}
\put(1.85,6.15){\line(-1,1){.7}}
\put(.85,7.15){\line(-1,1){.7}}
\put(0.15,2.15){\line(2,1){1.7}}
\put(0.15,5.15){\line(2,1){1.7}}
\put(1.9,-.15){$7$}
\put(.9,.85){$8$}
\put(-.1,1.85){$9$}
\put(1.9,2.85){$4$}
\put(.9,3.85){$5$}
\put(-.1,4.85){$6$}
\put(1.9,5.85){$1$}
\put(.9,6.85){$2$}
\put(-.1,7.85){$3$}
\end{picture}
\caption{The modified poset related to $EN_{s,t}(213)$ when $s = t = 3$.}
\label{fig:213saw}
\end{figure}
for the Hasse diagram of this new poset when $s = t = 3$.
This method of exchanging additional covering relations for pattern avoidance restrictions will prove useful later.

\begin{theorem}
\label{thm:EN231312}
For all $s \ge 2$ and all $t \ge 2$,
\begin{enumerate}[{\upshape (i)}]
\item{$|EN_{1,t}(231)| = 1$;}
\item{$|EN_{s,1}(231)| = 1$;}
\item{$|EN_{s,t}(231)|= 0$.}
\end{enumerate}
\end{theorem}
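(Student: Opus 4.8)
The plan is to treat separately the two degenerate cases (i) and (ii), where one side of the rectangle has length one, and the generic case (iii), where both sides have length at least two.

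For (i) and (ii) I would note that the posets $EN_{1,t}$ and $EN_{s,1}$ are chains: when $s=1$ the diagram is a single tooth, and when $t=1$ it is a single spine, so in either case the Hasse diagram is a path. A chain has exactly one linear extension, so each of these posets has a unique linear extension, and it only remains to check that this permutation avoids $231$. The unique linear extension of $EN_{1,t}$ is the increasing word $1\,2\,\cdots\,t$ and that of $EN_{s,1}$ is the decreasing word $s\,(s-1)\,\cdots\,1$; since a monotone word has no occurrence of $231$, both counts equal $1$.

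For (iii) the key idea is to produce a single three-element chain whose poset order and whose integer order disagree in exactly the way that spells out $231$; because a chain occurs in the same relative order in \emph{every} linear extension, this one chain forces a $231$ everywhere at once, and I never have to inspect linear extensions one at a time. Since $s,t\ge 2$, the diagram contains the elements $a_1=t(s-1)+1$, $a_2=t(s-1)+2$, and $a_3=t(s-2)+2$. The first two are consecutive entries of the last tooth ($a_1$ being the lower-right element, i.e.\ the minimum of the poset), so $a_1\Uparrow a_2$, while $a_2$ and $a_3$ sit one above the other in a common spine with $a_2=a_3+t$, so $a_2\Uparrow a_3$; hence $a_1\Uparrow a_2\Uparrow a_3$. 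Consequently $a_1,a_2,a_3$ appear in this order in every linear extension of $EN_{s,t}$, while their values satisfy $a_3=t(s-2)+2<t(s-1)+1=a_1<t(s-1)+2=a_2$ because $t\ge 2$. Thus the subword $a_1\,a_2\,a_3$ has relative order $231$, so every linear extension contains $231$ and $|EN_{s,t}(231)|=0$.

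The only point requiring care is verifying that the chain is genuinely forced, i.e.\ pinning down the two covering relations correctly. I would read these off from the description of $EN_{s,t}$ recorded just after Theorem~\ref{thm:EN132213}: within a tooth the poset order agrees with the integer order (giving $a_1\Uparrow a_2$), whereas within a spine, whose entries differ by $t$, the poset order reverses the integer order (giving $a_2\Uparrow a_3$). It is exactly this clash between the two directions that manufactures the $231$. Because the argument invokes only the relations $a_1\Uparrow a_2\Uparrow a_3$, and never a covering or an incomparability, it is insensitive to how the remainder of the poset is linearized, which is what makes the ``one bad chain'' strategy succeed.
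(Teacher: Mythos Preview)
Your proof is correct and follows essentially the same approach as the paper's. Both arguments hinge on the observation that, once $s,t\ge 2$, the poset forces two elements of a lower tooth to precede an element of the tooth above them, manufacturing an unavoidable $231$; you simply make this explicit by naming the chain $a_1\Uparrow a_2\Uparrow a_3$ with $a_1=(s-1)t+1$, $a_2=(s-1)t+2$, $a_3=(s-2)t+2$, whereas the paper states the general principle and leaves the reader to locate such a chain.
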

\begin{proof}
Notice that if any two elements of the $i$th tooth precede any element in the $(i-1)$th tooth in a linear extension $\pi$, then $\pi$ will contain $231$. 
Thus, it is not possible for any linear extension of a poset with more than one spine or more than one tooth to avoid $213$. 
Therefore, only the linear extensions $1, 2, ..., t-1, t$ of $EN_{1,t}$ and $s, s-1, s-2, ..., 2, 1$ of $EN_{s,1}$ avoid $213$. 
\end{proof}
 
\begin{theorem}
For all $s \ge 3$ and all $t \ge 1$,
\label{thm:EN, 321}
\begin{enumerate}[{\upshape (i)}] 
\item{$|EN_{1,t}(321)| = 1$;} 
\item{$|EN_{2,t}(321)|=C_t$;} 
\item{$|EN_{s,t}(321)| = 0$.}
\end{enumerate}
\end{theorem}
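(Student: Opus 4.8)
The plan is to reduce all three parts to a single structural fact: the longest decreasing subsequence of any linear extension of $EN_{s,t}$ has length exactly $s$. The starting observation is that $\pi \in EN_{s,t}(321)$ if and only if $\pi$ has no decreasing subsequence of length $3$. First I would record two monotonicity facts about the labels. Each tooth and each spine is a chain in $EN_{s,t}$, so its elements occur in a fixed relative order in every linear extension, namely from the minimum of the chain to its maximum. One checks directly from the labeling (the vertex sitting in row $i$ and column $j$ of the associated tableau of shape $t^s$ carries the label $(s-i)t+j$, with row $1$ at the top) that the labels \emph{increase} along each tooth as we pass from its minimum to its maximum, and \emph{decrease} along each spine. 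Hence, viewed as subsequences of $\pi$, the $t$ elements of any single tooth appear in increasing order of their labels, while the $s$ elements of any single spine appear in decreasing order of their labels.

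From these two facts the key estimate is immediate. Since the labels increase along each tooth, any two elements of a decreasing subsequence must lie in different teeth; as there are exactly $s$ teeth, every decreasing subsequence has length at most $s$. Conversely, the $s$ elements of any one spine already form a decreasing subsequence of length $s$. Therefore the longest decreasing subsequence of every linear extension of $EN_{s,t}$ has length exactly $s$. This yields (iii) at once: when $s \ge 3$ each linear extension contains a decreasing subsequence of length $s \ge 3$, hence a $321$ pattern, so $EN_{s,t}(321)$ is empty and $|EN_{s,t}(321)| = 0$.

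For the remaining parts I would run the estimate in the other direction. When $s \le 2$ the longest decreasing subsequence has length at most $2 < 3$, so \emph{every} linear extension of $EN_{s,t}$ avoids $321$; thus $|EN_{s,t}(321)|$ equals the total number of linear extensions of $EN_{s,t}$. For $s = 1$ the poset $EN_{1,t}$ is a single chain with exactly one linear extension, giving (i); for $s = 2$ the total count is $C_t$ by Corollary \ref{cor:linextCat}, giving (ii). The genuine content of the argument is the structural observation that teeth are increasing and spines decreasing, which turns the whole problem into a statement about the longest decreasing subsequence; once that is in place the three parts follow immediately. The only steps requiring care are matching the label directions correctly, so that teeth come out increasing and spines decreasing rather than the reverse, and remembering to invoke the total count from Corollary \ref{cor:linextCat} in the $s = 2$ case.
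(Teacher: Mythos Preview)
Your argument is correct and is essentially the paper's proof, organized slightly more uniformly. The paper proves (iii) by exhibiting the specific decreasing subsequence $2t+1,\ t+1,\ 1$ (the first three entries of the $t$th spine), proves (ii) by observing that each linear extension of $EN_{2,t}$ is a union of two increasing subsequences (the two teeth) and then invoking Corollary~\ref{cor:linextCat}, and handles (i) directly. Your packaging of these same two structural facts---teeth appear increasingly, spines appear decreasingly---into the single statement ``the longest decreasing subsequence has length exactly $s$'' is a clean unification, but the underlying ideas are identical.
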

\begin{proof}
When $s=1$, the poset has one linear extension, namely $1, 2, ..., t-1, t$, and this linear extension avoids 321. 

When $s=2$, every linear extension in $EN_{2,t}(321)$ is a union of two increasing subsequences of length $t$, corresponding to the teeth of $EN_{2,t}$. 
Each such permutation avoids 321, and the result follows from Corollary \ref{cor:linextCat}. 

When $s\geq 3$, every linear extension of $EN_{s,t}$ contains the subsequence $2t+1, t+1, 1$, so none of these linear extensions avoid 321.  
\end{proof}
  
\begin{theorem}
\label{thm:EN, 123}
For all $s \ge 1$ and all $t \ge 3$,
\begin{enumerate} [{\upshape (i)}] 
\item{$|EN_{s,1}(123)|=1$;} 
\item{$|EN_{s,2}(123)|=C_s$;}
\item{$|EN_{s,t}(123)|=0$.}
\end{enumerate}
\end{theorem}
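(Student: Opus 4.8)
The plan is to treat the three cases in parallel, exactly as in the companion theorem for the pattern $321$, by exploiting the chain structure of the teeth and spines of $EN_{s,t}$. First I would record a coordinate description of the poset: index each element by its tooth $a \in \{1,\ldots,s\}$ (numbered from the upper right) and its spine $b \in \{1,\ldots,t\}$ (numbered from the upper left), so that the element in tooth $a$ and spine $b$ carries the label $at-b+1$, with covering relations $(a,b) \Uparrow (a-1,b)$ along spines and $(a,b) \Uparrow (a,b-1)$ along teeth. This is a direct reading of the Hasse diagram of $EN_{s,t}$. Two consequences drive the whole proof: the first (East-to-North) tooth consists of the elements labeled $1,2,\ldots,t$ and forms a chain $1 \Uparrow 2 \Uparrow \cdots \Uparrow t$, and each spine is a chain whose labels, read in increasing poset order, are \emph{decreasing} in value.

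Parts (i) and (iii) then follow immediately. When $t=1$ the poset $EN_{s,1}$ is the single chain $s \Uparrow s-1 \Uparrow \cdots \Uparrow 1$, so its unique linear extension is the decreasing word $s,\,s-1,\ldots,1$, which avoids $123$; this gives (i). When $t \ge 3$, the labels $1,2,\ldots,t$ of the first tooth form a chain, so in every linear extension they must occur in the relative order $1,2,\ldots,t$; since these values are themselves increasing, every linear extension contains the increasing subsequence $1,2,3$ and hence contains $123$. Thus $EN_{s,t}(123) = \emptyset$, proving (iii).

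For part (ii), where $t=2$, I would argue that every linear extension of $EN_{s,2}$ avoids $123$, so that $|EN_{s,2}(123)|$ equals the total number of linear extensions of $EN_{s,2}$, which is $C_s$ by Corollary \ref{cor:linextCat}. The poset $EN_{s,2}$ has exactly two spines, and these partition its elements. Each spine is a chain, so its elements appear in a forced relative order in any linear extension, and by the structural fact above this forced order is decreasing in value. Hence every linear extension of $EN_{s,2}$ is a shuffle of two decreasing subsequences, its longest increasing subsequence has length at most two, and it cannot contain $123$.

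I expect the only genuine content to be the observation in part (ii) that the two spines furnish a covering of each linear extension by two decreasing subsequences; once that is in hand, the bound on the longest increasing subsequence and the appeal to Corollary \ref{cor:linextCat} are routine, and parts (i) and (iii) are immediate from the tooth chain. The remaining work is purely bookkeeping, namely confirming the coordinate labeling $at-b+1$ and the directions of the two families of covering relations from the Hasse diagram.
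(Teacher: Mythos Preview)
Your proof is correct and is exactly the argument the paper intends: it carries out the ``similar'' proof alluded to, dualizing Theorem~\ref{thm:EN, 321} by swapping the roles of spines and teeth (spines furnish the two decreasing subsequences in part~(ii), and the first tooth supplies the forced $1,2,3$ in part~(iii)). The coordinate bookkeeping you set up is accurate and the appeal to Corollary~\ref{cor:linextCat} matches the paper's own reasoning.
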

\begin{proof}
The proof is similar to that of Theorem \ref{thm:EN, 321}.
\end{proof}    

In view of Proposition \ref{prop:rc}, we have now found $|EN_{s,t}(\sigma)|$ for all $\sigma \in S_3$, and by combining our results we can easily find the number of linear extensions of $EN_{s,t}$ which avoid any list of patterns in $S_3$.
With this done, we turn our attention to linear extensions of $NE_{s,t}$ which avoid various patterns of length three.

\begin{theorem}
\label{thm:NE(213)=NE(132)}
For all $s \ge 1$ and all $t \ge 1$,
\begin{equation}
|NE_{s,t}(213)| = t^{s-1}.
\end{equation}
\end{theorem}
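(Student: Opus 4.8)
The plan is to encode each linear extension of $NE_{s,t}$ by its sequence of teeth and thereby reduce the count to a statement about words. First I would record the poset structure: writing each element as a pair (tooth, spine) with tooth index $a\in\{1,\dots,s\}$ and spine index $b\in\{1,\dots,t\}$, the element at $(a,b)$ carries the label $(a-1)t+(t+1-b)$, and the order is $(a,b)\preceq(a',b')$ exactly when $a\ge a'$ and $b\le b'$. Two consequences drive everything. Within one tooth the spine index must increase along a linear extension, so the tooth-$a$ elements appear in the strictly decreasing label order $at,at-1,\dots,(a-1)t+1$. Between teeth the relation forces, for $a>a'$, that the $m$th element of tooth $a$ precede the $m$th element of tooth $a'$; equivalently, in every prefix of the extension the number of elements used from tooth $s$ is at least the number from tooth $s-1$, which is at least the number from tooth $s-2$, and so on down to tooth $1$. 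Thus, letting $w=w_1\cdots w_{st}$ record the tooth index of the $k$th element, the linear extensions of $NE_{s,t}$ are in bijection with the ballot words $w$ of content $t^s$ (each value of $[s]$ used $t$ times, every prefix satisfying $\#s\ge\#(s-1)\ge\cdots\ge\#1$).

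Next I would translate $213$-avoidance into a condition on $w$. A short case analysis on the teeth of a putative occurrence shows that $\pi$ contains $213$ at positions $p_1<p_2<p_3$ if and only if $w_{p_2}\le w_{p_1}<w_{p_3}$ at those very positions. Indeed, if the three teeth are distinct this is the pattern $213$ inside $w$, while if the first two teeth coincide it is ``two equal letters followed by a larger letter''; in either case the block ordering of labels across teeth, together with the forced decreasing order inside a tooth, makes the value inequalities $v_2<v_1<v_3$ equivalent to $w_{p_2}\le w_{p_1}<w_{p_3}$. Hence $NE_{s,t}(213)$ corresponds to those ballot words of content $t^s$ that avoid the configuration $w_{p_2}\le w_{p_1}<w_{p_3}$, and it suffices to count these.

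Finally I would count by induction on $s$, deleting the largest tooth. Removing from $w$ every copy of the maximal letter $s$ (equivalently, deleting the $t$ largest labels, which constitute tooth $s$) sends a valid word to a valid word of content $t^{s-1}$ on $\{1,\dots,s-1\}$, with both the ballot condition and the avoidance condition inherited. The crux is to show that each valid word $u$ on $\{1,\dots,s-1\}$ has exactly $t$ preimages. Avoidance forces the non-$s$ letters lying before the last $s$ to be strictly increasing; but a ballot word begins with its largest letter, so $u$ begins with $s-1$, and therefore at most one non-$s$ letter can precede the last $s$. Writing $u=u_1u_2\cdots$, the only possibilities are $w^{(j)}=s^{\,j}\,u_1\,s^{\,t-j}\,u_2u_3\cdots$ for $j=1,\dots,t$, each of which is readily checked to be ballot and configuration-free. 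This yields $|NE_{s,t}(213)|=t\,|NE_{s-1,t}(213)|$, and since $|NE_{1,t}(213)|=1$ (a single tooth has one, decreasing, linear extension), we conclude $|NE_{s,t}(213)|=t^{s-1}$.

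I expect the translation step and, above all, the counting of preimages to be the main obstacles. The idea that unlocks the preimage count is the innocuous observation that a ballot word must open with its maximum letter, which caps at one the number of smaller letters preceding the final copy of $s$; iterating the construction $w^{(j)}$ also produces an explicit bijection between $NE_{s,t}(213)$ and $\{1,\dots,t\}^{s-1}$, recording the chosen $j$ at each of the $s-1$ stages.
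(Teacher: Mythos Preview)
Your proof is correct and shares its skeleton with the paper's: both argue by induction on $s$, peeling off (or, in the paper, inserting) the tooth carrying the $t$ largest labels and showing there are exactly $t$ ways to do so, because those elements must all sit either before or immediately after the first entry of the smaller linear extension. The paper works directly with the permutations and is rather terse about why exactly $t$ insertions work and why the results avoid $213$; you instead pass through the standard ballot-word encoding of linear extensions and prove the clean lemma that $213$-avoidance in $\pi$ is equivalent to the tooth-word $w$ having no positions $p_1<p_2<p_3$ with $w_{p_2}\le w_{p_1}<w_{p_3}$. This extra layer makes the verification more transparent---for instance, your observation that a ballot word begins with its maximum letter immediately explains why at most one non-$s$ letter can precede the final $s$---at the cost of a longer setup. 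The two arguments are the same induction viewed from opposite ends, and your explicit list $w^{(j)}=s^{\,j}u_1s^{\,t-j}u_2u_3\cdots$ matches the paper's description of the $t$ admissible insertions.
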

\begin{proof}
We argue by induction on $s$.
          
When $s=1$, the poset only has one linear extension, namely $st, st-1, st-2, \ldots, 2,1$, and this linear extension avoids 213. 
Thus $NE_{1,t}(213)=t^0=1$.
          
Now we will show that $|NE_{s+1,t}(213)| = t \cdot t^{s-1}$ by mapping every linear extension in $NE_{s,t}(213)$ to $t$ linear extensions in $NE_{s+1,t}(213)$. Let $S=a_{st},\dots,a_1$ be a linear extension in $NE_{s,t}(213)$. 
We can map $S$ to the linear extension in $NE_{s+1,t}(213)$ that we obtain by inserting $(s+1)t,\dots,st+1$ into $S$ (in this order) so that the resulting linear extension still avoids 213. We must add $(s+1)t$ before $a_{st}$ (because it is now the largest element in the poset). 
To ensure that we map to a linear extension that avoids 213, we must insert the rest of the numbers $b \in \{(s+1)t-1,\dots,st+1\}$ into two places: directly between $(s+1)t$ and $a_{st}$, or immediately after $a_{st}$. 
Otherwise, the subsequence $a_{st}, k, b$ (where $k\in S$ and $k < a_{st}$) creates a 213 pattern. 
Since $(s+1)t-1,\dots,st+1$ must appear greatest to least, there are exactly $t$ ways of inserting them into the two possible locations. 
\end{proof}

\begin{example}
We can insert $9$, $8$, and $7$ into $635421$, which is an element of $NE_{2,3}(213)$, in three ways to obtain linear extensions in $NE_{3,3}(213)$.
The resulting linear extensions are $\underline{987} 635421$, $\underline{98} 6 \underline{7}35421$, and $\underline{9} 6 \underline{87}35421$.
Here we have underlined the newly inserted entries.
\end{example}

Later, we will revisit this technique of constructing linear extensions of a larger poset by inserting elements into linear extensions of a smaller poset. 
However, as an immediate consequence of the form linear extensions avoiding 213 must take, as described in the above proof, we can enumerate the linear extensions of $NE_{s,t}$ which avoid particular sets of two patterns. 

\begin{corollary}
\label{cor:NE(123,213)=NE(123,132)}
For all $s \ge 1$ and all $t \ge 1$, 
$$|NE_{s,t}(213,123)| = t^{s-1}.$$
\end{corollary}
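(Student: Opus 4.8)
The plan is to show that every linear extension of $NE_{s,t}$ that avoids 213 automatically avoids 123; once this is established we have $NE_{s,t}(213,123) = NE_{s,t}(213)$, and the count $t^{s-1}$ follows immediately from Theorem \ref{thm:NE(213)=NE(132)}. Since the inclusion $NE_{s,t}(213,123) \subseteq NE_{s,t}(213)$ is automatic, the entire content is the implication ``avoids 213 $\Rightarrow$ avoids 123,'' and I would prove this by induction on $s$, piggybacking on the recursive construction from the proof of Theorem \ref{thm:NE(213)=NE(132)}.

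For the base case $s = 1$, the unique linear extension of $NE_{1,t}$ is the decreasing word $t, t-1, \ldots, 1$, which has no increasing subsequence of length two, let alone three, so it avoids 123. For the inductive step I would take an arbitrary $\pi' \in NE_{s+1,t}(213)$ and use the fact (established in the proof of Theorem \ref{thm:NE(213)=NE(132)}) that $\pi'$ is obtained from some $\pi \in NE_{s,t}(213)$ by inserting the new, largest labels $(s+1)t, (s+1)t-1, \ldots, st+1$: the element $(s+1)t$ goes at the front, and the remaining new labels, in decreasing order, are distributed between the slot immediately after $(s+1)t$ and the slot immediately after the old first entry $a_{st}$. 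The crucial observation I would isolate as a lemma is that, read in order of appearance, the newly inserted labels form a strictly decreasing sequence and every one of them exceeds every old label.

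With that lemma in hand, suppose toward a contradiction that $\pi'$ contains an increasing subsequence of length three. It can use at most one new label, since two would contradict the decreasing-run lemma. If it uses none, then the same three old labels form an increasing triple in $\pi$ (the old labels keep their relative order under insertion), contradicting the inductive hypothesis. If it uses exactly one new label, that label has the largest of the three values, so it must be the last term, and hence two old labels precede it in increasing order. But by the construction every new label is preceded by at most one old label in $\pi'$ --- namely $a_{st}$, and only for the labels inserted after $a_{st}$, while $(s+1)t$ and all labels inserted before $a_{st}$ are preceded by no old label at all. So no new label can be preceded by two old labels, and no such triple exists. This completes the induction.

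The main obstacle, and the step I would want to write most carefully, is the bookkeeping in the inductive step: I must be sure that the proof of Theorem \ref{thm:NE(213)=NE(132)} really exhibits a bijection onto $NE_{s+1,t}(213)$, so that every 213-avoiding extension is captured as some insertion and not merely that insertions produce 213-avoiders, and I must track exactly which positions an inserted label can occupy in order to justify the claim that at most one old label precedes any new label. Everything else --- the reduction to the single implication and the base case --- is routine.
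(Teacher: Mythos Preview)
Your proposal is correct and follows essentially the same approach as the paper: both argue by induction on $s$, using the insertion structure from Theorem~\ref{thm:NE(213)=NE(132)} to show that every $\pi'\in NE_{s+1,t}(213)$ avoids $123$, with the key structural fact being that the new labels form a decreasing block interrupted only by the single old entry $a_{st}$. Your case split according to how many new labels appear in a putative $123$ pattern is arguably a bit crisper than the paper's phrasing, but the underlying idea is identical, and your closing caveat about needing the map in Theorem~\ref{thm:NE(213)=NE(132)} to be surjective onto $NE_{s+1,t}(213)$ is exactly the right point to flag.
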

\begin{proof}
We claim that every linear extension in $NE_{s,t}(213)$ is also in $NE_{s,t}(123)$, arguing by induction on $s$. The base case is clear from the previous theorem, since the single linear extension of $NE_{1,t}$ is strictly decreasing and therefore avoids both 213 and 123. 

According to the algorithm described in the proof of Theorem \ref{thm:NE(213)=NE(132)} for inserting elements into a linear extension $a_{st}, a_{st-1}, ..., a_{2}, a_{1}$ to construct a linear extension in $NE_{s+1,t}(213)$, all linear extensions in $NE_{s+1,t}(213)$ must begin with a strictly decreasing sequence of the first $i$ ($1 \leq i \leq t$) elements from $(s+1)t, ..., st+1$, followed by $a_{st}$. Next, these linear extensions contain another strictly decreasing sequence of the remaining $t-i$ elements from $(s+1)t, ..., st+1$. Finally, each of these extended linear extensions ends with the original linear extension of the remaining $st-1$ elements of $NE_{s,t}$, which we now assume avoids both 123 and 213 as our inductive hypothesis. If a 123 pattern were to appear in the arrangement of these two sequences inserted around $a_{st}$, then it must contain $a_{st}$, since the inserted elements are in strictly decreasing order and therefore do not contain a 123 pattern. Moreover, $a_{st}$ would have to act as a ``1", since it is smaller than the other inserted elements.
However, the strictly decreasing subsequence of elements we inserted after $a_{st}$ are all greater than the final $st-1$ elements of the linear extension, so we cannot complete the 123 pattern with any increasing subsequence of length two. 
Hence, $NE_{s,t}(213,123)=NE_{s,t}(213)$.
\end{proof}

\begin{corollary}
\label{cor:NE(132,213)} For all $s \ge 1$ and all $t \ge 2$,
\begin{enumerate}[{\upshape (i)}]
\item{$|NE_{s,1}(213,132)|=1$;}
\item{$|NE_{s,t}(213,132)| = 2^{s-1}$.}
\end{enumerate}
\end{corollary}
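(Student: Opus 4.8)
The plan is to argue by induction on $s$, leveraging the insertion procedure from the proof of Theorem~\ref{thm:NE(213)=NE(132)} and tracking exactly which insertions preserve $132$-avoidance. Part~(i), the case $t=1$, is immediate: the poset $NE_{s,1}$ has a single spine and no nontrivial teeth, so it is a chain, has exactly one linear extension, and that extension avoids every pattern.

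For part~(ii) I fix $t \ge 2$. Recall from Theorem~\ref{thm:NE(213)=NE(132)} that each element of $NE_{s+1,t}(213)$ arises from a unique $\pi = a_{st}, a_{st-1}, \ldots, a_1 \in NE_{s,t}(213)$ and a unique $i \in \{1,\ldots,t\}$ via
$$\phi_i(\pi) = \underbrace{(s+1)t, \ldots, (s+1)t-i+1}_{A},\ a_{st},\ \underbrace{(s+1)t-i, \ldots, st+1}_{C},\ a_{st-1}, \ldots, a_1,$$
where $A$ and $C$ are strictly decreasing blocks of the newly inserted (largest) values, and where $a_{st}=st$ is the unique minimal element of $NE_{s,t}$ and hence the largest value occurring in $\pi$. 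The crux of the argument is the claim that $\phi_i(\pi)$ avoids $132$ if and only if $\pi$ avoids $132$ and $i \in \{t-1,t\}$.

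To prove the claim I would analyze where the ``$1$'' of a hypothetical $132$ pattern in $\phi_i(\pi)$ can sit, using that every entry of $A$ exceeds every entry of $C$, that every entry of $C$ exceeds $st$, and that $st$ exceeds every entry of the tail $a_{st-1}, \ldots, a_1$. If the ``$1$'' lies in $A$ or in $C$, then every entry to its right is strictly smaller, since its block is decreasing and all later blocks consist of smaller values, so it cannot be the small element of a $132$. If the ``$1$'' lies in the tail, then so do the ``$3$'' and the ``$2$'', forcing the whole pattern into the tail, which avoids $132$ by the inductive hypothesis. The only remaining possibility is ``$1$''$\,= a_{st}=st$, whose ``$3$'' and ``$2$'' must both exceed $st$ and appear after it, hence must lie in $C$; since $C$ is strictly decreasing, any two of its entries appear in decreasing order and, together with the preceding $st$, form a $132$ pattern, so such a pair exists precisely when $|C| = t-i \ge 2$. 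Conversely, any $132$ already present in $\pi$ must lie in its tail, because the leading entry $a_{st}=st$ is the maximum value of $\pi$ and cannot participate in a $132$; as the tail is copied verbatim into $\phi_i(\pi)$, this yields both the ``only if'' direction and the reduction to $\pi$ itself avoiding $132$.

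Granting the claim, the map $(\pi,i) \mapsto \phi_i(\pi)$ restricts to a bijection from $NE_{s,t}(213,132) \times \{t-1,t\}$ onto $NE_{s+1,t}(213,132)$, so $|NE_{s+1,t}(213,132)| = 2\,|NE_{s,t}(213,132)|$; here $t \ge 2$ is exactly the hypothesis that makes both choices $i=t-1$ and $i=t$ legal (when $t=1$ only $i=1$ survives, which is consistent with the chain in part~(i)). Combined with the base case $|NE_{1,t}(213,132)| = 1$, given by the single decreasing extension, induction then gives $2^{s-1}$. I expect the case analysis on the location of the ``$1$'' to be the only real obstacle, as it is the step where one must rule out $132$ patterns straddling the inserted blocks and the tail; the value inequalities $A > C > st > \text{tail}$ make each case collapse quickly, so I anticipate nothing beyond careful bookkeeping.
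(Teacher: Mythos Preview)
Your proof is correct and follows essentially the same approach as the paper's: both use the insertion procedure from Theorem~\ref{thm:NE(213)=NE(132)} and observe that avoiding $132$ forces the block $C$ inserted after $a_{st}$ to have at most one element, leaving exactly two choices per step. Your version is simply more thorough, carrying out the full case analysis on the location of the ``$1$'' of a potential $132$ that the paper leaves implicit.
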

\begin{proof}
We again use the insertion algorithm from the previous corollary. To avoid 132 as we insert our $t$ largest elements, we can place at most one element after $a_{st}$ and before the rest of the old linear extension of $NE_{s,t}$. Otherwise, $a_{st}, st+2, st+1$ form a 132 pattern. 
Hence, there are exactly two ways to build a linear extension in $NE_{s+1,t}(132,213)$ from a linear extension in $NE_{s,t}(132,213)$, and the result follows by induction. 
\end{proof}

\begin{theorem}
\label{thm:NE, 231&312}
For all $s \ge 1$ and all $t \ge 1$ we have $|NE_{s,t}(312)| = 1$.
\end{theorem}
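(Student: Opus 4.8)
The plan is to show that the unique member of $NE_{s,t}(312)$ is the strictly decreasing permutation $st,\ st-1,\ \ldots,\ 2,\ 1$. First I would verify that this permutation really is a linear extension of $NE_{s,t}$. The key structural fact is that in the labeling of $NE_{s,t}$ the labels strictly decrease as one moves up the Hasse diagram; concretely, writing $\ell(x)$ for the label of a vertex $x$, one has $x \Uparrow y \Rightarrow \ell(x) > \ell(y)$. (For instance, in Figure \ref{fig:03rect} the minimum $12$ carries the largest label and every covering step decreases the label.) Granting this, listing the labels in decreasing order is consistent with $\Uparrow$, so $st,\ st-1,\ \ldots,\ 1$ is a genuine linear extension, and being monotone it avoids every nonmonotone pattern, in particular $312$. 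This settles existence.

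For uniqueness I would show that no other linear extension can avoid $312$. The two facts I would extract from the poset structure are: (a) $NE_{s,t}$ has a unique minimal element, whose label is $st$, so every linear extension $\pi$ begins with $\pi(1) = st$; and (b) $st$ is the largest label, so $\pi(1) \ge \pi(k)$ for all $k$, with equality only at $k = 1$. Now suppose $\pi \in NE_{s,t}$ is not strictly decreasing. Then $\pi$ has an adjacent ascent; let $i$ be least with $\pi(i) < \pi(i+1)$. Since $\pi(1) = st$ is maximal, no ascent can occur at $i = 1$, so $i \ge 2$ and the positions $1 < i < i+1$ are distinct. Their values satisfy $\pi(1) = st > \pi(i+1) > \pi(i)$, so $\pi(1),\ \pi(i),\ \pi(i+1)$ form a $312$ pattern. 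Hence every $312$-avoiding linear extension is strictly decreasing, and there is exactly one such arrangement, which finishes the count.

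The main obstacle is purely bookkeeping: pinning down the coordinates and labels of $NE_{s,t}$ precisely enough to justify both the monotonicity $x \Uparrow y \Rightarrow \ell(x) > \ell(y)$ and the identification of $st$ as simultaneously the unique minimum and the largest label. Indexing each vertex by its spine $a$ and tooth $b$, so that its label is $(b-1)t + a$ and each covering relation lowers one of the two indices, reduces everything to an elementary sign analysis of $(b-b')t + (a-a')$ together with the bound $|a - a'| \le t-1$. Once that is recorded, the pattern argument above is immediate. Finally, since the reverse complement of $312$ is $231$, Proposition \ref{prop:rc} yields $|NE_{s,t}(231)| = |NE_{s,t}(312)| = 1$ at no extra cost.
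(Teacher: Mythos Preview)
Your proof is correct and follows essentially the same approach as the paper: both arguments rest on the observation that every linear extension of $NE_{s,t}$ begins with $st$, which is simultaneously the unique minimum of the poset and the largest integer, so any subsequent ascent produces a $312$ pattern and the strictly decreasing permutation is the only survivor. The paper's proof is simply the terse version of yours, omitting the explicit verification that $st,\,st-1,\,\ldots,\,1$ is a linear extension and the coordinate bookkeeping you supply.
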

\begin{proof}
Every linear extension of $NE_{s,t}$ begins with its largest element, $st$, so a linear extension can avoid 312 if and only if it is $st,st-1,\dots, 1$. 
This is, in fact, a linear extension of $NE_{s,t}$, so the result follows. 
\end{proof}

Combining the results in this section with Proposition \ref{prop:rc} allows us to find $|NE_{s,t}(\sigma_1,\ldots)|$ for any list of forbidden patterns of length three, with one notable exception:  so far we have said nothing about $|NE_{s,t}(123)|$.
In Table \ref{table:NEst123data}
\begin{table}[ht]
\centering
\begin{tabular}{c|cccccc}
$s \setminus t$ & 1 & 2 & 3 & 4 & 5 & 6 \\
\hline
1 & 1 & 1 & 1 & 1 & 1 & 1 \\
2 & 1 & 2 & 5 & 14 & 42 & 132 \\
3 & 1 & 5 & 33 & 234 & 1706 & 12618 \\
4 & 1 & 14 & 238 & 4146 & 72152 & 1246804 \\
5 & 1 & 42 & 1782 & 75187 & 3099106 \\
6 & 1 & 132 & 13593 & 1378668 \\
7 & 1 & 429 & 104756 & 25430445
\end{tabular}
\caption{$|NE_{s,t}(123)|$ for small $s$ and $t$.}
\label{table:NEst123data}
\end{table}
we have the values of $|NE_{s,t}(123)|$ for various small $s$ and $t$.
We leave it as an exercise for the reader to show that for all $s \ge 1$ and all $t \ge 1$ we have $|NE_{s,1}(123)| = |NE_{1,t}(123)| = 1$, $|NE_{s,2}(123) |= C_s$, and $|NE_{2,t}(123)| = C_t$.
It is an open problem to find $|NE_{s,t}(123)|$ for $s \ge 3$ and $t \ge 3$.

\section{$EN(1243)$, Generating Trees, and Fuss-Catalan Paths}
\label{sec:gentrees}

We now turn our attention to linear extensions of $EN_{s,t}$ avoiding 1243. 
In our proofs of Theorems \ref{thm:EN132213} and \ref{thm:EN231312}, we saw that the linear extensions of $EN_{s,t}$ which avoid 213 are the linear extensions of a poset we obtain by adding several new covering relations to $EN_{s,t}$, as are the linear extensions of $EN_{s,t}$ which avoid 231. 
As a first step in enumerating $EN_{s,t}(1243)$, we show that a similar result holds in this case.
However, our new posets, which we call {\em sawblade posets}, are somewhat more complicated.

\begin{definition}
A {\em sawblade poset}, denoted $SAW_{s,t}$, is the poset we obtain from $EN_{s,t}$ by adding covering relations such that $(j-1)t+2$ covers $(j+1)t$ for all $j$ with $1 \le j \le s-1$.
\end{definition}

In Figure \ref{fig:firstsaw43} 
\begin{figure}[ht]
\centering
\setlength{\unitlength}{.35in}
\begin{picture}(5,5)
\multiput(2.15,.15)(1,1){3}{\line(1,1){.7}}
\multiput(1.85,.15)(1,1){4}{\line(-1,1){.7}}
\multiput(.85,1.15)(1,1){4}{\line(-1,1){.7}}
\multiput(0.15,2)(1,1){3}{\line(1,0){1.7}}
\put(4.9,2.9){$1$}
\put(3.9,3.9){$2$}
\put(2.9,4.9){$3$}
\put(3.9,1.9){$4$}
\put(2.9,2.9){$5$}
\put(1.9,3.9){$6$}
\put(2.9,.9){$7$}
\put(1.9,1.9){$8$}
\put(.9,2.9){$9$}
\put(1.85,-.15){$10$}
\put(.85,.85){$11$}
\put(-.2,1.85){$12$}
\end{picture}
\caption{The sawblade poset $SAW_{4,3}$.}
\label{fig:firstsaw43}
\end{figure}
we have the Hasse diagram for $SAW_{4,3}$, drawn to suggest the teeth of a saw.
In Figure \ref{fig:SAW43}
\begin{figure}[ht]
\centering
\setlength{\unitlength}{.35in}
\begin{picture}(5,8)
\multiput(2.15,.15)(1,1){3}{\line(1,1){.7}}
\multiput(0.15,2.15)(0,2){3}{\line(1,1){.7}}
\put(1.85,.15){\line(-1,1){.7}}
\multiput(.85,1.15)(0,2){4}{\line(-1,1){.7}}
\put(2.85,1.15){\line(-1,1){1.7}}
\put(3.85,2.15){\line(-1,1){2.7}}
\put(4.85,3.15){\line(-1,1){3.7}}
\put(4.88,2.88){$1$}
\put(.88,6.88){$2$}
\put(-.12,7.88){$3$}
\put(3.88,1.88){$4$}
\put(.88,4.88){$5$}
\put(-.12,5.88){$6$}
\put(2.88,.88){$7$}
\put(.88,2.88){$8$}
\put(-.12,3.88){$9$}
\put(1.8,-.12){$10$}
\put(.8,.88){$11$}
\put(-.2,1.88){$12$}
\end{picture}
\caption{The sawblade poset $SAW_{4,3}$.}
\label{fig:SAW43}
\end{figure}
we have the Hasse diagram of $SAW_{4,3}$ drawn to respect the convention that if $a$ covers $b$ then $a$ is above $b$ in the diagram.

\begin{theorem}
\label{thm:sawblade}
For all $s \ge 0$ and all $t \ge 0$, a linear extension of $EN_{s,t}$ avoids 1243 if and only if it is a linear extension of $SAW_{s,t}$.
\end{theorem}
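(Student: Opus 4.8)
The plan is to prove both implications by contraposition, after recoordinatizing through the standard-tableau bijection established above. I would identify the element of $EN_{s,t}$ with value $v=(s-r)t+c$ (where $1\le r\le s$ and $1\le c\le t$) with the cell $(r,c)$ of an $s\times t$ grid; the poset order is exactly componentwise dominance on $(r,c)$, so that in any linear extension of $EN_{s,t}$ the values in each fixed row appear in increasing order and the values in each fixed column appear in decreasing order, and moreover any element appears before everything that lies above it. In these coordinates the element $(j-1)t+2$ is the cell $(s-j+1,2)$ and $(j+1)t$ is the cell $(s-j,t)$, so writing $r=s-j$ the added relations of $SAW_{s,t}$ say precisely that the last entry $(r,t)$ of row $r$ lies below the second entry $(r+1,2)$ of row $r+1$, for $1\le r\le s-1$. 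Since a linear extension of $EN_{s,t}$ is a linear extension of $SAW_{s,t}$ exactly when it honors all of these added relations, it suffices to show that an extension violating one of them contains $1243$, and that an extension honoring all of them avoids $1243$.

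For the first implication, suppose a linear extension $\pi$ of $EN_{s,t}$ has $(j-1)t+2$ appearing before $(j+1)t$ for some $j$. I would exhibit a $1243$ pattern explicitly from the four values
\[
A=(j-1)t+1,\qquad B=(j-1)t+2,\qquad C=(j+1)t,\qquad D=jt,
\]
the first, second, and last entries of tooth $j$ together with the last entry of tooth $j+1$. Here $A$ precedes $B$ because they share a row and $A<B$; $B$ precedes $C$ by hypothesis; and $C$ precedes $D$ because $C=(s-j,t)$ lies below $D=(s-j+1,t)$ in their common column. Thus $A,B,C,D$ occur in this positional order while their values satisfy $A<B<D<C$ (using $t\ge 3$ to guarantee $B<D$), which is exactly the pattern $1243$. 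The cases $t\le 2$ are degenerate: when $t\le 1$ no relations are added, and when $t=2$ each added relation already holds in $EN_{s,t}$, so no violation can occur and there is nothing to prove.

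For the converse, suppose $\pi$ is a linear extension of $SAW_{s,t}$ that nevertheless contains $1243$, at values $w<x<z<y$ occurring in positional order $w,x,y,z$ (so $y$ plays the ``$4$'' and $z$ the ``$3$''). The first observation is that a larger value forces a weakly smaller row index, so $w<x<z<y$ yields $r_w\ge r_x\ge r_z\ge r_y$. The engine of the contradiction is two monotonicity facts: (F1) a first-column element $(r,1)$ lies below every cell of every strictly lower row, which is immediate from the $EN$ order; and (F2) a non-first-column element $(r',c')$ with $c'\ge 2$ lies above every cell of every strictly higher row, which follows from a single added relation through the chain $(r,c)\le (r,t)\le (r+1,2)\le (r',c')$. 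Now compare $r_x$ with $r_y$. If $r_x=r_y$ then $r_z=r_y$ as well, so $x,z,y$ share a row and must appear in increasing value order $x,z,y$, contradicting that $y$ precedes $z$. If $r_x>r_y$, then $c_x=1$ (otherwise F2 forces $y$ before $x$, contradicting $x$ before $y$); but then $r_w>r_x$ (equality is impossible once $c_x=1$, since $w<x$ in a common row would need a column smaller than $1$), and F1 forces $x$ before $w$, contradicting $w$ before $x$. Either way we reach a contradiction, so $\pi$ avoids $1243$.

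The conceptual crux is fact (F2): although each saw relation links only two adjacent teeth, the relations propagate through the $EN$ order so that \emph{every} non-leftmost entry dominates all higher rows, and it is exactly this global consequence that prevents the ``$2$'' of a would-be $1243$ from lying below its ``$4$''. I expect the main care to be needed in the bookkeeping among the three descriptions of each element (its integer value, its tooth/spine position, and its grid cell) and in verifying the degenerate small-$t$ and small-$s$ boundaries, rather than in any single hard estimate; once (F1), (F2), and the row-monotonicity of $w<x<z<y$ are in place, both directions are short.
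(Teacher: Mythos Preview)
Your proof is correct and follows essentially the same route as the paper's. The forward direction is identical (same four elements $A,B,C,D$, with the same handling of small $t$). For the converse, the paper also argues by contraposition: it shows the ``2'' of a $1243$ pattern cannot lie in column $1$ (your (F1) argument with $w$), that the ``2'' and ``4'' cannot share a tooth (your Case~1 argument with $z$), and then uses the tooth of the ``2'' to name a specific violated saw relation. Your organization differs only cosmetically: you package the saw relations into the global lemma (F2) and argue by contradiction inside $SAW_{s,t}$, whereas the paper stays in $EN_{s,t}$ and exhibits the violated relation directly. The two arguments even invoke different saw relations in general (yours is indexed by $r_y$, the paper's by the tooth of $b$), but the content is the same.
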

\begin{proof}
First, we will show that the property is necessary, that is, in any linear extension of $EN_{s,t}$ avoiding 1243, the entry $(j+1)t$ precedes the entry $(j-1)t+2$ for all $j$ with $1 \le j \le s-1$.
  
Suppose not, and $(j-1)t+2$ occurs before $(j+1)t$ for some particular $j$. 
Since $(j-1)t+1$ is below $(j-1)t+2$ on the same tooth, $(j-1)t+1$ precedes $(j-1)t+2$ in our linear extension. 
Similarly, since $jt$ is above $(j+1)t$ on the same spine, $(j+1)t$ precedes $jt$. 
Combining these observations, we see that $(j-1)t+1$, $(j-1)t+2$, $(j+1)t$, $jt$ is a subsequence with the same relative order as 1243.
  
To show that the given property is sufficient, suppose $abcd$ is a subsequence of a linear extension of $EN_{s,t}$ with the same relative order as 1243.
We make several observations about where $b$ and $c$ can occur in the Hasse diagram of $EN_{s,t}$.

First note that $b$ is not on the $t$th spine of $EN_{s,t}$:  if it were then all entries to its left in the linear extension would be greater than $b$, but $a$ is to the left of $b$ and less than $b$.

We now claim that $b$ and $c$ cannot be on the same tooth.
If $b$ and $c$ did occur on the same tooth, then every $z$ with $b < z < c$ would occur between $b$ and $c$ in the linear extension, because all such $z$ occur between $b$ and $c$ on their common tooth.
However, $b < d < c$ but $d$ occurs after $c$ in our linear extension.

We also observe that $c$ cannot be on a higher (that is, lower-numbered) tooth than $b$, because the elements of higher teeth are less than the elements of lower teeth, but $c > b$.

Now suppose $b$ is on the $j$th tooth.
Since $b$ is not on the $t$th spine, either $b = (j-1)t+2$ or $(j-1)t+2$ occurs before $b$.
On the other hand, $c$ is on a lower tooth than $b$, so either $c = (j+1)t$ or $(j+1)t$ occurs after $c$.
Combining these observations with the fact that $b$ precedes $c$ in our linear extension, we see that $(j-1)t+2$ precedes $(j+1)t$, as desired.
\end{proof}


Our first approach to enumerating the linear extensions of $SAW_{s,t}$ is modeled on our enumeration of $NE_{s,t}(213)$ in Theorem \ref{thm:NE(213)=NE(132)}. 
In particular, for each $s$, we look at how many linear extensions of $SAW_{s+1,t}$ we can obtain from a given linear extension of $SAW_{s,t}$ by adding a tooth. 
This number depends on the particular linear extension we choose, so we use a technical tool called a {\em generating tree} to keep track of it.

Recall from the work of West and others \cite{WestCatalan,West,TwoLabel} that a generating tree is a root with a particular label or labels together with a method for determining the children of any node from the label of that node. 
In our generating tree, each node at depth $s$ will correspond to a linear extension of $SAW_{s,t}$, and each node's children will correspond to the linear extensions of $SAW_{s+1,t}$ created by extending that linear extension. 
Therefore, our root will represent the empty linear extension of a $0 \times t$ poset. 
From here, we need a method for determining the children of a node, which will require assigning a label to each node.

  

  
\begin{definition}
For any positive integers $s$ and $t$, and any linear extension $\pi$ of $SAW_{s,t}$, the label of $\pi$, written $label(\pi)$, is the number $j$ such that 1 is in position $st-j$ of $\pi$.
\end{definition}

  
As we show next, the label of a given linear extension of $SAW_{s,t}$ determines the labels of its children.
  
\begin{theorem}
\label{thm:gen-tree-succession-rule}
For any positive integers $s$ and $t$, if $\pi$ is a linear extension of $SAW_{s,t}$ then the labels of the children of $\pi$ are
\begin{equation*}
t-1, t, t+1, \dots, label(\pi) + t-1.
\end{equation*}
\end{theorem}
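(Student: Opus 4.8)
The plan is to realize each child of $\pi$ as the result of adjoining a new tooth at the \emph{top} of the diagram. Concretely, to pass from $SAW_{s,t}$ to $SAW_{s+1,t}$ I adjoin a new first tooth carrying the labels $1,2,\ldots,t$ and relabel every old element $v$ as $v+t$; this identifies the old poset with the restriction of $SAW_{s+1,t}$ to teeth $2,\ldots,s+1$, so that $\pi$ becomes a word $\tilde\pi$ on $\{t+1,\ldots,(s+1)t\}$ and the children of $\pi$ are exactly the linear extensions of $SAW_{s+1,t}$ obtained by inserting $1,2,\ldots,t$ into $\tilde\pi$. The covering relations controlling such an insertion are three: the new tooth is a chain, so $1,2,\ldots,t$ must appear in increasing order; on the last spine, $1$ lies above the old elements $t+1,2t+1,\ldots,st+1$, whose poset-maximum is $t+1$ (the relabeled copy of the old $1$); and the single new sawblade relation, obtained from the definition of $SAW_{s+1,t}$ with $j=1$, makes $2$ cover $2t$.

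The crux of the argument, and the step I expect to require the most care, is the role of the element $2t$. Since $2t$ is the relabeled copy of $t$, which is the global maximum of $SAW_{s,t}$ and hence the last entry of $\pi$, the element $2t$ is the last entry of $\tilde\pi$. The sawblade relation then forces $2$ to follow every old element, and the chain $2\prec 3\prec\cdots\prec t$ drags $3,\ldots,t$ along with it; thus $2,3,\ldots,t$ must occupy the final $t-1$ positions of the child, in increasing order. It then remains to check that $1$ is otherwise unconstrained: I would verify that the other last-spine elements $2t+1,\ldots,st+1$ all precede $t+1$ in $\tilde\pi$ (they lie lower on that spine), and that $1$ covers nothing off the last spine, so that the only binding requirement on $1$ is that it follow $t+1$ and precede $2$. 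Every remaining covering relation of $SAW_{s+1,t}$ involving a new element should then follow by transitivity through these, so that each such insertion is genuinely a linear extension and all children are accounted for.

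With the structure pinned down, the count and the labels fall out. The element $t+1$ sits at position $st-\ell$ of $\tilde\pi$, where $\ell=label(\pi)$, leaving $\ell$ entries after it; hence there are exactly $\ell+1$ admissible gaps in which to place $1$ (immediately after $t+1$, or after any one of the $\ell$ following entries), and each choice yields a distinct child, giving $\ell+1$ children in all. If $1$ is placed just after the $m$-th of those $\ell$ entries (with $m=0$ meaning immediately after $t+1$), then the entries following $1$ are the $\ell-m$ remaining old entries together with the block $2,3,\ldots,t$, so the label of the child equals $(\ell-m)+(t-1)=\ell+t-1-m$. As $m$ runs over $0,1,\ldots,\ell$ this produces precisely the values $t-1,t,\ldots,\ell+t-1$, which is the claimed list of labels of the children.
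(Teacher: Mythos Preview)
Your argument is correct and follows the same route as the paper's: relabel $\pi$ by adding $t$, use the sawblade relation $2\succ 2t$ (together with the fact that $2t$ is the maximum of the relabeled poset) to force $2,\ldots,t$ into the final $t-1$ positions, and then count the admissible slots for $1$ to the right of the relabeled old $1$ (now $t+1$). Your write-up is in fact more careful than the paper's, since you explicitly check that the spine relations $k\succ t+k$ for $k\ge 2$ follow by transitivity through $2\succ 2t$, a point the paper absorbs into a single phrase.
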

\begin{proof}
We will construct each possible $\pi'$ from $\pi$. 
To start, we take $\pi$ and consider it as a partial linear extension of $SAW_{s+1,t}$, updating the values of elements by adding $t$, lacking only the elements $1, \dots, t$. 
The elements $2, \dots, t$ must come at the end of this linear extension by the covering relations of the poset. 
The only element whose position is not fixed is 1. 
We then construct a child $\pi'$ of $\pi$ for each possible position of 1, which can be placed at any point after $label(\pi)+t$ and before 2. 
These correspond to children with labels $t-1, t, t+1, \dots, label(\pi) + t-1$, respectively.
\end{proof}
  
Note that each linear extension $\pi$ of $SAW_{s,t}$ has $t + label(\pi)$ children.
  
\begin{corollary}
\label{cor:gen-tree-iso}
For any $s \ge 1$, the generating tree of linear extensions of $SAW_{s,t}$ is isomorphic to the generating tree given by
\begin{align*}
Root: \hspace{.5cm} &(0)\\
Rule: \hspace{.5cm} &(j) \rightarrow (t-1)(t)\dots(j+t-1)
\end{align*}
\end{corollary}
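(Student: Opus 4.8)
The plan is to obtain the isomorphism directly from Theorem~\ref{thm:gen-tree-succession-rule} together with a single base-case check at the root. Recall that a generating tree is completely determined by two pieces of data: the label of its root, and a succession rule that produces the multiset of labels of a node's children as a function of that node's label alone. Hence, to establish the claimed isomorphism, it suffices to match these two pieces of data between the combinatorial generating tree of linear extensions of the $SAW_{s,t}$ (whose depth-$s$ nodes are the linear extensions of $SAW_{s,t}$, with children given by the tooth-adding construction) and the abstract tree with root $(0)$ and rule $(j) \rightarrow (t-1)(t)\cdots(j+t-1)$.

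First I would dispose of the succession rule, which is where essentially all the content lives and which is already done for us. Theorem~\ref{thm:gen-tree-succession-rule} asserts that for every $s \ge 1$ and every linear extension $\pi$ of $SAW_{s,t}$, the labels of the children of $\pi$ are exactly $t-1, t, t+1, \ldots, label(\pi)+t-1$. The key point to emphasize is that this list depends on $\pi$ only through $label(\pi)$, so the combinatorial tree really is label-driven, i.e. generated by a succession rule in the technical sense. Writing $j = label(\pi)$, this list is verbatim the abstract rule $(j) \rightarrow (t-1)(t)\cdots(j+t-1)$, so the two succession rules coincide.

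Next I would verify the root. The root of the combinatorial tree is the empty linear extension of the $0 \times t$ poset, to which we assign the label $0$ so that it matches the abstract root $(0)$. To justify this convention I would check that the children of this root, namely the linear extensions of $SAW_{1,t}$, are labeled exactly as the rule applied to $(0)$ predicts, which yields a single child with label $(0)+t-1 = t-1$. Since no covering relations are added to $EN_{1,t}$ when $s = 1$, the poset $SAW_{1,t}$ is a single chain with the unique linear extension $1, 2, \ldots, t$; here $1$ occupies the first of $t$ positions, i.e. position $t-(t-1)$, so its label is $t-1$. Thus the root has a single child of label $t-1$, in agreement with the rule.

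With both the root label and the succession rule matched, the two generating trees are isomorphic, which completes the argument. The honest assessment is that there is no real obstacle here: the corollary is an immediate repackaging of Theorem~\ref{thm:gen-tree-succession-rule}. The only point requiring care is the bookkeeping at the root, where the $label$ function is not literally defined on the empty extension and must be extended by the convention $label = 0$; the mild task is to confirm, via the $s=1$ base case computed above, that this convention is consistent with the succession rule.
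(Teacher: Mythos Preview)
Your proposal is correct and takes essentially the same approach as the paper, which simply states that the corollary is immediate from Theorem~\ref{thm:gen-tree-succession-rule}. Your additional verification of the root convention (checking that the unique linear extension of $SAW_{1,t}$ has label $t-1$, consistent with applying the rule to $(0)$) is a worthwhile piece of bookkeeping that the paper leaves implicit.
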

\begin{proof}
This is immediate from Theorem \ref{thm:gen-tree-succession-rule}.
\end{proof}

We call the generating tree in Corollary \ref{cor:gen-tree-iso} the {\em $t$-Fuss-Catalan} generating tree. 
Note that the 2-Fuss-Catalan generating tree is isomorphic to West's Catalan generating tree \cite[Ex.~4]{West}.
  

\begin{theorem}
\label{thm:gen-tree-func}
For $s,t \ge 1$, the number of nodes at depth $s$ in the $t$-Fuss-Catalan tree is the Fuss-Catalan number 
\begin{equation*}
\frac{1}{st+1} \binom{st+1}{s} = \frac{1}{(t-1)s+1} \binom{st}{s}.
\end{equation*}
\end{theorem}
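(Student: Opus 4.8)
The plan is to encode the entire $t$-Fuss-Catalan tree in one bivariate generating function that records both the depth and the label of a node, translate the succession rule of Corollary~\ref{cor:gen-tree-iso} into a functional equation, and then recover the $y=1$ specialization (which counts all nodes at each depth) by the kernel method. Concretely, I would set $f_s(y) = \sum_\pi y^{label(\pi)}$, where the sum runs over the nodes at depth $s$, so that $f_0(y) = 1$ and the quantity I want is $N_s = f_s(1)$. The rule $(j) \to (t-1)(t)\cdots(j+t-1)$ says that a node of label $j$ contributes $y^{t-1} + y^t + \cdots + y^{j+t-1}$ to the next level; summing this geometric series over all nodes and collecting terms yields the recurrence
\[
f_{s+1}(y) = \frac{y^{t-1}}{y-1}\bigl(y\,f_s(y) - f_s(1)\bigr).
\]

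Next I would pass to $F(x,y) = \sum_{s \ge 0} f_s(y)\,x^s$ and write $C(x) = F(x,1) = \sum_{s\ge0} N_s x^s$ for the generating function to be computed. Multiplying the recurrence by $x^{s+1}$ and summing over $s$ turns it into the single functional equation
\[
F(x,y)\bigl[(y-1) - x y^t\bigr] = (y-1) - x y^{t-1} C(x).
\]
The kernel $(y-1) - xy^t = 0$ has a unique formal power series solution $Y = Y(x)$ with $Y(0)=1$, characterized by $Y = 1 + x Y^t$. Because $F(x,y)$ is a genuine power series, the numerator on the right must vanish at $y = Y$; substituting $Y-1 = xY^t$ into $(Y-1) - xY^{t-1}C(x) = 0$ and cancelling $xY^{t-1}$ gives $C(x) = Y(x)$. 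Thus $C(x)$ is precisely the Fuss--Catalan series satisfying $C = 1 + xC^t$.

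Finally I would extract coefficients. Writing $R(x) = C(x) - 1$ puts the equation in Lagrange form $R = x(1+R)^t$, whence Lagrange inversion gives $N_s = [x^s]R = \tfrac1s [u^{s-1}](1+u)^{ts} = \tfrac1s\binom{ts}{s-1}$, which rewrites as $\frac{1}{st+1}\binom{st+1}{s} = \frac{1}{(t-1)s+1}\binom{st}{s}$, the desired formula.

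I expect the one genuinely delicate step to be the kernel-method manipulation: justifying, within the ring of formal power series, that $Y(x)$ is a well-defined series with $Y(0)=1$ (the implicit function theorem applies since $\partial_y[(y-1)-xy^t] = 1 \neq 0$ at $(x,y)=(0,1)$), and that evaluating the numerator of $F$ at $y=Y$ is legitimate and forces it to vanish. Everything else---deriving the recurrence from the succession rule and carrying out the Lagrange inversion---is routine. As sanity checks one verifies $N_1 = 1$ and $N_2 = t$ directly from the tree, and observes that when $t=2$ the equation $C = 1 + xC^2$ recovers the Catalan generating function, consistent with the earlier remark that the $2$-Fuss--Catalan tree is West's Catalan tree.
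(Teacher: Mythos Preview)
Your proposal is correct and follows essentially the same route as the paper: both set up the bivariate generating function $F(x,y)=G(x,y)$ tracking depth and label, translate the succession rule into the same functional equation, and apply the kernel method to recover $F(x,1)$ as the Fuss--Catalan series satisfying $Y=1+xY^t$. The only difference is cosmetic: you extract coefficients explicitly via Lagrange inversion, whereas the paper cites \cite[Ex.~5 in Sec.~7.5]{Graham} for the same identification.
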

\begin{proof}
Define $G(x,y)$ to be the generating function for the nodes of the generating tree given as follows: 
$$G(x,y) = \sum_\sigma x^{l(\sigma)} y^{p(\sigma)},$$ 
with $l(\sigma)$ denoting the depth of the node $\sigma$ in the tree (with the root at depth $0$), and $p(\sigma)$ indicating the label of $\sigma$. 
This generating function can equivalently be written in terms of $G_p(x)$, the coefficient of $y^p$. 
That is, we have 
$$G(x,y) = \sum_{p \ge 0} G_p(x) y^p.$$
Since each node other than the root is a child of another node, we have
\begin{align*}
G(x,y) &= x^0 y^0 + x \sum_{p \ge 0} G_p(x) (y^{t-1} + y^{t} + \dots + y^{p+t-1})\\
&= 1 + \frac{x y^{t-1}}{y-1} \left( y G(x,y) - G(x,1) \right).
  \end{align*}
Rearranging, we find 
$$G(x,y) \left(1 - \frac{x y^t}{y-1} \right) = 1 - \frac{x y^{t-1}}{y-1} G(x;1).$$
We now use the kernel method \cite{kernel} to find $G(x,1)$. 
In particular, we set 
$$y(x) = \sum_{s=0}^{\infty} \frac{1}{st+1} \binom{st+1}{s} x^s$$ 
and we use the fact \cite[Ex.~5 in Sec.~7.5]{Graham} that ${\displaystyle 1 - \frac{x y^{t}}{y-1} = 0}$ to find 
$$G(x,1) = \frac{y-1}{x y^{t-1}}.$$
Now the fact that $y-1 = x y^t$ means $G(x,1) = y(x)$, and the result follows.
\end{proof}

\begin{corollary}
\label{cor:1243 formula}
For all $s,t \ge 1$,
$$|EN_{s,t}(1243)| = \frac{1}{(t-1)s + 1} \binom{st}{s}.$$
\end{corollary}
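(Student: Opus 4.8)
The plan is to assemble the corollary directly from the chain of results already in hand, since no new mathematical content is required. First I would invoke Theorem~\ref{thm:sawblade}, which says that a linear extension of $EN_{s,t}$ avoids $1243$ precisely when it is a linear extension of the sawblade poset $SAW_{s,t}$. Consequently $|EN_{s,t}(1243)|$ equals the number of linear extensions of $SAW_{s,t}$, and the remaining task is purely enumerative.

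Next I would recall the design of the generating tree introduced before Theorem~\ref{thm:gen-tree-succession-rule}: by construction each node at depth $s$ corresponds to a linear extension of $SAW_{s,t}$, and the succession rule of that theorem produces every linear extension of $SAW_{s+1,t}$ as a child of a linear extension of $SAW_{s,t}$. Hence the number of linear extensions of $SAW_{s,t}$ is the number of nodes at depth $s$ in this tree. Applying Corollary~\ref{cor:gen-tree-iso}, that tree is isomorphic to the $t$-Fuss-Catalan generating tree, so the two trees share the same node count at every depth.

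Finally I would apply Theorem~\ref{thm:gen-tree-func}, which evaluates the number of nodes at depth $s$ in the $t$-Fuss-Catalan tree as $\frac{1}{(t-1)s+1}\binom{st}{s}$. Chaining these three equalities gives $|EN_{s,t}(1243)| = \frac{1}{(t-1)s+1}\binom{st}{s}$, as claimed.

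I do not expect a genuine obstacle here; the substantive work was carried out in the preceding theorems, and this corollary is essentially bookkeeping. The one point I would take care to verify is that the correspondence between depth-$s$ nodes and linear extensions of $SAW_{s,t}$ is a true bijection, that is, that the insertion procedure of Theorem~\ref{thm:gen-tree-succession-rule} neither omits a linear extension nor generates one twice, since the final count rests entirely on this. Once that is confirmed, the formula follows at once.
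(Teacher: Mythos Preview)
Your proposal is correct and follows essentially the same route as the paper: the paper's proof simply says the result is immediate from Theorem~\ref{thm:gen-tree-func} and Corollary~\ref{cor:gen-tree-iso}, and you have unpacked that chain of implications (additionally citing Theorem~\ref{thm:sawblade}, which the paper leaves implicit since the generating tree was already built on $SAW_{s,t}$). No further work is needed.
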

\begin{proof}
This is immediate from Theorem \ref{thm:gen-tree-func} and Corollary \ref{cor:gen-tree-iso}.
\end{proof}
  
The Fuss-Catalan numbers that appear in Corollary \ref{cor:1243 formula} are known to count a variety of generalized Catalan objects. 
For example, it's not difficult to use the discussion following \cite[Eq.~(7.69)]{Graham} to show these numbers count a type of generalized Catalan paths we will call Fuss-Catalan paths.

For all $s \ge 0$ and all $t \ge 2$, a {\em $t$-Fuss-Catalan path of semilength $s$} is a lattice path consisting of $s$ unit East and $(t-1)s$ unit North steps, with the property that each initial string of steps includes at least $t-1$ times as many $N$s as $E$s.
Equivalently, a $t$-Fuss-Catalan path must remain on or above the line ${\displaystyle y=(t-1) x}$.
As we see in Figure \ref{fig:FCpathexample}, 
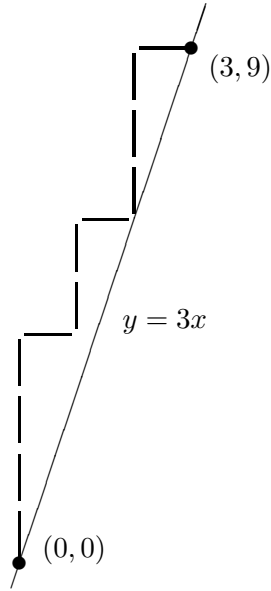
\begin{figure}[ht]
\centering
\setlength{\unitlength}{.3in}
\begin{picture}(3.2,9.5)
\put(-.14,-.43){\line(1,3){3.4}}
\put(0,0){\circle*{.25}}
\put(3,9){\circle*{.25}}
\put(.4,.1){$(0,0)$}
\put(3.3,8.5){$(3,9)$}
\put(1.8,4.1){${\displaystyle y = 3x}$}
\thicklines
\put(0,0.1){\line(0,1){.8}}
\put(0,1.1){\line(0,1){.8}}
\put(0,2.1){\line(0,1){.8}}
\put(0,3.1){\line(0,1){.8}}
\put(0.1,4){\line(1,0){.8}}
\put(1,4.1){\line(0,1){.8}}
\put(1,5.1){\line(0,1){.8}}
\put(1.1,6){\line(1,0){.8}}
\put(2,6.1){\line(0,1){.8}}
\put(2,7.1){\line(0,1){.8}}
\put(2,8.1){\line(0,1){.8}}
\put(2.1,9){\line(1,0){.8}}
\end{picture}
\caption{The $4$-Fuss-Catalan path $NNNNENNENNNE$ and the line $y = 3x$.}
\label{fig:FCpathexample}
\end{figure}
the path $NNNNENNENNNE$ is a $4$-Fuss-Catalan path of semilength $3$.
Note that the 2-Fuss-Catalan paths are the classical Catalan paths.

We conclude this section by giving a constructive bijection between $EN_{s,t}(1243)$ and the set of $t$-Fuss-Catalan paths of semilength $s$, which gives us an alternative proof of Theorem \ref{thm:gen-tree-func} and Corollary \ref{cor:1243 formula}.
As we describe this mapping, we will use the term {\em root} to refer to an element of the $t$th spine.
Thus, the vertices on the bottom-right side of $SAW_{s,t}$ are the roots. 
For example, in Figure \ref{fig:SAW43} the vertices labeled $1$, $4$, $7$, and $10$ are roots. 
In our function, these roots will correspond to unit East steps. 
All other vertices of the poset will correspond to unit North steps.
    
Given a linear extension in $EN_{s,t}(1243)$, we construct the associated $t$-Fuss-Catalan path as follows.
First, in an ordered $s$-tuple record the numbers corresponding to the positions (counted from the start of the linear extension) of the roots. 
Then complement the $s$-tuple with respect to the total number of vertices, $st$. 
Finally, construct a string of $N$s and $E$s with total length $st$ by placing an $E$ in the positions (counted from left to right) corresponding to the integer entries in the complemented $s$-tuple, and then filling in the remaining positions with an $N$.
    
As an example, suppose we are given the linear extension $10\ 11\ 7\ 12\ 8\ 9\ 4\ 5\ 1\ 6\ 2\ 3$ of the poset in Figure \ref{fig:SAW43}.
The $s$-tuple of its root positions is $(1, 3, 7, 9)$, and the complement of this $s$-tuple is $(12, 10, 6, 4)$. 
The resulting lattice path is $NNNENENNNENE$, which is a $3$-Fuss-Catalan path of semilength 4.

In general, the covering relations in $SAW_{s,t}$ guarantee that the $t-1$ non-root vertices in each tooth, which map to North steps, will always precede the root in their tooth, which maps to an East step. 
Consequently, the resulting lattice path will always be a $t$-Fuss-Catalan path of semilength $s$.

To show our map is a bijection, we construct its inverse. 
Given a $t$-Fuss-Catalan path of semilength $s$, read the path from right to left, constructing an $s$-tuple recording the positions (counted from the right end) of the $E$s. 
Map these entries of the $s$-tuple to the positions of the $s$ roots of a linear extension of $EN_{s,t}$. 
Fill these positions with the integers $t(s-1)+1, t(s-2)+1, \dots, t+1, 1$ in decreasing order from left to right. 
Then fill the non-root positions with the integers $t(s-1)+2, t(s-1) +3, \dots, st, t(s-2)+2, t(s-2)+3, \dots, t(s-1), \dots, 2, 3, \dots, t$ in this order from left to right. 
Notice that the relative positioning of the North and East steps required for a $t$-Fuss-Catalan path ensures that at least one root will precede the start of each new block of $t-1$ non-roots in the linear extensions we are constructing, so the resulting permutation is indeed a linear extension of $SAW_{s,t}$. 
The reader can verify that our two functions are inverses of one another, so we have proved the following result.
       
\begin{theorem}
\label{thm:pfaff fuss bijection}
There is a constructive bijection between the set of all linear extensions of $SAW_{s,t}$ and the set of all $t$-Fuss-Catalan paths of semilength $s$.
\end{theorem}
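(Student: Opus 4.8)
The plan is to verify that the two maps described just above the theorem are well defined and mutually inverse; call the forward map $\Phi$ (linear extension $\to$ path) and the reverse map $\Psi$ (path $\to$ permutation). The engine behind everything is a single structural fact about $SAW_{s,t}$: both the roots and the non-roots each form a \emph{chain}, so their relative orders are forced in every linear extension, and hence a linear extension is determined entirely by the positions of its roots. First I would record this. The roots are the $t$th spine of $EN_{s,t}$, which is a chain, so they appear in the fixed order $(s-1)t+1 \prec (s-2)t+1 \prec \cdots \prec 1$. For the non-roots I would splice the tooth chains $(j-1)t+2 \prec (j-1)t+3 \prec \cdots \prec jt$ together using the added covering relations $(j+1)t \prec (j-1)t+2$ (equivalently $jt \prec (j-2)t+2$ after reindexing), which link the top of tooth $j$ to the first non-root of tooth $j-1$. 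This produces one long chain threading the teeth in the order $s, s-1, \dots, 1$:
$$(s-1)t+2 \prec \cdots \prec st \prec (s-2)t+2 \prec \cdots \prec (s-1)t \prec \cdots \prec 2 \prec \cdots \prec t.$$
Finally I would note that a root $(j-1)t+1$ is the bottom of its tooth and sits on the rightmost spine, so nothing but lower roots is required to precede it; thus the only covering relations between a root and a non-root are of the form ``the root of tooth $j$ precedes $(j-1)t+2$,'' and a linear extension of $SAW_{s,t}$ is exactly a shuffle of the root chain into the non-root chain subject to these constraints.

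Next I would check that $\Phi$ lands in the right set. Since the complement step amounts to reading the linear extension from right to left and recording root $\mapsto E$, non-root $\mapsto N$, I would prove the prefix condition directly: whenever a root is reached in the right-to-left reading, all $t-1$ non-roots of its tooth have already appeared, because each of them \emph{follows} the root in the tooth chain and so precedes it under the reversed reading. Consequently, after $k$ of the $E$s have been produced at least $(t-1)k$ of the $N$s have been produced, which is precisely the requirement that the path stay on or above $y = (t-1)x$. This shows $\Phi(\pi)$ is a $t$-Fuss-Catalan path of semilength $s$.

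Then I would verify $\Psi$ and the inverse identities. By the chain lemma, the filling prescribed by $\Psi$—roots in decreasing order in the $E$-slots, non-roots in their forced chain order in the $N$-slots—is the \emph{only} assignment consistent with the two chains, so all I must confirm is that it respects the cross relations, i.e.\ that each root is slotted before the first non-root of its tooth. This is exactly where the Fuss-Catalan boundary condition is spent: the guarantee that every prefix of the path has at least $t-1$ times as many $N$s as $E$s ensures that, reading back, the root of each tooth precedes that tooth's block of non-roots, so $\Psi$ of a path is a genuine linear extension of $SAW_{s,t}$. Granting the chain lemma, $\Phi \circ \Psi = \mathrm{id}$ is immediate because the $E$-steps mark precisely the root positions $\Psi$ created, and $\Psi \circ \Phi = \mathrm{id}$ is immediate because the root values and non-root values of any linear extension are already in canonical chain order, so re-filling recovers the original permutation.

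The hard part is the chain lemma itself: one must check carefully that the tooth chains and the sawblade relations really do splice into a single non-root chain, which is an indexing verification at the tooth boundaries, and that no poset relation ever forces a non-root to precede a root. Everything else—the prefix computation for $\Phi$, the constraint-checking for $\Psi$, and the two composition identities—is routine bookkeeping once the lemma is in hand. As a bonus, this argument gives the promised independent reproof of Corollary \ref{cor:1243 formula}, since the count of $t$-Fuss-Catalan paths of semilength $s$ is already known.
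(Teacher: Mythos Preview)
Your proposal is correct and follows essentially the same approach as the paper: both construct the same bijection (root positions in the linear extension $\leftrightarrow$ $E$-step positions in the path, via complementation) and verify well-definedness in each direction using the same structural facts about $SAW_{s,t}$. Your explicit ``chain lemma''---that the roots and the non-roots each form a chain, so a linear extension is determined entirely by the set of root positions---makes the argument cleaner and more systematic than the paper's somewhat informal sketch (which leaves the inverse verification to the reader), but this is a difference of presentation rather than of method.
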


We can modify our construction above to obtain a set of lattice paths which are in bijection with $EN_{s,t}(12354)$.
These paths are another generalization of the classical Catalan paths, but they do not appear to have been previously studied.
We describe these paths in Section \ref{sec:opfd}.

\section{Catalan Zippers and $EN(2143)$}
\label{sec:transfermatrices}

Now that we have enumerated $EN_{s,t}(1243)$ using both a generating tree and Fuss-Catalan paths, we turn our attention to $EN_{s,t}(2143)$.
As we did for $EN_{s,t}(1243)$, we first show that $EN_{s,t}(2143)$ is actually the set of all linear extensions of another poset.
More specifically, we show that $EN_{s,t}(2143)$ is exactly the set of linear extensions of $EN_{s,t}$ in which all elements of the $j$th tooth appear before any element of the $(j-2)$th tooth.

\begin{theorem}
\label{thm:EN2143sawblade}
For all $s \ge 1$ and all $t \ge 2$, a linear extension of $EN_{s,t}$ avoids $2143$ if and only if $jt$ precedes $(j-3)t+1$ for all $j$ with $3 \le j \le s$.
\end{theorem}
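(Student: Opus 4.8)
The plan is to prove both implications by exhibiting or forbidding an explicit $2143$ pattern, exactly in the spirit of the proofs of Theorems \ref{thm:EN132213} and \ref{thm:sawblade}. First I would record a convenient reading of the poset: in $EN_{s,t}$ the $j$th tooth is the chain $(j-1)t+1 \prec (j-1)t+2 \prec \cdots \prec jt$ (so two entries of one tooth always occur in increasing order), and each spine is an arithmetic progression of common difference $t$ along which the \emph{larger} value is forced to precede the smaller. I would also observe at the outset that, since $jt$ is the last entry of tooth $j$ to appear in any linear extension and $(j-3)t+1$ is the first entry of tooth $j-2$ to appear, the hypothesis ``$jt$ precedes $(j-3)t+1$ for all $j$'' is equivalent to ``every entry of tooth $j$ precedes every entry of tooth $j-2$ for all $j$ with $3 \le j \le s$,'' and hence, by transitivity of $\prec$, to ``every entry of tooth $j$ precedes every entry of tooth $j'$ whenever $j>j'$ and $j \equiv j' \pmod 2$.'' This reformulation is what drives both directions.

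For necessity I would argue the contrapositive: assuming $(j-3)t+1$ precedes $jt$ for some $j$ with $3 \le j \le s$, I produce a $2143$ pattern using the entries $(j-2)t+1$, $(j-3)t+1$, $jt$, $(j-1)t$, whose values satisfy $(j-3)t+1 < (j-2)t+1 < (j-1)t < jt$ (this is where $t \ge 2$ is used, to separate the middle two). The entry $(j-2)t+1$ precedes $(j-3)t+1$ since they share a spine with the former larger; $(j-3)t+1$ precedes $jt$ by hypothesis; and $jt$ precedes $(j-1)t$ since they share the first spine with $jt$ larger. Thus these entries appear in exactly this order and realize the relative order $2143$.

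For sufficiency I would assume the condition (in its transitive-closure form) and suppose toward a contradiction that a linear extension contains a subsequence $w_1 w_2 w_3 w_4$ of relative order $2143$, so $w_2 < w_1 < w_4 < w_3$. The routine step is to locate the teeth: because two entries of one tooth appear in increasing order, $w_1,w_2$ lie on distinct teeth and likewise $w_3,w_4$, and comparing values (tooth $j$ consists of the values in $((j-1)t,jt]$) forces tooth indices $j_2 < j_1 \le j_4 < j_3$, whence $j_3 \ge j_2+2$. In particular $w_2$ (tooth $j_2$) appears before $w_3$ (tooth $j_3$) with $j_3 \ge j_2+2$. To derive a contradiction I choose $j^\ast \in \{j_2,\,j_2+1\}$ with $j^\ast \equiv j_3 \pmod 2$; then $j_3 - j^\ast$ is a positive even integer, so the condition forces all of tooth $j_3$ before all of tooth $j^\ast$. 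If $j^\ast = j_2$ this gives $w_3 \prec w_2$ directly; if $j^\ast = j_2+1$ it gives $w_3 \prec w_2 + t$, and since $w_2$ and $w_2+t$ share a spine with $w_2+t$ the larger we have $w_2+t \prec w_2$, so again $w_3 \prec w_2$. Either way $w_3$ must precede $w_2$, contradicting the order of the subsequence.

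The step I expect to be the main obstacle is exactly this last conversion when $j_3$ and $j_2$ have opposite parity: the hypothesis only relates teeth two apart, so there is no single relation placing tooth $j_3$ before tooth $j_2$, and one must route through the intermediate tooth $j_2+1$ together with one within-spine relation. Getting the parity bookkeeping right, and checking the boundary indices (that $j^\ast \ge 1$ and that every intermediate tooth used in the chain lies in the range $[3,s]$ where the extra relations live, and that $w_2+t$ is a genuine entry because $j_2+1 \le s$), is the delicate part; everything else is routine once the tooth/spine reading of $EN_{s,t}$ is fixed.
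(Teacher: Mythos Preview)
Your proposal is correct. The necessity direction is identical to the paper's: both exhibit the explicit $2143$ pattern $(j-2)t+1,\,(j-3)t+1,\,jt,\,(j-1)t$ using two spine relations and the failed hypothesis.

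For sufficiency the two arguments diverge slightly. The paper, after establishing (as you do) that the tooth indices satisfy $j_b < j_a < j_c$ and hence $j_c \ge j_b + 2$, does \emph{not} pass to the transitive-closure reformulation. Instead it directly names the index at which the hypothesis fails: taking $j = j_c$, the first element $(j_b-1)t+1$ of $b$'s tooth precedes $b$, which precedes $c$, which precedes $j_c t$; and since $(j_b-1)t+1$ and $(j_c-3)t+1$ lie on spine $t$ with the latter at least as large, $(j_c-3)t+1$ also precedes $j_c t$. This is a single chain of spine/tooth relations with no case split. Your route instead proves the stronger statement ``$w_3$ precedes $w_2$'' by invoking the two-apart ordering transitively, which forces the parity case and the auxiliary element $w_2 + t$ when $j_3 \not\equiv j_2 \pmod 2$. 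Both are valid; the paper's version is shorter because it only needs the hypothesis to fail at one index rather than to force a full ordering, while your version has the virtue of making explicit the equivalent description of $EN_{s,t}(2143)$ as those linear extensions in which teeth of the same parity are totally ordered.
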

\begin{proof}
($\Rightarrow$)
To prove the contrapositive, suppose $\pi$ is a linear extension of $EN_{s,t}$ such that for some $j$, $3 \le j \le s$, the entry $(j-3)t+1$ precedes $jt$.
Since $(j-3)t+1$ covers $(j-2)t+1$ in $EN_{s,t}$, we know $(j-2)t+1$ precedes $(j-3)t+1$ in $\pi$.
Similarly, since $(j-1)t$ covers $jt$ in $EN_{s,t}$ we know $jt$ precedes $(j-1)t$ in $\pi$.
Now the subsequence $(j-2)t+1,(j-3)t+1,jt,(j-1)t$ of $\pi$ has the same relative order as $2143$, so $\pi$ does not avoid $2143$.

($\Leftarrow$)
To prove the contrapositive in this case, suppose $\pi$ is a linear extension of $EN_{s,t}$ and $a b c d$ is a subsequence of $\pi$ with the same relative order as $2143$.
Since the entries of each tooth appear in increasing order in $\pi$, the entries $a$ and $b$ must be in different teeth in $EN_{s,t}$.
Furthermore, the entries in the higher (that is, lower-numbered) teeth are less than the entries in the lower teeth, so $a$'s tooth is below $b$'s tooth.
For the same reason, $c$'s tooth is also below $a$'s tooth.
Now the first element of $b$'s tooth must appear before the last element of $c$'s tooth, which implies that if $c$ is in tooth $j$ then $(j-3)t+1$ precedes $jt$.
\end{proof}

Recall that a {\em Catalan path} of semilength $n$ is a sequence of $n$ $N$s and $n$ $E$s such that for each $j$, $1 \le j \le 2n$, the number of $N$s among the first $j$ terms of the sequence is greater than or equal to the number of $E$s among the first $j$ terms.
Theorem \ref{thm:EN2143sawblade} allows us to recognize the linear extensions in $EN_{s,t}(2143)$ as a new kind of generalized Catalan path.

\begin{definition}
\label{defn:catzipper}
For any nonnegative integers $s$ and $t$, a {\em Catalan zipper} of dimension $s$ and length $t$ is a sequence of $t$ $N_j$s for each $j$, $1 \le j \le s$, such that the following hold.
\begin{enumerate}
\item
For each $j$, $1 \le j \le s$, if we replace each $N_j$ with $N$, replace each $N_{j+1}$ with $E$, and remove all other entries, then we obtain a Catalan path.
\item
For each $j$, $1 \le j \le s-2$, the rightmost $N_j$ precedes the leftmost $N_{j+2}$.
\end{enumerate}
\end{definition}

Note that the Catalan zippers of dimension $2$ are essentially the Catalan paths.
In addition, the function which replaces each entry $a$ in a linear extension of $EN_{s,t}$ with $N_{t+1-\tooth(a)}$, where $\tooth(a)$ is the number of the tooth containing $a$, is a bijection between $EN_{s,t}(2143)$ and the set of Catalan zippers of dimension $s$ and length $t$.
In Table \ref{table:numberofzippers} 
\begin{table}[ht]
\centering
\begin{tabular}{c|ccccc}
$s \setminus t$ & $1$ & $2$ & $3$ & $4$ & $5$ \\
\hline
$1$ & $1$ & $1$ & $1$ & $1$ & $1$ \\
$2$ & $1$ & $2$ & $5$ & $14$ & $42$ \\
$3$ & $1$ & $4$ & $21$ & $121$ & $728$ \\
$4$ & $1$ & $8$ & $89$ & $1094$ & $14041$ \\
$5$ & $1$ & $16$ & $377$ & $9841$ & $266110$ \\
$6$ & $1$ & $32$ & $1597$ & $88574$ & $5057369$
\end{tabular}
\caption{$|EN_{s,t}(2143)|$ and the number of Catalan zippers of dimension $s$ and length $t$ for small $s$ and $t$.}
\label{table:numberofzippers}
\end{table}
we have the number of linear extensions of $EN_{s.t}(2143)$ and the number of Catalan zippers of dimension $s$ and length $t$ for small $s$ and $t$.

The sequences in the columns of Table \ref{table:numberofzippers} seem to satisfy linear homogeneous recurrence relations with constant coefficients.
To obtain these recurrence relations in general, we consider Catalan zippers according to the number of $N_s$s at their right ends.
More specifically, for positive integers $j$, $s$, and $t$, let $a_{t,j}(s)$ denote the number of Catalan zippers of dimension $s$ and length $t$ whose last $j+1$ entries are $N_{s-1} \underbrace{N_s \cdots N_s}_j$, and note that $|EN_{s,t}(2143)| = \sum_{j=1}^t a_{t,j}(s)$.
We will obtain simple recurrence relations for $a_{t,j}(s)$, but to do this, we will need yet another variation on the idea of a Catalan path.

\begin{definition}
\label{defn:jkcatpath}
For any positive integer $n$ and any integers $j,k$ with $0 \le j \le n$ and $1 \le k \le n$, a {\em $j,k$-Catalan path} $p$ of semilength $n$ is a sequence of $j$ $N$s and $n$ $E$s such that the following hold.
\begin{enumerate}
\item
The sequence $\underbrace{N\cdots N}_{n-j} p$ is a Catalan path.
\item
The last $k+1$ entries of $p$ are $N\underbrace{E\cdots E}_k$, or $k = n$ and $p = \underbrace{E \cdots E}_k$.
\end{enumerate}
We write $b_{j,k}(n)$ to denote the number of $j,k$-Catalan paths of semilength $n$.
\end{definition}

Notice that in any Catalan zipper of dimension $s$ and length $t$, the entries to the right of the rightmost $N_{s-2}$ form a $j,k$-Catalan path of semilength $t$, where $j$ is the number of $N_{s-1}$s to the right of the rightmost $N_{s-2}$ and $k$ is the number of $N_s$s to the right of the rightmost $N_{s-1}$.
This observation allows us to use the numbers $b_{j,k}(n)$ to obtain our recurrence relation for $a_{t,k}(s)$.

\begin{theorem}
\label{thm:atksrecurrence}
For all $s \ge 2$, all $t \ge 1$, and all $k$ with $1 \le k \le t$,
\begin{equation}
\label{eqn:atksrecurrence}
a_{t,k}(s) = \sum_{j=1}^t b_{j,k}(t) a_{t,j}(s-1).
\end{equation}
\end{theorem}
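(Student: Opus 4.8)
The plan is to prove \eqref{eqn:atksrecurrence} bijectively, by sharpening the observation that precedes the theorem into a size-preserving correspondence. Concretely, I would construct a bijection between the Catalan zippers of dimension $s$ and length $t$ whose last $k+1$ entries are $N_{s-1}\underbrace{N_s\cdots N_s}_k$ (of which there are $a_{t,k}(s)$) and the disjoint union, over $j$ with $1\le j\le t$, of the Cartesian products of the set of $j,k$-Catalan paths of semilength $t$ (of size $b_{j,k}(t)$) with the set of Catalan zippers of dimension $s-1$ and length $t$ ending in $N_{s-2}\underbrace{N_{s-1}\cdots N_{s-1}}_j$ (of size $a_{t,j}(s-1)$). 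Counting both sides then yields the recurrence.

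First I would prove the structural fact underlying that observation: in any Catalan zipper, only $N_{s-1}$s and $N_s$s appear to the right of the rightmost $N_{s-2}$. The key is that, by the first condition of Definition \ref{defn:catzipper}, sending $N_i\mapsto N$ and $N_{i+1}\mapsto E$ produces a Catalan path, and every nonempty Catalan path ends in $E$; hence the rightmost $N_{i+1}$ follows the rightmost $N_i$, so the positions of the rightmost occurrences strictly increase with the index. This forces every $N_i$ with $i\le s-2$ to occur at or before the rightmost $N_{s-2}$, while condition~(2) with $j=s-2$ forces all $t$ copies of $N_s$ to occur after it. Thus the tail past the rightmost $N_{s-2}$ consists of exactly $j$ copies of $N_{s-1}$ together with all $t$ copies of $N_s$, and its image under $N_{s-1}\mapsto N$, $N_s\mapsto E$ is the advertised $j,k$-Catalan path of semilength $t$.

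The forward map deletes every $N_s$. The remaining word uses $N_1,\ldots,N_{s-1}$ with $t$ copies each, and I would check it is a dimension-$(s-1)$ zipper by noting that the defining conditions for all relevant indices involve only symbols $N_i$ with $i\le s-1$ and are therefore untouched by removing $N_s$. Because the deleted tail contributed exactly the $j$ trailing $N_{s-1}$s lying past the rightmost $N_{s-2}$, this smaller zipper ends in $N_{s-2}\underbrace{N_{s-1}\cdots N_{s-1}}_j$; I pair it with the $j,k$-Catalan path recorded above. The inverse map starts from such a smaller zipper together with a $j,k$-Catalan path and re-interleaves $t$ fresh $N_s$s among its $j$ trailing $N_{s-1}$s, reading the path left to right so that each $N$ marks the next $N_{s-1}$ and each $E$ inserts an $N_s$. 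That the two maps are mutually inverse is then immediate.

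The substantive work, and the step I expect to be the main obstacle, is confirming that the inverse map always lands on a genuine dimension-$s$ zipper. The conditions for indices $\le s-2$ survive because inserting $N_s$s does not disturb the relevant reduced subwords, and condition~(2) for $j=s-2$ holds because every $N_s$ is placed to the right of the rightmost $N_{s-2}$. The one genuine check is the first condition for index $s-1$: the $N_{s-1}/N_s$ reduction of the reconstructed word is $\underbrace{N\cdots N}_{t-j}$, coming from the $N_{s-1}$s left of the rightmost $N_{s-2}$ which all precede every $N_s$, followed by the chosen path, and this is a Catalan path precisely because the first property in Definition \ref{defn:jkcatpath} guarantees that prepending $t-j$ North steps to a $j,k$-Catalan path yields one. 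Finally, since any zipper counted by $a_{t,k}(s)$ ends in $N_{s-1}N_s^k$, its tail always contains at least one $N_{s-1}$, so $j\ge 1$ and the degenerate alternative $p=\underbrace{E\cdots E}_t$ of Definition \ref{defn:jkcatpath} never arises; this is what lets the sum run from $j=1$ to $t$.
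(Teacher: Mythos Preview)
Your proposal is correct and follows essentially the same bijection as the paper: decompose a dimension-$s$ zipper into the dimension-$(s-1)$ zipper obtained by deleting every $N_s$, together with the $j,k$-Catalan path encoding the tail past the rightmost $N_{s-2}$. You supply more detail than the paper does---in particular, the argument that the rightmost occurrences of $N_i$ are strictly increasing in $i$, the verification that the inverse map yields a genuine zipper via the first clause of Definition~\ref{defn:jkcatpath}, and the observation that $j\ge 1$ so the degenerate path $E^t$ is never needed---but the underlying construction is the same.
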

\begin{proof}
The result follows from the fact that each Catalan zipper of dimension $s$ and length $t$ whose last $k+1$ entries are $N_{s-1} \underbrace{N_s \cdots N_s}_k$ can be constructed uniquely as follows.

First choose $j$ with $1 \le j \le t$, and then choose a $j,k$-Catalan path $p$ of semilength $t$.
Now choose a Catalan zipper $\pi'$ of dimension $s-1$ and length $t$ whose last $j+1$ entries are $N_{s-2} \underbrace{N_{s-1} \cdots N_{s-1}}_j$.
To merge $p$ and $\pi'$ into a longer Catalan zipper, first let $p'$ be the sequence we obtain from $p$ by replacing $N$ with $N_{s-1}$ and $E$ with $N_s$.
Then, in $\pi'$, replace the tail $N_{s-2} \underbrace{N_{s-1} \cdots N_{s-1}}_j$ with $p'$.
By construction the resulting sequence is a Catalan zipper of dimension $s$ and length $t$ whose last $k+1$ entries are $N_{s-1} \underbrace{N_s \cdots N_s}_k$.
Furthermore, given such a Catalan zipper, $p'$ is the sequence of entries to the right of its rightmost $N_{s-2}$ and $\pi'$ is the Catalan zipper we obtain by removing all of the $N_s$s from $\pi$.
Therefore, each such Catalan zipper is uniquely constructed as described, and the result follows.
\end{proof}

Equation \eqref{eqn:atksrecurrence} is only useful if we can compute $b_{j,k}(n)$ efficiently.
Fortunately, as we show next, $b_{j,k}(n)$ satisfies a remarkably simple recurrence relation.

\begin{theorem}
\label{thm:brecurrence}
We have $b_{1,1}(1) = 1$, and for all $n \ge 2$ and all $j$ and $k$ with $1 \le j,k \le n$,
\begin{equation}
\label{eqn:brecurrence}
b_{j,k}(n) = b_{j,k}(n-1) + b_{j-1,k}(n).
\end{equation}
\end{theorem}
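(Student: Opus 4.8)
The plan is to prove the base case by direct inspection and the recurrence by a bijective argument that sorts the $j,k$-Catalan paths of semilength $n$ according to their first entry. For the base case, a $1,1$-Catalan path of semilength $1$ has one $N$ and one $E$, and the two conditions force it to be $NE$ (so that $N^0\cdot NE = NE$ is a Catalan path whose last two entries are $N\underbrace{E\cdots E}_{1}$); hence $b_{1,1}(1)=1$. For the recurrence I would fix $n\ge 2$ and $1\le j,k\le n$, and take an arbitrary $j,k$-Catalan path $p$ of semilength $n$. The first useful observation is that since $j\ge 1$ the path $p$ contains an $N$, so it is not the all-$E$ word $E^n$; its last $k+1$ entries are therefore exactly $N\underbrace{E\cdots E}_{k}$, and I split into the cases $p=Np'$ and $p=Ep'$.

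If $p=Np'$, then $p'$ has $j-1$ $N$s and $n$ $E$s, and the prepended word $N^{\,n-(j-1)}p' = N^{\,n-j}p$ is literally the Catalan path witnessing condition $1$ for $p$; since deleting the leading $N$ leaves the tail untouched, $p'$ is a $(j-1),k$-Catalan path of semilength $n$. The lone degenerate case is $j=1$, $k=n$, where $p=NE^n$ and $p'=E^n$ satisfies condition $2$ via the all-$E$ clause. Conversely, prepending $N$ to any $(j-1),k$-Catalan path of semilength $n$ returns a path in this case, so $p\mapsto p'$ is a bijection contributing $b_{j-1,k}(n)$.

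If $p=Ep'$, then the prepended block $N^{\,n-j}$ must be nonempty, forcing $j\le n-1$; moreover $k\le n-1$, since $k=n$ would force $p=N^{j}E^{n}$, which begins with $N$. Here $p'$ has $j$ $N$s and $n-1$ $E$s, and the crux is condition $1$: I claim that $N^{\,n-j}Ep'$ being a Catalan path forces $N^{\,(n-1)-j}p'$ to be one. This follows from a prefix count: for any prefix of $p'$ with $a$ $N$s and $b$ $E$s, the Catalan condition on $N^{\,n-j}Ep'$ reads $(n-j)+a \ge 1+b$, which rearranges to $((n-1)-j)+a \ge b$, precisely the Catalan condition for $N^{\,(n-1)-j}p'$. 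The tail $N\underbrace{E\cdots E}_{k}$ is undisturbed because the leading $E$ sits strictly before the last $k+1$ positions (as $j+n-k\ge 2$), so $p'$ is a $j,k$-Catalan path of semilength $n-1$; prepending $E$ inverts the map, contributing $b_{j,k}(n-1)$.

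Summing the two cases yields $b_{j,k}(n)=b_{j,k}(n-1)+b_{j-1,k}(n)$, under the convention that out-of-range terms (such as $b_{j,n}(n-1)$, or $b_{j,k}(n-1)$ with $j=n$) are $0$; this is consistent with the fact that no path $p=Ep'$ arises when $j=n$ or $k=n$. I expect the main obstacle to be the verification of condition $1$ in the $p=Ep'$ case, i.e.\ confirming that deleting both a prepended $N$ and the genuine leading $E$ preserves the Catalan property, together with the bookkeeping that certifies the degenerate boundaries ($j=n$, $k=n$, and the all-$E$ clause) and the invertibility of both maps.
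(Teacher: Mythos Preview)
Your proof is correct and follows essentially the same approach as the paper: both split the $j,k$-Catalan paths of semilength $n$ according to their first letter, remove that letter, and identify the two resulting sets with the right-hand side. The paper handles the boundary cases $j=1$ and $k=n$ by first observing $b_{1,k}(n)=b_{j,n}(n)=1$ and checking the identity there separately, whereas you fold those cases into the bijection itself (invoking the all-$E$ clause for $b_{0,n}(n)$ and noting that no $E$-initial path exists when $j=n$ or $k=n$); your explicit prefix-count verification of condition~1 in the $p=Ep'$ case is a detail the paper leaves implicit.
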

\begin{proof}
The fact that $b_{1,1}(1) = 1$ is clear.
In addition, it's not difficult to check that $b_{1,k}(n) = 1$ for all $k$ with $1 \le k \le n$, and that $b_{j,n}(n) = 1$ for all $j$ with $1 \le j \le n$, so \eqref{eqn:brecurrence} holds for $j = 1$ and $k = n$.

When $j > 1$ and $k < n$ there are two kinds of $j,k$-Catalan paths:  those beginning with $E$ and those beginning with $N$.
Removing the leading $E$ from a path of the first kind results in a $j,k$-Catalan path of semilength $n-1$.
Conversely, if $p$ is a $j,k$-Catalan path of semilength $n-1$ then $E p$ is a $j,k$-Catalan path of semilength $n$ which begins with $E$.
Similarly, removing the leading $N$ from a path of the second kind results in a $j-1,k$-Catalan path of semilength $n$, and if $p$ is a $j-1,k$-Catalan path of semilength $n$ then $N p$ is a $j,k$-Catalan path of semilength $n$ which begins with $N$.
Now the result follows.
\end{proof}

If we write $B_n$ to denote the $n \times n$ matrix whose $j,k$th entry is $b_{j,k}(n)$ then we have
$$B_2 = \left(\begin{matrix}1 & 1 \\ 1 & 1 \end{matrix}\right),$$
$$B_3 = \left( \begin{matrix} 1 & 1 & 1 \\ 2 & 2 & 1 \\ 2 & 2 & 1 \end{matrix} \right),$$
and
$$B_4 = \left(\begin{matrix} 1 & 1 & 1 & 1 \\ 3 & 3 & 2 & 1 \\ 5 & 5 & 3 & 1 \\ 5 & 5 & 3 & 1 \end{matrix}\right).$$
As these examples suggest, $B_n$ has a symmetry which is not apparent in \eqref{eqn:brecurrence}.
To prove this symmetry persists, we first prove an analogue of \eqref{eqn:brecurrence}.

\begin{theorem}
For all $n \ge 2$ and all $j$ and $k$ with $1 \le j,k \le n$,
\begin{equation}
\label{eqn:otherbrecurrence}
b_{j,k}(n) = b_{j,k+1}(n) + b_{j-1,k-1}(n-1).
\end{equation}
\end{theorem}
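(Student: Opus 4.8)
The plan is to give a direct bijective proof that mirrors, at the right end of a path, the first-symbol argument used for \eqref{eqn:brecurrence}. Recall that every $j,k$-Catalan path $p$ of semilength $n$ can be written uniquely as $p = w\,N E^k$, where $w$ consists of $j-1$ $N$s and $n-k$ $E$s and $N^{n-j}p$ is a Catalan path. I would classify these paths according to the single symbol immediately preceding the terminal block $NE^k$ (equivalently, the last symbol of $w$), with the degenerate case in which $w$ is empty (which forces $p = NE^k$, i.e.\ $j = 1$ and $n = k$) grouped together with the ``preceding symbol is $N$'' case. The two resulting families will biject with the two terms on the right-hand side.

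First I would treat the family where $w$ ends in $E$, say $w = w'E$, so $p = w'\,E N E^k$. The map $p \mapsto w'\,N E^{k+1}$ simply transposes the adjacent pair $EN$ into $NE$. Transposing $EN \to NE$ raises exactly one partial-sum height and leaves all others unchanged, so $N^{n-j}p$ Catalan implies the image is Catalan; hence the image is a genuine $j,k+1$-path. For the inverse I would transpose the terminal $N$ with the first of its trailing $E$s; the key computation is that in any $j,k+1$-path the height just before the block $NE^{k+1}$ equals $k$, so the reverse transposition $NE \to EN$ dips only to height $k-1 \ge 0$ and preserves the Catalan condition. This shows the family bijects with all $j,k+1$-paths, contributing $b_{j,k+1}(n)$.

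Next I would treat the family where $w$ ends in $N$ (or $w$ is empty). Here I delete the terminal-block $N$ together with one trailing $E$, sending $p = w\,N E^k$ to $q = w\,E^{k-1}$. Because $w$ ends in $N$, the image ends in $N E^{k-1}$ and has $j-1$ $N$s and $n-1$ $E$s; the empty-$w$ case maps to the all-$E$ path $E^{k-1}$, which is the unique path counted by the $k=n$ clause of Definition \ref{defn:jkcatpath}. The Catalan condition transfers because the height just before the deleted $N$ is $k-1$, so removing that $N$ and one $E$ lowers the tail heights without driving any below $0$; reinserting recovers the inverse. Thus this family bijects with the $j-1,k-1$-paths of semilength $n-1$, contributing $b_{j-1,k-1}(n-1)$, and adding the two contributions yields \eqref{eqn:otherbrecurrence}.

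Finally, I would dispose of the boundary indices. For $k=n$ the term $b_{j,n+1}(n)$ is vacuously $0$, since a path of semilength $n$ has only $n$ $E$s, and indeed no path in the first family exists. For $k=1$ the term $b_{j-1,0}(n-1)$ should be read as $0$; this is consistent because the height just before the terminal $N$ in a path of the second family would have to be $k-1 = 0$ while carrying another $N$ below it, forcing height $-1$, so the second family is empty when $k=1$. The step I expect to require the most care is verifying surjectivity of the two maps onto the full target sets while tracking the height (Catalan) condition through the transposition and the deletion, and in particular pinning down the exact heights (namely $k$ before $NE^{k+1}$, and $k-1$ before the terminal $N$) that make both maps reversible at the boundary values.
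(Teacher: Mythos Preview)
Your proof is correct and takes essentially the same approach as the paper: both classify $j,k$-Catalan paths by the symbol immediately preceding the rightmost $N$, and your two maps (transposing $EN \to NE$, and deleting the terminal $N$ together with one trailing $E$) coincide exactly with the paper's maps (moving that $E$ to the right end, and removing the rightmost $N$ and rightmost $E$). Your version is in fact more careful than the paper's about verifying the Catalan condition via heights and about the boundary cases $k=1$ and $k=n$.
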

\begin{proof}
In the proof of Theorem \ref{thm:brecurrence} we saw that $b_{1,k}(n) = 1$ for all $k$ with $1 \le k \le n$ and $b_{j,n}(n) = 1$ for all $j$ with $1 \le j \le n$, so \eqref{eqn:otherbrecurrence} holds for $j = 1$ and $k = n$.
When $j > 1$ and $k < n$ there are two kinds of $j,k$-Catalan paths:  those in which the entry immediately preceding the rightmost $N$ is $E$ and those in which it is $N$.
In the first case, moving the $E$ immediately preceding the rightmost $N$ to the right end of the path is a bijection with the set of $j,k+1$-Catalan paths of semilength $n$.
In the second case, removing the rightmost $N$ and the rightmost $E$ is a bijection with the set of $j-1,k-1$-Catalan paths of semilength $n-1$.
\end{proof}

Combining \eqref{eqn:brecurrence} and \eqref{eqn:otherbrecurrence} leads directly to a proof of our observed symmetry.

\begin{theorem}
For all $n \ge 1$ and all $j,k$ with $1 \le j,k \le n$, we have $b_{j,k}(n) = b_{n+1-k,n+1-j}(n)$.
That is, the number of $j,k$-Catalan paths of semilength $n$ is equal to the number of $n+1-k,n+1-j$-Catalan paths of semilength $n$.
\end{theorem}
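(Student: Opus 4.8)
The plan is to prove the identity by the uniqueness-of-solution method: I will show that the reversed array $c_{j,k}(n) := b_{n+1-k,\,n+1-j}(n)$ obeys exactly the same initial data and the same recurrence \eqref{eqn:brecurrence} that pins down $b_{j,k}(n)$, and then conclude $c_{j,k}(n) = b_{j,k}(n)$ for all admissible $j,k,n$. First I would record that the values $b_{1,k}(n)=1$ for $1\le k\le n$ and $b_{j,n}(n)=1$ for $1\le j\le n$, together with \eqref{eqn:brecurrence} in the range $j\ge 2$, $k\le n-1$, determine every $b_{j,k}(n)$ uniquely: for fixed $n$ the recurrence expresses $b_{j,k}(n)$ in terms of a value at level $n-1$ and a value with smaller first index at level $n$, so a downward induction on $j$ (with $j=1$ and $k=n$ as the base) followed by induction on $n$ (with $b_{1,1}(1)=1$ as the ground case) computes the whole array. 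Hence any array satisfying the same boundary values and the same recurrence must coincide with $b$.

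Next I would check the boundary conditions for $c$. Since reversing indices interchanges the line $j=1$ with the line $k=n$, I get $c_{1,k}(n)=b_{n+1-k,\,n}(n)=1$ and $c_{j,n}(n)=b_{1,\,n+1-j}(n)=1$, along with $c_{1,1}(1)=b_{1,1}(1)=1$, matching $b$ exactly. The crux is verifying that $c$ satisfies \eqref{eqn:brecurrence}, and this is where the second recurrence enters. Writing out the three relevant terms,
\[
c_{j,k}(n) = b_{n+1-k,\,n+1-j}(n), \quad c_{j,k}(n-1) = b_{n-k,\,n-j}(n-1), \quad c_{j-1,k}(n) = b_{n+1-k,\,n+2-j}(n),
\]
I would set $J=n+1-k$ and $K=n+1-j$ and invoke \eqref{eqn:otherbrecurrence}, which reads $b_{J,K}(n) = b_{J,K+1}(n) + b_{J-1,K-1}(n-1)$; substituting the three displayed expressions gives precisely $c_{j,k}(n) = c_{j-1,k}(n) + c_{j,k}(n-1)$, i.e. the recurrence \eqref{eqn:brecurrence} for $c$.

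The one point requiring genuine care is the range. For $c$ to satisfy \eqref{eqn:brecurrence} I need $j\ge 2$ and $k\le n-1$, and under the substitution $J=n+1-k$, $K=n+1-j$ these translate to $J\ge 2$ and $K\le n-1$, which is exactly the regime $J>1$, $K<n$ in which \eqref{eqn:otherbrecurrence} was established by its bijective argument. With the boundary values and the recurrence both matching, uniqueness forces $c=b$, which is the claimed symmetry. I expect the main obstacle to be purely bookkeeping: keeping the reversed indices straight and confirming that the two recurrences are genuine ``transposes'' of one another under $(j,k)\mapsto(n+1-k,\,n+1-j)$, rather than any deeper combinatorial difficulty. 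If the uniqueness framing feels heavy, the identical computation can instead be packaged as a direct double induction on $n$ and then on $j$, using \eqref{eqn:brecurrence} to reduce $b_{j,k}(n)$ to cases already known to be symmetric and \eqref{eqn:otherbrecurrence} to reassemble the reversed indices.
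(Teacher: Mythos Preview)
Your proposal is correct and is essentially the same argument as the paper's, just repackaged. The paper argues by induction on $n+j$: it expands $b_{j,k}(n)$ via \eqref{eqn:brecurrence}, applies the symmetry inductively to each piece, and then collapses the result using \eqref{eqn:otherbrecurrence} with $(j,k)\mapsto(n+1-k,n+1-j)$ --- exactly the computation you carry out in your display, and exactly the alternative you describe in your final sentence. Your uniqueness framing is a pleasant way to organize the bookkeeping, but the underlying engine (both recurrences plus the boundary values $b_{1,k}(n)=b_{j,n}(n)=1$) is identical.
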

\begin{proof}
The result is clear when $n+j \le 2$, so we argue by induction on $n+j$.
Using \eqref{eqn:brecurrence} and our induction hypothesis we find
\begin{eqnarray*}
b_{j,k}(n) &=& b_{j,k}(n-1) + b_{j-1,k}(n) \\
&=& b_{n-k,n-j}(n-1)+b_{n+1-k,n+2-j}(n).
\end{eqnarray*}
Now the result follows from \eqref{eqn:otherbrecurrence} when we replace $j$ with $n+1-k$ and $k$ with $n+1-j$.
\end{proof}

As our small examples suggest, we can also show that the upper right corner of $B_n$ is the first $n$ rows of Pascal's triangle.

\begin{theorem}
For all $n \ge 1$ and all $j$ and $k$ with $k \ge j$ we have $b_{j,k}(n) = \binom{n-k+j-1}{j-1}$.
\end{theorem}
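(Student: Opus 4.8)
The plan is to show that the claimed expression $f(j,k,n) = \binom{n-k+j-1}{j-1}$ satisfies exactly the same recurrence and the same boundary conditions that determine $b_{j,k}(n)$ throughout the triangle $k \ge j$, and then to conclude $f = b$ by induction. The natural tool is \eqref{eqn:brecurrence}, since it is already established and, as I will explain, its two terms never push the indices out of the region $k \ge j$.

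First I would record the two boundary facts observed inside the proof of Theorem \ref{thm:brecurrence}, namely $b_{1,k}(n) = 1$ for $1 \le k \le n$ and $b_{j,n}(n) = 1$ for $1 \le j \le n$. Both match $f$ instantly, since $f(1,k,n) = \binom{n-k}{0} = 1$ and $f(j,n,n) = \binom{j-1}{j-1} = 1$. Thus $f$ and $b$ already agree on the edge $j = 1$ and on the diagonal edge $k = n$, which will serve as the base cases.

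The one genuine computation is to check that $f$ obeys \eqref{eqn:brecurrence} when $j \ge 2$ and $k < n$. Here $f(j,k,n-1) = \binom{n-k+j-2}{j-1}$ and $f(j-1,k,n) = \binom{n-k+j-2}{j-2}$, so Pascal's identity gives
\begin{equation*}
f(j,k,n-1) + f(j-1,k,n) = \binom{n-k+j-2}{j-1} + \binom{n-k+j-2}{j-2} = \binom{n-k+j-1}{j-1} = f(j,k,n).
\end{equation*}
Hence $f$ solves the same recurrence as $b$ in the interior, and agreeing on the boundary it must equal $b$ everywhere in the triangle.

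To make this rigorous I would order the pairs $(j,n)$ lexicographically and induct: in \eqref{eqn:brecurrence} the first term $b_{j,k}(n-1)$ lowers $n$ while fixing $j$, and the second term $b_{j-1,k}(n)$ lowers $j$, so both right-hand arguments are lexicographically smaller, and the recurrence is invoked only for $j \ge 2$, $k < n$. I expect the main (and essentially only) obstacle to be the bookkeeping that keeps the induction inside the triangle $k \ge j$ and handles the degenerate edge $k = n$ correctly: I must verify that when $k \ge j$ the reduced terms still satisfy the claim's hypotheses, i.e.\ that $b_{j,k}(n-1)$ has $j \le k \le n-1$ (which holds because $k < n$) and that $b_{j-1,k}(n)$ has $k \ge j-1$ (which holds because $k \ge j$), so the inductive hypothesis genuinely supplies the binomial value of each term; the edge $k = n$ is excluded from the recurrence and covered by the boundary base case instead. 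If one preferred, the companion recurrence \eqref{eqn:otherbrecurrence} would give a parallel argument, but \eqref{eqn:brecurrence} keeps the index tracking simplest.
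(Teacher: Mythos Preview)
Your proof is correct and follows essentially the same route as the paper: both invoke the boundary facts $b_{1,k}(n)=1$, $b_{j,n}(n)=1$ and then verify the binomial expression against the recurrence \eqref{eqn:brecurrence} via Pascal's identity. The only cosmetic differences are that the paper inducts on $j+n$ rather than on the lexicographic order of $(j,n)$, and that you spell out explicitly why the reduced indices remain in the region $k\ge j$, which the paper leaves implicit.
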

\begin{proof}
We saw in the proof of Theorem \ref{thm:brecurrence} that $b_{1,k}(n) = 1$ for all $k$ with $1 \le k \le n$ and $b_{j,n}(n) = 1$ for all $j$ with $1 \le j \le n$, so the result holds for $j = 1$ and $k = n$.
Arguing by induction on $j+n$ and using \eqref{eqn:brecurrence}, we find
\begin{eqnarray*}
b_{j,k}(n) &=& b_{j,k}(n-1) + b_{j-1,k}(n) \\
&=& \binom{n-1-k+j-1}{j-1}+\binom{n-k+j-2}{j-2} \\
&=& \binom{n-k+j-1}{j-1},
\end{eqnarray*}
as desired.
\end{proof}

We can now use the transfer matrix method and standard results about rational generating functions to find $|EN_{s,t}(2143)|$ for small values of $t$.

\begin{theorem}
\label{thm:2143formulas}
For all $s \ge 1$,
\begin{enumerate}[{\upshape (i)}]
\item
$|EN_{s,1}(2143)| = 1$;
\item
$|EN_{s,2}(2143)| = 2^{s-1}$;
\item
$|EN_{s,3}(2143)| = F_{3s-1}$;
\item
${\displaystyle |EN_{s,4}(2143)| = \frac12 \left(3\cdot 9^{s-1} + (-1)^s\right)}$.
\end{enumerate}
Here $F_n$ is the $n$th Fibonacci number, defined with $F_0 = 0$ and $F_1 = 1$.
\end{theorem}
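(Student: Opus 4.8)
The plan is to read the recurrence in Theorem~\ref{thm:atksrecurrence} as a transfer-matrix iteration and then grind each case through standard rational–generating-function theory. Writing $u_s = |EN_{s,t}(2143)| = \sum_{k=1}^t a_{t,k}(s)$ and collecting the $a_{t,k}(s)$ into a row vector $\mathbf{a}(s) = (a_{t,1}(s),\dots,a_{t,t}(s))$, equation~\eqref{eqn:atksrecurrence} says exactly that $\mathbf{a}(s) = \mathbf{a}(s-1)\,B_t$, where $B_t$ is the matrix with $(j,k)$ entry $b_{j,k}(t)$. Iterating gives $\mathbf{a}(s) = \mathbf{a}(2)\,B_t^{\,s-2}$ for $s \ge 2$, and $u_s = \mathbf{a}(s)\mathbf{1}$ with $\mathbf{1}$ the all-ones column vector. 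The base vector $\mathbf{a}(2)$ is just the bottom row of $B_t$: a Catalan zipper of dimension $2$ is a Catalan path, and reading such a path with last $j+1$ entries $N E^j$ as a $t,j$-Catalan path of semilength $t$ shows $a_{t,j}(2) = b_{t,j}(t)$. By the transfer matrix method (see \cite[Sec.~4.7]{StanleyVol1}), $\sum_s u_s x^s$ is therefore rational with denominator $\det(I - x B_t)$.

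The key simplification I would exploit is that $\det(I - x B_t)$ has far lower degree than $t$. The bottom two rows of each $B_t$ agree, so $B_t$ is singular, and its repeated rows force several eigenvalues to be $0$; each such eigenvalue contributes only a trivial factor $1$ to $\det(I - x B_t) = \prod_i (1 - \lambda_i x)$. Reading the nonzero spectrum off the explicit matrices recorded just before the theorem, I expect to find $\det(I - xB_1) = 1 - x$, $\det(I - xB_2) = 1 - 2x$, $\det(I - xB_3) = 1 - 4x - x^2$, and $\det(I - xB_4) = (1-9x)(1+x) = 1 - 8x - 9x^2$; the last follows because $B_4$ has trace $8$, forcing the two nonzero eigenvalues to be $9$ and $-1$. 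Each denominator then yields a linear recurrence for $u_s$, valid once $s$ exceeds the degree of the rational function's numerator, so it suffices to check a few initial values, all of which are already listed in Table~\ref{table:numberofzippers}.

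The four closed forms then follow by routine solving and matching of initial data. For (i), $u_s = u_{s-1}$ with $u_1 = 1$ gives $u_s = 1$; for (ii), $u_s = 2u_{s-1}$ with $u_1 = 1$ gives $2^{s-1}$. For (iv), $u_s = 8u_{s-1} + 9u_{s-2}$ has characteristic roots $9$ and $-1$, so $u_s = A\,9^s + B(-1)^s$; fitting $u_1 = 1$ and $u_2 = 14$ gives $A = \tfrac16$ and $B = \tfrac12$, which is precisely $\tfrac12(3\cdot 9^{s-1} + (-1)^s)$.

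I expect case (iii) to need the most care. Here $u_s = 4u_{s-1} + u_{s-2}$ has characteristic roots $2 \pm \sqrt5$, and the crux is recognizing that these are exactly $\varphi^3$ and $\psi^3$ for $\varphi = \tfrac{1+\sqrt5}{2}$ and $\psi = \tfrac{1-\sqrt5}{2}$; consequently the subsequence $F_{3s-1}$ obeys the same recurrence, via the identity $F_{n+6} = 4F_{n+3} + F_n$. Checking $u_1 = F_2 = 1$ and $u_2 = F_5 = 5$ then pins down $u_s = F_{3s-1}$. The one genuine piece of bookkeeping common to all four cases is the zero eigenvalues: the transfer-matrix argument guarantees the recurrence only for $s$ beyond a small threshold, so I would confirm each closed form against the first two or three tabulated values to absorb the low-order terms, after which the recurrence together with the matched initial conditions forces agreement for every $s \ge 1$.
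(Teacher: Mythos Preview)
Your proposal is correct and follows essentially the same route as the paper: both derive the recurrence for $\{|EN_{s,t}(2143)|\}_s$ from $\det(I - xB_t)$ via the transfer-matrix method, then solve with matched initial values. The differences are cosmetic---your structural remark that repeated rows/columns of $B_t$ kill several eigenvalues, and your Binet-style verification for (iii), versus the paper's direct Fibonacci-identity chain---and do not change the argument. One small wording issue: saying the trace of $B_4$ alone ``forces'' the nonzero eigenvalues to be $9$ and $-1$ is not a valid inference by itself; you need a second invariant (e.g.\ the sum of principal $2\times 2$ minors, or just compute $\det(I - xB_4)$ directly as you indicate earlier).
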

\begin{proof}
(i)
$EN_{s,1}$ has only one linear extension, namely $s, s-1, \ldots,1$, and this linear extension avoids $2143$.

(ii)
By \cite[Thm.~4.7.2]{StanleyVol1} and \cite[Thm.~4.1.1]{StanleyVol1}, since $\det(I_2 - x B_2) = 1-2x$, the sequence $\{|EN_{s,2}(2143)|\}_{s=1}^\infty$ satisfies the recurrence relation $a_n = 2a_{n-1}$.
Now (ii) follows by induction on $s$.

(iii)
This is similar to the proof of (ii).
Specifically, since $\det(I_3-xB_3) = 1-4x-x^2$, the sequence $\{|EN_{s,3}(2143)|\}_{s=1}^\infty$ satisfies the recurrence relation $a_n = 4a_{n-1}+a_{n-2}$.
Now the result follows by induction on $s$, since
\begin{eqnarray*}
4 F_{3s-4}+F_{3s-7} &=& 4 F_{3s-4}+F_{3s-5}-F_{3s-6} \\
&=& F_{3s-3}+3F_{3s-4}-F_{3s-6} \\
&=& F_{3s-3}+2F_{3s-4}+F_{3s-5} \\
&=& 2F_{3s-3}+F_{3s-4} \\
&=& F_{3s-2}+F_{3s-3} \\
&=& F_{3s-1}.
\end{eqnarray*}

(iv)
This is similar to the proofs of (ii) and (iii), using the fact that $\det(I_4-xB_4) = 1-8x-9x^2$.
\end{proof}

We can use the same techniques as in the proof of Theorem \ref{thm:2143formulas} to find a recurrence relation satisfied by $\{|EN_{s,t}(2143)|\}_{s=1}^\infty$ for any $t \ge 1$.
For example, when $t = 5$ we find the sequence satisfies $a_n = 16 a_{n-1}+3a_{n-2}-235a_{n-3}+36a_{n-4}$.
But when $t \ge 5$ this does not seem to result in a simple formula for $|EN_{s,2}(2143)|$.

\section{Inversions and Pattern Avoiding Linear Extensions}
\label{sec:statistics}

In this section we extend Yakoubov's work in a new direction, turning our attention from classical enumeration of pattern avoiding linear extensions to $q$-enumeration of these linear extensions.
More specifically, recall that an {\em inversion} in a permutation (or, more generally, any finite sequence of integers) $\pi$ is a pair $i < j$ such that $\pi(i) > \pi(j)$;  we write $\inv(\pi)$ to denote the number of inversions in $\pi$.
For any rectangular poset $P$ and any permutation $\sigma$, let $P(\sigma)(q)$ be the generating function given by
$$P(\sigma)(q) = \sum_{\pi \in P(\sigma)} q^{\inv(\pi)}.$$
In this section we determine $P(\sigma)(q)$ for a variety of $P$ and $\sigma$.

Several of our results in this section are most easily stated in terms of well-known $q$-analogues of the Catalan numbers.
To describe these $q$-analogues, recall that a {\em Catalan word} of length $2n$ is a sequence of $n$ $0$s and $n$ $1$s in which the number of $0$s in each initial segment is greater than or equal to the number of $1$s in that initial segment.
Following \cite{fh}, we write $C_n(q)$ to denote the polynomial given by
\begin{equation}
\label{eqn:Cnq}
C_n(q) = \sum_{w \in CW_n} q^{\inv(w)},
\end{equation}
where $CW_n$ is the set of Catalan words of length $2n$.
Sometimes it is more convenient to work with the polynomial we get by reversing the coefficients of $C_n(q)$, so we again follow \cite{fh} by writing $\tilde{C}_n(q)$ to denote the polynomial given by
\begin{equation}
\label{eqn:Cntildeq}
\tilde{C}_n(q) = q^{\binom{n}{2}} C_n\left(q^{-1}\right).
\end{equation}
Although we will not need them, it is worth noting that these polynomials satisfy the recurrences
\begin{equation}
\label{eqn:Crecurrence}
C_n(q) = \sum_{k=1}^n q^{k(n-k)} C_{k-1}(q) C_{n-k}(q)
\end{equation}
and
\begin{equation}
\label{eqn:Ctilderecurrence}\tilde{C}_n(q) = \sum_{k=1}^n q^{k-1} \tilde{C}_{k-1}(q) \tilde{C}_{n-k}(q)
\end{equation}
for $n \ge 1$.
We can now prove $q$-analogues of some of our results in Section \ref{sec:lengththree}.

\begin{theorem}
\label{thm:qCats}
For all $s \ge 1$ and all $t \ge 1$,
\begin{enumerate} [{\upshape (i)}] 
\item
$EN_{2,t}(321)(q) = q^{\binom{t+1}{2}} \tilde{C}_t(q);$
\item
$EN_{s,2}(123)(q) = q^{3\binom{s}{2}} C_s(q);$
\item
$NE_{s,2}(123)(q) = q^{\frac{s(3s-1)}{2}} C_s(q);$
\item
$NE_{2,t}(123)(q) = q^{\frac{t(3t-1)}{2}} \tilde{C}_t(q).$
\end{enumerate}
\end{theorem}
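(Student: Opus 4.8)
The plan is to establish each of the four $q$-analogues by setting up an explicit bijection between the relevant set of pattern-avoiding linear extensions and the Catalan words $CW_t$ (or $CW_s$), and then tracking exactly how the inversion number transforms under that bijection. I would handle the four parts in parallel since they all share the same skeleton, differing only in the shape ($2 \times t$ versus $s \times 2$) and in the accounting of the inversion offset. For each part, first I would recall from Section \ref{sec:lengththree} the structural description of the linear extensions in question: by Theorem \ref{thm:EN, 321}(ii), an element of $EN_{2,t}(321)$ is a shuffle of the two increasing length-$t$ teeth, and these shuffles are naturally indexed by Catalan words (record a $0$ or $1$ according to which tooth contributes the next entry, with the poset constraint forcing the lattice-path condition). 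Similar structural descriptions underlie parts (ii)--(iv) via Theorem \ref{thm:EN, 123}(ii) and the exercises on $|NE_{s,2}(123)|$ and $|NE_{2,t}(123)|$.

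The heart of the argument is the inversion bookkeeping. For part (i), given a Catalan word $w \in CW_t$, I would argue that the associated linear extension $\pi$ has $\inv(\pi)$ equal to a fixed base amount plus a contribution that equals $\binom{t}{2} - \inv(w)$; the reversed-coefficient polynomial $\tilde{C}_t(q) = q^{\binom{t}{2}} C_t(q^{-1})$ is precisely designed to convert the statistic $\inv(w)$ into $\binom{t}{2} - \inv(w)$, which is why $\tilde{C}_t$ rather than $C_t$ appears. I would compute the base inversion count by taking the ``extremal'' linear extension (the one where all inversions among interleaved pairs are forced) and checking directly that the exponent offset is $\binom{t+1}{2}$. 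The analogous offsets $3\binom{s}{2}$, $\frac{s(3s-1)}{2}$, and $\frac{t(3t-1)}{2}$ in parts (ii)--(iv) arise from the same kind of direct count on the corresponding extremal extension, and whether $C$ or $\tilde{C}$ appears is dictated by whether the poset's labeling makes the interleaving statistic agree with $\inv(w)$ or with its complement.

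I would verify each offset by evaluating the generating function at $q = 1$ and confirming it collapses to the known cardinalities ($C_t$, $C_s$, etc.) from Section \ref{sec:lengththree}, which serves as a built-in consistency check but does not by itself pin down the exponent; the exponent must come from the explicit inversion computation. The main obstacle I anticipate is precisely this inversion offset computation: one must carefully separate the inversions of a linear extension into three classes---inversions internal to each tooth, inversions forced between the two teeth regardless of the shuffle, and the variable inversions governed by the Catalan word---and show that only the last class depends on $w$ while the first two sum to the stated constant. Getting the constant exactly right (and correctly deciding between $\inv(w)$ and its complement, hence between $C$ and $\tilde{C}$) is where sign and off-by-one errors naturally creep in. To control this I would work each case on a small explicit example (say $t = 2$ or $t = 3$) to fix the offset, then give the general inversion count by a clean decomposition argument rather than by induction, since the decomposition makes the three contribution classes transparent and avoids propagating arithmetic errors through a recurrence.
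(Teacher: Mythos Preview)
Your proposal is correct and follows essentially the same approach as the paper: both set up the natural bijection between the linear extensions and Catalan words, then decompose $\inv(\pi)$ into a fixed contribution (inversions within each tooth plus forced inversions between teeth) and a variable contribution governed by the word, with the choice of $C$ versus $\tilde{C}$ determined by whether this variable part equals $\inv(w)$ or its complement $\binom{n}{2}-\inv(w)$. The only cosmetic difference is that for part (i) the paper passes through the reverse $\pi^r$ (so that the word statistic appears directly as $\inv(w)$ and the $\tilde{C}_t$ arises at the end from $\inv(\pi^r)=\binom{2t}{2}-\inv(\pi)$), whereas you plan to work directly with $\pi$; these are equivalent bookkeepings.
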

\begin{proof}
To prove (i), first associate with each linear extension $\pi$ in $EN_{2,t}(321)$ a sequence $w(\pi)$ by replacing, in $\pi^r$ (the reverse of $\pi$), each of $1,2,\ldots,t$ with 0 and each of $t+1,t+2,\ldots,2t$ with $1$.
Note that this gives a bijection between $EN_{2,t}(321)$ and the set of Catalan words of length $2t$.
Furthermore, each inversion in $w(\pi)$ corresponds to an inversion in $\pi^r$.
The entries $1,2,\ldots,t$ and the entries $t+1,t+2,\ldots,2t$ appear in decreasing order in $\pi^r$, and these are the only other inversions in $\pi^r$, so by \eqref{eqn:Cnq} we have
$$\sum_{\pi \in EN_{2,t}(321)} q^{\inv(\pi^r)} = q^{2\binom{t}{2}}C_t(q).$$
Since $\inv(\pi^r) = \binom{2t}{2}-\inv(\pi)$, equation \eqref{eqn:Cntildeq} and some algebra complete the proof of (i).

To prove (ii), in each $\pi \in EN_{s,2}(123)$ we consider four kinds of inversions $i < j$, according to the parities of $\pi(i)$ and $\pi(j)$.
There are $2\binom{s}{2}$ of these in which $\pi(i)$ and $\pi(j)$ have the same parity, and there are $\sum_{j=1}^s (j-1) = \binom{s}{2}$ of them when $\pi(i)$ is odd and $\pi(j)$ is even.
To handle the case in which $\pi(i)$ is even and $\pi(j)$ is odd, note that we can associate with each $\pi$ a Catalan word $w(\pi)$ by replacing each odd entry with $0$ and each even entry with $1$, and this correspondence is a bijection between $EN_{s,2}(123)$ and the set of Catalan words of length $2s$.
Furthermore, $i<j$ is an inversion in $w(\pi)$ if and only if it is an inversion in $\pi$ in which $\pi(i)$ is even and $\pi(j)$ is odd.
Combining these observations with \eqref{eqn:Cnq} completes the proof of (ii).

The proof of (iii) is similar to the proof of (ii):  if $\pi \in NE_{s,2}(123)$ then $\pi$ has $2 \binom{s}{2}$ inversions $i < j$ in which $\pi(i)$ and $\pi(j)$ have the same parity, $\sum_{j=1}^s j = \binom{s+1}{2}$ inversions $i < j$ in which $\pi(i)$ is even and $\pi(j)$ is odd, and $C_s(q)$ is the generating function for the inversions in which $\pi(i)$ is odd and $\pi(j)$ is even.

Finally, the proof of (iv) is similar to the proof of (i).
\end{proof}

Note that the pattern avoidance conditions in Theorem \ref{thm:qCats} can be dropped, since in each case every linear extension of the given poset avoids the given pattern.
As a result, Theorem \ref{thm:qCats} also gives us $q$-analogues of Corollary \ref{cor:linextCat}.

Our last $q$-analogue of a result in Section \ref{sec:lengththree} involves the classical $q$-integers, defined by $[n]_q = 1+q+\cdots+q^{n-1}$ for $n \ge 1$.

\begin{theorem}
\label{thm:stq}
For all $s \ge 1$ and all $t \ge 1$,
\begin{equation}
\label{eqn:NEst213q}
NE_{s,t}(213)(q) = q^{s\binom{t}{2} +t\binom{s}{2}+\frac{(s-1)(t-1)(st-2)}{2}} [t]_q^{s-1}.
\end{equation}
\end{theorem}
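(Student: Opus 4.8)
The plan is to induct on $s$, using the same tooth-insertion construction that proved Theorem \ref{thm:NE(213)=NE(132)}, but now keeping track of the inversion statistic. Write $f_s(q) = NE_{s,t}(213)(q)$. Recall from the proof of Theorem \ref{thm:NE(213)=NE(132)} that every linear extension of $NE_{s+1,t}$ avoiding $213$ is obtained uniquely from a linear extension $\pi$ of $NE_{s,t}$ avoiding $213$ by adjoining the new (largest) tooth $\{st+1,\dots,(s+1)t\}$: the maximum $(s+1)t$ is placed at the very front, the remaining $t-1$ new entries are inserted in decreasing order, and the only freedom is a split parameter $i\in\{0,1,\dots,t-1\}$ recording how many of them are placed between $(s+1)t$ and the old first entry $a_{st}$ (the other $t-1-i$ being placed immediately after $a_{st}$). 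I would aim to show this insertion multiplies the generating function by $q^{c_s}[t]_q$ for an explicit constant $c_s$, and then solve the resulting recurrence.

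The heart of the argument is counting the inversions created by one insertion, which I would split into three types. First, inversions among the old entries are untouched, since the old entries keep both their relative order and their labels $1,\dots,st$; these contribute $\inv(\pi)$. Second, since the $t$ new entries are larger than every old entry, a new--old pair is an inversion exactly when the new entry precedes the old one: $(s+1)t$ precedes all $st$ old entries, each of the $i$ entries placed before $a_{st}$ precedes all $st$ old entries, and each of the $t-1-i$ entries placed after $a_{st}$ precedes $st-1$ old entries. Third, the new entries always appear in decreasing order, contributing $\binom{t}{2}$ inversions regardless of $i$. Adding these and simplifying, the number of inversions gained is $\binom{t}{2}+st^2-(t-1)+i$; crucially the dependence on the split is the single additive term $i$. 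Summing over $i=0,\dots,t-1$ therefore contributes a factor $[t]_q$, giving the recurrence $f_{s+1}(q)=q^{\,\binom{t}{2}+st^2-(t-1)}[t]_q\,f_s(q)$, with base case $f_1(q)=q^{\binom{t}{2}}$ coming from the unique decreasing extension of $NE_{1,t}$.

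Iterating the recurrence gives $f_s(q)=q^{\binom{t}{2}}[t]_q^{\,s-1}\,q^{\sum_{k=1}^{s-1}(\binom{t}{2}+kt^2-(t-1))}$, so the whole problem reduces to evaluating the exponent. Carrying out the sum yields the exponent $s\binom{t}{2}+t^2\binom{s}{2}-(s-1)(t-1)$, and the final step is the purely algebraic identity $t^2\binom{s}{2}-(s-1)(t-1)=t\binom{s}{2}+\frac{(s-1)(t-1)(st-2)}{2}$, which rewrites this in the form stated in the theorem. I expect the only real obstacle to be the bookkeeping in the second paragraph: one must verify that the split parameter affects the new--old inversion count in exactly an additive $+i$ fashion, so that the $i$-independent part factors out as $q^{c_s}$ while the $i$-dependent part assembles cleanly into $[t]_q$; the closing binomial identity is then routine but slightly fiddly.
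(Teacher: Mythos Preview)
Your proposal is correct and follows essentially the same approach as the paper: both argue by induction on $s$ via the tooth-insertion construction from Theorem~\ref{thm:NE(213)=NE(132)}, both partition the new inversions into the same three pieces (old--old, new--new, new--old), and both observe that the split parameter contributes only an additive $+i$ so that summing over it produces a factor of $[t]_q$. The only differences are cosmetic---your parameter $i\in\{0,\dots,t-1\}$ is the paper's $j-1$, and you go from $s$ to $s+1$ rather than from $s-1$ to $s$---and you spell out the closing algebraic identity a bit more explicitly than the paper does.
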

\begin{proof}
The only linear extension in $NE_{1,t}(213)$ is $t (t-1) \cdots 2 1$, so $NE_{1,t}(213)(q) = q^{\binom{t}{2}}$, and the result holds for $s = 1$.
Now suppose $s > 1$ and \eqref{eqn:NEst213q} holds with $s$ replaced by $s-1$;  we argue by induction on $s$.

In the proof of Theorem \ref{thm:NE(213)=NE(132)} we showed that $\pi \in NE_{s,t}(213)$ if and only if there is a $j$ with $1 \le j \le t$ and a linear extension $\pi' \in NE_{s-1,t}(213)$ such that $\pi$ has the form
$$\pi = \underbrace{st, (st-1), \ldots, st-j+1}_j,\underbrace{(s-1)t}_{\pi'(1)}, \underbrace{st-j,\ldots, (s-1)t-1}_{t-j},\pi'(2), \pi'(3), \ldots, \pi'((s-1)t).$$
Now the inversions in $\pi$ consist of the inversions in $\pi'$, the inversions among the $t$ largest entries of $\pi$, the inversions in which $\pi'(1) = (s-1)t$ is the smaller entry, and the inversions involving one of the $t$ largest entries of $\pi$ and one of the entries $\pi'(k)$ for $2 \le k \le (s-1)t$.
This means
\begin{equation}
\label{eqn:invrecurrence}
\inv(\pi) = \inv(\pi') + \binom{t}{2} + j + t((s-1)t-1).
\end{equation}
Using \eqref{eqn:invrecurrence}, we find
$$NE_{s,t}(213)(q) = \sum_{\pi' \in NE_{s-1,t}(213)} \sum_{j=1}^t q^{\binom{t}{2}+j+t((s-1)t-1)} q^{\inv(\pi')}.$$
Now by induction we have
\begin{eqnarray*}
NE_{s,t}(213)(q) &=& q^{\binom{t}{2}+t ((s-1)t-1)} \left(\sum_{\pi' \in NE_{s-1,t}(213)} q^{\inv(\pi')}\right) [t]_q \\
&=& q^{s\binom{t}{2} +t\binom{s}{2}+\frac{(s-1)(t-1)(st-2)}{2}} [t]_q^{s-1},
\end{eqnarray*}
as desired.
\end{proof}

Theorems \ref{thm:qCats} and \ref{thm:stq} include $q$-analogues of all of our results for one forbidden pattern of length three, so we now turn our attention to forbidden patterns of length four.
In these cases we do not have explicit factorizations of $EN_{s,t}(\sigma)(q)$ or $NE_{s,t}(\sigma)(q)$ for any $\sigma$, but we do have a result concerning the maximum and minimum degrees of the terms in $EN_{s,t}(1243)(q)$.

\begin{theorem}
\label{thm:invmaxmin}
For any positive integers $s$ and $t$, the inversion numbers of the linear extensions of $EN_{s,t}$ which avoid 1243 have minimum $(t^2 - t + 1) \binom{s}{2}$ and maximum $t^2 \binom{s}{2}$.
\end{theorem}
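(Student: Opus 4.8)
The plan is to compute the minimum and maximum of $\inv$ over $EN_{s,t}(1243)$ by exploiting Theorem \ref{thm:sawblade}, which identifies $EN_{s,t}(1243)$ with the set of linear extensions of the sawblade poset $SAW_{s,t}$. First I would separate inversions into two types: those internal to a single tooth, and those between distinct teeth. Since every tooth of $EN_{s,t}$ (hence of $SAW_{s,t}$) must appear in increasing order in any linear extension, the within-tooth contribution is zero. So every inversion is between two teeth, and the question reduces to counting, for each pair of teeth, how many inverted pairs the linear extension forces or permits.

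The key structural fact is that any two elements from different teeth are comparable in $EN_{s,t}$: all $t$ entries of the higher-numbered (lower) tooth $i$ are smaller than all $t$ entries of the lower-numbered (higher) tooth $i'$ when $i > i'$. Fix two teeth and consider the $2t$ entries they contribute; because each tooth is internally increasing, the number of between-tooth inversions they produce equals the number of ``merge inversions'' of the two increasing length-$t$ runs, which ranges over $\{0,1,\ldots,t^2\}$ in an ordinary shuffle but is constrained here by the covering relations of $SAW_{s,t}$. The heart of the argument is therefore to determine, for each pair of teeth, the minimum and maximum number of between-tooth inversions compatible with the sawblade constraints, then sum over all $\binom{s}{2}$ pairs.

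For the maximum, I would argue that one can realize a linear extension placing every element of a lower tooth before every element of a strictly higher tooth; this gives the full $t^2$ inversions per pair (all $t^2$ cross-pairs inverted) and $t^2\binom{s}{2}$ overall, and one checks this ordering is consistent with $SAW_{s,t}$. For the minimum, the per-pair count cannot be zero: the sawblade covering relation $(j-1)t+2$ covers $(j+1)t$ forces certain cross-tooth orderings, and I expect that for each pair of teeth exactly $t^2-t+1$ cross-inversions are unavoidable while the remaining $t-1$ can be eliminated simultaneously across all pairs by a suitable extension (for instance the one whose associated Fuss-Catalan path is extremal). Summing gives minimum $(t^2-t+1)\binom{s}{2}$.

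\textbf{The main obstacle} will be verifying that the per-pair extremes can be achieved \emph{simultaneously} by a single global linear extension of $SAW_{s,t}$, rather than merely pairwise; the covering relations couple consecutive teeth, so I must exhibit explicit extensions attaining each bound and check they are genuine linear extensions. I would build the minimizing extension by interleaving adjacent teeth as tightly as the covers $(j-1)t+2 \succ (j+1)t$ allow, tracking that each adjacent pair contributes exactly $t^2-t+1$ and each non-adjacent pair also contributes $t^2-t+1$, so that the total telescopes correctly; dually, the maximizer is the tooth-block-reversed order. Once both explicit extensions are checked against the sawblade constraints and the per-pair inversion counts are confirmed, the two bounds follow immediately.
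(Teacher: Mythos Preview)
Your plan is correct and matches the paper's approach: both arguments use Theorem~\ref{thm:sawblade} to reduce to linear extensions of $SAW_{s,t}$, establish per-pair bounds on cross-tooth inversions via the forced covering relations, and then exhibit explicit extremizing extensions.

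Two small points of comparison. For the maximum, the paper counts \emph{coinversions} rather than inversions: since each tooth appears in increasing order, every extension has at least $s\binom{t}{2}$ non-inversions, so at most $\binom{st}{2}-s\binom{t}{2}=t^2\binom{s}{2}$ inversions; this is marginally cleaner than summing $t^2$ over pairs, but equivalent. For the minimum, your per-pair lower bound $t^2-t+1$ is exactly right (all $t$ elements of tooth $j$ must precede the $t-1$ non-root elements of tooth $j'$ for every $j>j'$, and the two roots are also ordered), but the extension realising it simultaneously is not an ``interleaving of adjacent teeth'': it is the extension that first lists the entire $t$th spine (all the roots $(s-1)t+1,(s-2)t+1,\ldots,1$ in that order) and then the remaining elements tooth by tooth from tooth $s$ down to tooth $1$. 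Your interleaving description is vague enough that it could lead you to a non-minimizing extension, so it is worth knowing this explicit choice.
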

\begin{proof}
To show the minimum number of inversions is $(t^2 - t + 1) \binom{s}{2}$, first note that by Theorem 5.1, each entry which is not on the $t$th tooth must form an inversion with all entries on higher (that is, lower-numbered) spines. 
In particular, each entry on the $j$th tooth that is not on the $t$th spine contributes $(j-1)(t-1)$ inversions, so the $j$th tooth contributes $(j-1)(t-1)^2$ inversions. 
Now note that each entry on the $n$th tooth forms an inversion with all of the entries on higher (that is, lower-numbered) teeth, so the $j$th entry on the $t$th tooth contributes $(j-1)t$ inversions. 
In total, then, each linear extension of $EN_{s,t}$ which avoids 1243 has at least $\sum_{j=1}^s ((j-1)(t-1)^2 + (j-1)t) = \binom{s}{2} (t^2 - t + 1)$ inversions.
Since the linear extension which begins with the entries on the $s$th spine has no other inversions, it must have exactly $\binom{s}{2} (t^2 - t + 1)$ inversions.
    
To show the maximum number of inversions is $t^2 \binom{s}{2}$, for each linear extension $\pi$ we consider pairs $i < j$ with $\pi(i) < \pi(j)$.
That is, we consider the noninversions or coinversions in $\pi$.
On each tooth there are $\binom{t}{2}$ such pairs, so no linear extension has more than $\binom{st}{2} - s \binom{t}{2} = t^2 \binom{s}{2}$ inversions.
Since the linear extension we obtain by traversing the teeth from $s$th to first has no other noninversions, it must have exactly $t^2 \binom{s}{2}$ inversions.
\end{proof}

Theorem \ref{thm:invmaxmin} captures a striking observation about the data in Table \ref{table:invmaxmin},
\begin{table}[ht]
\centering
\begin{tabular}{|c|c|c|c|c|c|c|c|} 
 \hline
 $s \setminus t$ & 1 & 2 & 3 & 4 & 5 & 6 \\ 
 \hline
 1 & 0 & 0 & 0 & 0 & 0 & 0 \\ 
 2 & 1 & 3-4 & 7-9 & 13-16 & 21-25 & 31-36 \\
 3 & 3 & 9-12 & 21-27 & 39-48 & 63-75 & 93-108 \\
 4 & 6 & 18-24 & 42-54 & 78-96 & 126-150 & 196-216 \\
 5 & 10 & 30-40 & 70-90 & 130-160 & 210-250 & 310-360 \\
 6 & 15 & 45-60 & 105-135 & 195-240 & 315-375 & 465-540 \\
 \hline
\end{tabular}
\caption{The range of inversion numbers (minimum-maximum) for linear extensions of $EN_{s,t}(1243)$.}
\label{table:invmaxmin}
\end{table}
which shows the minimum and maximum inversion numbers among linear extensions in $EN_{s,t}(1243)$ for various $s$ and $t$.
Namely, for linear extensions in $EN_{s,t}(1243)$, the minimum inversion number is the product of the inversion number of the linear extension in $EN_{s,1}(1243)$ and the minimum inversion number over linear extensions in $EN_{2,t}(1243)$. 
Similarly, the maximum inversion number is the product of the inversion number of the linear extension in $EN_{s,1}(1243)$ and the maximum inversion number over linear extensions in $EN_{2,t}(1243)$.
In other words, in Table \ref{table:invmaxmin} the entry in row $i$ and column $j$ is the product of the entry in row $i$ and column 1 with the entry in row $2$ and column $j$.

\section{Open Problems and Future Directions}
\label{sec:opfd}

The first and most fundamental open problem related to this work is to enumerate $NE_{s,t}(123)$ when $s \ge 3$ and $t \ge 3$.
See Table \ref{table:NEst123data} for the values of $|NE_{s,t}(123)|$ for some small $s$ and $t$.
Yakoubov \cite{combs} also has some open problems involving monotone forbidden patterns of length three, so it would also be interesting to connect $NE_{s,t}(123)$ with one or more of these problems.
Similarly, Levin, Pudwell, Riehl, and Sandberg \cite{heaps} have some open problems involving binary heaps avoiding monotone patterns, and it would be interesting to connect $NE_{s,t}(123)$ with one or more of these problems.

At the end of Section \ref{sec:bn} we noted that the set of linear extensions of $EN_{s,t}$ (or $NE_{s,t}$) is naturally in bijection with the set of standard tableaux of shape $t^s$, which we can enumerate with the classical hook length formula.
Later we showed that the set of linear extensions of $EN_{s,t}$ which avoid $1243$ is in bijection with the set of linear extensions of the sawblade poset $SAW_{s,t}$.
By rotating the Hasse diagram for $SAW_{s,t}$ through $3\pi/4$ radians clockwise and enclosing each vertex in a box, we see that the linear extensions of $SAW_{s,t}$ are in bijection with the ``standard tableaux'' of a new partition-type shape.
Figure \ref{fig:sawtableaux}
\begin{figure}[ht]
\centering
\setlength{\unitlength}{.2in}
\begin{picture}(10,3)
\multiput(0,0)(1,0){2}{\line(0,1){3}}
\put(2,2){\line(0,1){1}}
\multiput(3,1)(1,0){2}{\line(0,1){2}}
\put(5,1){\line(0,1){1}}
\multiput(6,0)(1,0){2}{\line(0,1){2}}
\multiput(8,0)(1,0){3}{\line(0,1){1}}
\put(0,3){\line(1,0){4}}
\put(0,2){\line(1,0){7}}
\put(0,1){\line(1,0){1}}
\put(3,1){\line(1,0){7}}
\put(0,0){\line(1,0){1}}
\put(6,0){\line(1,0){4}}
\end{picture}
\caption{A ``sawblade partition'' shape for $s = 3$ and $t = 4$.}
\label{fig:sawtableaux}
\end{figure}
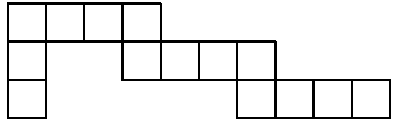
shows the ``sawblade partition'' shape for $s = 3$ and $t = 4$.
In view of this connection between linear extensions and this type of generalized standard tableaux, and because the number of linear extensions of $SAW_{s,t}$ is given by a simple ratio of products, we believe there is a hook length formula explaining these results.
However, we have not yet found such a formula.

At the end of Section \ref{sec:gentrees} we mentioned that $EN_{s,t}(12354)$ is in bijection with another set of generalized Catalan paths.
These paths are the sequences of $s$ $N_1$s, $s$ $N_2$s, and $(t-2)s$ $E$s such that the subsequence of $N_1$s and $N_2$s is a Catalan path (with $N_2$ playing the role of $E$), while the subsequence of $N_2$s and $E$s is a $t-1$-Fuss-Catalan path of semilength $s$.
These paths do not appear to have been previously studied, so it might be fruitful to explore their enumerative and combinatorial properties.
In addition, their form may suggest other interesting generalized Catalan paths.

In Section \ref{sec:statistics} we studied $P(\sigma)(q)$, the generating function with respect to inversion number for linear extensions of a rectangular poset $P$ which avoid $\sigma$, but there is much more to do in this direction.
For example, in most of our Section \ref{sec:statistics} results we had $|\sigma| = 3$, so it's natural to ask whether there are similar results for any $\sigma$ with $|\sigma| \ge 4$.
As a first step toward answering this question, we have found that computer generated data support the following conjecture, which we have verified for $t \le 9$.

\begin{conjecture}
For all $t \ge 1$, 
$$EN_{3,2t-1}(1243)(q) = q^{3(t^2-t+1)} [2t-1]_q [4t-1]_q.$$
\end{conjecture}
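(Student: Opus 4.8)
The plan is to refine the Fuss--Catalan generating tree of Section~\ref{sec:gentrees} with the inversion statistic, exactly as the proof of Theorem~\ref{thm:stq} refines the insertion argument of Theorem~\ref{thm:NE(213)=NE(132)}. By Theorem~\ref{thm:sawblade}, the set $EN_{3,T}(1243)$ (with $T = 2t-1$) is precisely the set of linear extensions of $SAW_{3,T}$, so it suffices to evaluate $\sum_\pi q^{\inv(\pi)}$ over these extensions. I would generate each $\pi$ by the insertion procedure of Theorem~\ref{thm:gen-tree-succession-rule}: to pass from $SAW_{s,T}$ to $SAW_{s+1,T}$, add $T$ to every entry, append $2,3,\ldots,T$ in increasing order at the right end (forced by the covering relations), and place the new minimum $1$ in one of the positions indexed by $label(\pi)$.

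The first real step is to record how many inversions each insertion introduces, in the spirit of \eqref{eqn:invrecurrence}. Relabelling preserves inversions; the appended block $2,\ldots,T$ contributes a fixed $(T-1)sT$ inversions; and placing $1$ so as to produce a child of label $\ell$ contributes $(s+1)T-\ell-1$ further inversions. This yields $\inv(\pi') = \inv(\pi) + (T-1)sT + (s+1)T - \ell - 1$, and hence a bivariate generating function $F_s(q,y) = \sum_\pi q^{\inv(\pi)} y^{label(\pi)}$ whose level-$s$-to-$(s+1)$ transition sends $y^{j}$ to $q^{(T-1)sT+(s+1)T-1}\sum_{\ell=T-1}^{j+T-1}(y/q)^\ell$. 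Applying this transition three times starting from the root and then setting $y=1$ expresses $EN_{3,T}(1243)(q) = F_3(q,1)$ as a finite double sum; summing the two geometric series collapses it to a closed form whose leading power of $q$ is the minimum inversion number guaranteed by Theorem~\ref{thm:invmaxmin}, leaving an explicit polynomial factor of degree $3(T-1)$.

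The decisive and hardest step is to prove that this polynomial factor equals the product of $q$-integers in the conjecture when $T$ is odd. I would attack this by clearing denominators in the collapsed geometric sum to write the factor as a single rational function and then factoring its numerator, aiming to exhibit a cancellation governed by the parity of $T$ (so that $q+1$ divides at the right place) that separates the result into the two advertised cyclotomic-type blocks. I expect the inversion bookkeeping and the double summation to be routine, and this final factorization --- together with a conceptual explanation of why odd length is essential --- to be the genuine obstacle, since the parity restriction strongly suggests the closed product form is delicate rather than generic. As a cross-check and an alternative route, one could instead fix the tooth length and build a $q$-weighted transfer matrix in the spine parameter, paralleling Section~\ref{sec:transfermatrices}; reading off its $s=3$ output should reproduce the same polynomial and may render the factorization transparent.
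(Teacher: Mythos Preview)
The paper does not prove this statement; it is labelled a \emph{conjecture} and is accompanied only by the remark that it has been checked by computer for $t\le 9$. There is therefore no proof in the paper against which to compare your proposal.

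Your proposal is itself a plan rather than a proof. The first two stages---tracking inversions through the Fuss--Catalan generating tree and collapsing the resulting double geometric sum---are correct and can be carried out exactly as you describe. But the step you call ``decisive and hardest,'' namely factoring the resulting degree-$3(T-1)$ polynomial as $[2t-1]_q[4t-1]_q$, is not actually done; you only sketch how you might attempt it. So the proposal leaves the conjecture exactly where the paper left it.

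There is also a genuine obstruction you should be aware of. Carrying out your own double sum for $T=2t-1=3$ (that is, $t=2$) yields
\[
EN_{3,3}(1243)(q)=q^{21}+q^{22}+2q^{23}+2q^{24}+3q^{25}+2q^{26}+q^{27},
\]
whose coefficient sequence $1,1,2,2,3,2,1$ is not palindromic and hence cannot equal $q^{a}[b]_q[c]_q$ for any integers $a,b,c$. Indeed, already at $q=1$ the conjectured right side gives $(2t-1)(4t-1)$, whereas Corollary~\ref{cor:1243 formula} gives $|EN_{3,2t-1}(1243)|=(2t-1)(3t-2)$, and these disagree for every $t\ge 1$. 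Thus the printed formula appears to contain an error, and your factorization step would fail not because your method is wrong but because the target is. Before investing effort in a proof, you should first determine what the intended identity is.
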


More generally, we conjecture that there is a natural statistic $\istat$ on the set $FS_{s,t}$ of $t$-Fuss-Catalan paths of semilength $s$ such that the $q$-Fuss-Catalan number given by
$$\sum_{p \in FC_{s,t}} q^{\istat(p)}$$
satisfies $q$-analogues of \eqref{eqn:Crecurrence} and \cite[Eq.~(7.68)]{Graham}, and that $EN_{s,t}(1243)(q) = q^{f(s,t)} C(q)$ for an appropriate function $f(s,t)$.
In addition, we expect $\istat$ to be closely related to $\inv$.

Turning from $1243$ to $2143$, computer generated data also support our next conjecture, which we have verified for $s \le 10$.
This conjecture is a $q$-analogue of Theorem \ref{thm:2143formulas}(ii).

\begin{conjecture}
\label{conj:2qinv}
For all $s \ge 1$,
$$EN_{s,2}(2143)(q) = q^{(2s-1)(s-1)} (1+q)^{s-1}.$$
\end{conjecture}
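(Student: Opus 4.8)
The plan is to prove the identity by induction on $s$, reducing it to the multiplicative recurrence
$$EN_{s,2}(2143)(q) = q^{4s-5}(1+q)\, EN_{s-1,2}(2143)(q)$$
via an insertion argument in the spirit of the proof of Theorem~\ref{thm:stq}. The base case $s=1$ is immediate, since $EN_{1,2}$ has the single linear extension $1\,2$ with no inversions, so $EN_{1,2}(2143)(q) = 1$. For the inductive step I would build every element of $EN_{s,2}(2143)$ from an element of $EN_{s-1,2}(2143)$ by adjoining the first tooth $\{1,2\}$. Concretely, given $\pi' \in EN_{s-1,2}(2143)$ on the labels $\{1,\dots,2s-2\}$, first increase every entry by $2$ to obtain a sequence $\hat\pi'$ on $\{3,\dots,2s\}$; since increasing all labels by a constant preserves relative order, $\hat\pi'$ is a linear extension of $EN_{s,2}$ restricted to teeth $2,\dots,s$ and $\inv(\hat\pi') = \inv(\pi')$. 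One then inserts the two new smallest labels $1$ and $2$.

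The heart of the argument is to show that there are exactly two legal insertions. The label $2$ is the global maximum of $EN_{s,2}$, so it must occupy the last position. For the label $1$, the covering relations of $EN_{s,2}$ give $3 \prec 1$, and since the odd labels form a decreasing spine terminating in $3$, this already forces $1$ after every odd label; Theorem~\ref{thm:EN2143sawblade} applied to the new tooth contributes exactly one further constraint, namely the case $j=3$, i.e. $6 \prec 1$, while the avoidance constraints for $4 \le j \le s$ are inherited from $\pi'$ under the shift. Thus $1$ may go in any slot that follows both $3$ and $6$. The key lemma is that in $\hat\pi'$ the later of $3$ and $6$ always sits in the penultimate position $2s-3$, immediately before $4$. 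This follows because $4$ is the shifted global maximum of $\hat\pi'$, whose lower covers in $EN_{s,2}$ are precisely $3$ (its tooth) and $6$ (its spine); and removing the unique maximum of any poset leaves exactly its lower covers as the new maximal elements, so $3$ and $6$ are the only candidates for the slot just before $4$. Hence $1$ has exactly the two admissible positions ``just before $4$'' and ``just after $4$,'' which reproves the factor of $2$ in Theorem~\ref{thm:2143formulas}(ii).

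It then remains to track inversions in the two cases. Since $1$ is the global minimum and $2$ the second smallest label, the appended $2$ creates $2s-2$ inversions (one with each of the $2s-2$ entries of $\hat\pi'$, and none with $1$), while $1$ creates one inversion with each entry to its left: $2s-3$ when inserted just before $4$ and $2s-2$ when inserted just after $4$. Therefore the two children of $\pi'$ have inversion numbers $\inv(\pi') + (4s-5)$ and $\inv(\pi') + (4s-4)$. Summing $q^{\inv}$ over both children and then over all $\pi' \in EN_{s-1,2}(2143)$ gives the displayed recurrence, and iterating it with the simplification $\sum_{k=2}^{s}(4k-5) = (2s-1)(s-1)$ yields $EN_{s,2}(2143)(q) = q^{(2s-1)(s-1)}(1+q)^{s-1}$, as claimed.

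The step I expect to be the main obstacle is the key lemma fixing the penultimate entry of $\hat\pi'$: one must verify carefully that $3$ and $6$ are the only lower covers of $4$ in $EN_{s,2}$ and invoke the general fact that deleting a poset's unique maximum leaves precisely its lower covers maximal, so that exactly two insertion slots for $1$ survive. Everything else is routine, although one should confirm explicitly that the shift by $2$ transports the avoidance constraints for $4 \le j \le s$ correctly and introduces only the single new constraint $6 \prec 1$.
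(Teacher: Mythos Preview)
The paper does not prove this statement: it is listed as a conjecture, verified only by computer for $s \le 10$. So there is no ``paper's own proof'' to compare against. Your argument, however, is a valid proof and settles the conjecture.

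Your approach is sound. The reduction via Theorem~\ref{thm:EN2143sawblade} is exactly right: for $t=2$ the conditions become $2j \prec 2j-5$ for $3 \le j \le s$, so shifting $\pi' \in EN_{s-1,2}(2143)$ up by $2$ transports the constraints for $j=4,\dots,s$ and introduces only the new one $6 \prec 1$. The key lemma is also correct: $4$ is the unique maximum of the shifted poset, and in any finite poset with a unique maximum $m$, the maximal elements of $P \setminus \{m\}$ are precisely the lower covers of $m$; hence the penultimate entry of $\hat\pi'$ lies in $\{3,6\}$. This forces the later of $3$ and $6$ into position $2s-3$, leaving exactly the two slots you describe for $1$. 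The inversion bookkeeping and the telescoping sum $\sum_{k=2}^{s}(4k-5) = (2s-1)(s-1)$ are routine and correct.

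One small edge case deserves explicit mention: when $s=2$ the element $6$ does not exist in $\hat\pi'$, the range $3 \le j \le s$ in Theorem~\ref{thm:EN2143sawblade} is empty, and the only lower cover of $4$ is $3$. Your ``key lemma'' as phrased (``the later of $3$ and $6$'') does not literally apply. The conclusion is unchanged --- $3$ sits in position $2s-3 = 1$, and $1$ still has exactly two admissible positions --- but you should either treat $s=2$ as a separate base case or phrase the lemma as ``the penultimate entry of $\hat\pi'$ is always a lower cover of $4$, and every lower cover of $4$ lies in $\{3,6\}$.'' With that adjustment the proof is complete.
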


\noindent
Our next conjecture, which we have verified for $s \le 9$, is a $q$-analogue of Theorem \ref{thm:2143formulas}(iii).

\begin{conjecture}
\label{conj:Fqinv}
For all $s \ge 1$,
$$EN_{s,3}(2143)(q) = q^{9\binom{s}{2}} F_s\left(\frac{1}{q}\right),$$
where $F_s(q)$ is defined by $F_0(q) = 1$, $F_1(q) = 1$, and $F_s(q) = (1+q+2q^2)F_{s-1}(q) + q^3 F_{s-2}(q)$ for $s \ge 2$.
\end{conjecture}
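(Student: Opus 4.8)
The plan is to prove Conjecture \ref{conj:Fqinv} by inserting inversion weights into the transfer matrix recurrence already used for Theorem \ref{thm:2143formulas}(iii). Since the entries of each tooth of $EN_{s,3}$ occur in increasing order in every linear extension (as in the proof of Theorem \ref{thm:EN2143sawblade}), no inversion has both of its entries in one tooth, and $\inv(\pi)$ is the sum, over tooth pairs $j_1 < j_2$, of the number of $j_2$-tooth entries preceding a $j_1$-tooth entry. The maximum value of $\inv$ on $EN_{s,3}(2143)$ is $t^2\binom{s}{2} = 9\binom{s}{2}$, attained by the linear extension that traverses the teeth from last to first (by the same coinversion count as in the proof of Theorem \ref{thm:invmaxmin}), so the conjecture is equivalent to $F_s(q) = \sum_{\pi\in EN_{s,3}(2143)} q^{\,9\binom{s}{2} - \inv(\pi)}$, a generating function for the deficit of $\inv$ below its maximum; the claimed identity $EN_{s,3}(2143)(q) = q^{9\binom{s}{2}} F_s(1/q)$ is then just the substitution $\inv = 9\binom{s}{2} - \text{deficit}$.

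The engine is a $q$-analogue of Theorem \ref{thm:atksrecurrence}. Re-running the explicit merge in its proof — replacing the tail $N_{s-2}N_{s-1}^{\,j}$ of a dimension-$(s-1)$ zipper by the image of a $j,k$-Catalan path (Definition \ref{defn:jkcatpath}) — I would record the inversions created when the new tooth is interleaved. These split into a part coming from teeth outside the interaction range of the recurrence, which are frozen in the larger-entry-first order and so contribute nothing to the deficit (this is exactly why such teeth are absent from Theorem \ref{thm:atksrecurrence}), and a local part governed by the new interleaving. Tracking the local deficit should promote the numbers $b_{j,k}(3)$ to polynomials $\hat{b}_{j,k}(q)$ satisfying a $q$-analogue of the recurrence \eqref{eqn:brecurrence} of Theorem \ref{thm:brecurrence}, with the extra powers of $q$ recording how the deficit changes under the $E$- and $N$-removals in that proof. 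If $\hat{A}_{s,k}(q) = \sum q^{\,9\binom{s}{2}-\inv(\pi)}$ is summed over the $\pi$ whose zipper ends in $N_{s-1}N_s^{\,k}$, this yields $\hat{A}_{s,k}(q) = \sum_{j=1}^3 \hat{b}_{j,k}(q)\,\hat{A}_{s-1,j}(q)$ with a constant matrix $\hat{B}_3(q)$, and $F_s(q) = \sum_{k=1}^3 \hat{A}_{s,k}(q)$.

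With $\hat{B}_3(q)$ in hand I would extract the recurrence as in the classical case, where $\det(I_3 - xB_3) = 1-4x-x^2$ has degree two rather than three because $B_3$ has two equal rows. I expect $\hat{B}_3(q)$ to inherit this degeneracy, so $\det(I_3 - x\hat{B}_3(q))$ carries a trivial factor and the weighted transfer matrix method gives an order-two recurrence for $F_s(q)$ with coefficients in $\mathbb{Z}[q]$. It then remains to check that this recurrence is exactly $F_s(q) = (1+q+2q^2)F_{s-1}(q) + q^3 F_{s-2}(q)$ and that the initial data agree; the latter follows from $F_0(q) = F_1(q) = 1$ together with the direct computation $EN_{2,3}(2143)(q) = q^6 + 2q^7 + q^8 + q^9 = q^9 F_2(1/q)$.

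The main obstacle is the inversion bookkeeping in the merge: producing the exact polynomials $\hat{b}_{j,k}(q)$ and confirming that the deficit is genuinely additive across tooth insertions, so that the transfer matrix has constant entries. The delicate point is orientation — one must verify, under the tooth-to-symbol correspondence used for the Catalan zipper bijection, that the frozen teeth really sit in the larger-entry-first order, since this is exactly what forces their deficit contribution to vanish and makes the prefactor $q^{9\binom{s}{2}}$ combine with the reversal $F_s(1/q)$ as claimed. A secondary difficulty is the transient behavior of the first teeth, where the banded structure has not reached steady state; this affects only the base cases but must be resolved to fix the constant terms in the recurrence.
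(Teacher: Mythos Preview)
The paper does not prove this statement: it is stated as a conjecture, verified only numerically for $s \le 9$. There is therefore no proof to compare against, and what you have written is a proof \emph{plan}, not a proof --- you explicitly flag the missing pieces (``should promote'', ``I expect'', ``The main obstacle is'').

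That said, your plan is correct and can be completed without difficulty. The additivity of the deficit that you are worried about really does hold: when the new tooth (the symbols $N_s$, carrying the smallest $t$ values) is merged into a dimension-$(s-1)$ zipper as in the proof of Theorem~\ref{thm:atksrecurrence}, every old entry is larger than every new entry, so the only new coinversions are pairs $(N_s,N_{s-1})$ with the $N_s$ to the left; this count depends only on the chosen $j,k$-Catalan path, not on the rest of the zipper, and the tooth--tooth count $9(s-1)$ combines with $9\binom{s-1}{2}$ to give exactly $9\binom{s}{2}$. Computing the coinversion-weighted entries directly gives
\[
\hat{B}_3(q)=\begin{pmatrix} q^2 & q & 1\\ q^2+q^3 & q+q^2 & 1\\ q^2+q^3 & q+q^2 & 1\end{pmatrix},
\]
whose last two rows coincide, so $\det(I - x\hat{B}_3(q)) = 1 - (1+q+2q^2)x - q^3 x^2$, which is the conjectured recurrence. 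The transient is also harmless: the initial vector $\hat{A}_1 = (0,0,1)^T$ lies in the row space of $\hat{B}_3(q)$, so the order-two recurrence for $\hat{A}_s$ (hence for $F_s$) already holds from $s=3$, and the case $s=2$ is your direct check $EN_{2,3}(2143)(q)=q^6+2q^7+q^8+q^9$.

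So the gap is not conceptual but computational: you identified the right mechanism and the right obstacles, but did not carry out the weighting of $B_3$ or the determinant calculation that actually pins down the recurrence. Filling those in turns your sketch into a proof of the conjecture, which the paper itself does not supply.
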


The polynomials $F_s(q)$ have a variety of interesting properties in addition to the fact that $F_s(1)$ is the Fibonacci number $F_{3s-1}$.
For example, when $s \ge 2$ it's not difficult to prove by induction that $F_s(q)$ has degree $2s-1$, the coefficient of $q^{2s-1}$ is $2^{s-2}$, the constant term is $F_s(0) = 1$, the coefficient of $q$ is $s-1$, and the coefficient of $q^2$ is $\frac{s(s+3)}{2}$.
In addition, we make the following conjectures, all of which we have verified for $s \le 10$.

\begin{conjecture}
For all $s \ge 2$, the coefficient of $q^3$ in $F_s(q)$ is the binomial transform of the sequence obtained by interleaving $n+1$ and $2n+1$.
This is OEIS sequence A098156.
\end{conjecture}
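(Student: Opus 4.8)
The plan is to turn the problem into a routine recurrence calculation and then recognize the resulting sequence as the stated binomial transform. Write $[q^i]F_s$ for the coefficient of $q^i$ in $F_s(q)$, and set $a_s = [q^3]F_s$. Comparing coefficients of $q^0,q^1,q^2,q^3$ on both sides of $F_s(q) = (1+q+2q^2)F_{s-1}(q) + q^3 F_{s-2}(q)$ yields the system
\begin{align*}
[q^0]F_s &= [q^0]F_{s-1}, \\
[q^1]F_s &= [q^1]F_{s-1} + [q^0]F_{s-1}, \\
[q^2]F_s &= [q^2]F_{s-1} + [q^1]F_{s-1} + 2[q^0]F_{s-1}, \\
a_s &= a_{s-1} + [q^2]F_{s-1} + 2[q^1]F_{s-1} + [q^0]F_{s-2},
\end{align*}
where the final term records the contribution of $q^3 F_{s-2}$. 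The coefficients of $q^0$, $q$, and $q^2$ recorded before the conjecture supply closed forms for the first three rows, so I would simply quote those.

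Substituting them into the last line collapses $a_s$ to a first-order linear recurrence $a_s = a_{s-1} + P(s)$, with $P$ an explicit quadratic in $s$. Telescoping from the base value (one checks $a_2 = 1$ directly from $F_2 = 1+q+2q^2+q^3$) produces $a_s$ as an explicit cubic polynomial in $s$, which I would rewrite in the binomial-coefficient basis as $a_s = \sum_{j} \lambda_j \binom{s-2}{j}$ for integers $\lambda_0,\dots,\lambda_3$. Written this way $a_s$ is, by definition, the value at index $s-2$ of the binomial transform of the source sequence $(\lambda_0,\lambda_1,\lambda_2,\lambda_3,0,0,\dots)$, since $\sum_j \binom{m}{j}\lambda_j$ is exactly the $m$th term of the binomial transform of $(\lambda_j)$.

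It then remains to identify this source sequence with the one obtained by interleaving $n+1$ and $2n+1$, whose generating function (splitting into even- and odd-indexed parts) is $\frac{1+x+x^3}{(1-x^2)^2}$. Here I would use that the binomial transform of a sequence with generating function $C(x)$ has generating function $\frac{1}{1-x}\,C\!\left(\frac{x}{1-x}\right)$; I would simplify this rational function and compare its coefficients against the cubic closed form for $a_s$ found above. Because each side satisfies a constant-coefficient linear recurrence, the comparison ultimately reduces to matching the two rational generating functions, equivalently to checking agreement on an initial segment of the appropriate length against a common recurrence.

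The step I expect to be the main obstacle is precisely this last identification. The delicate points are pinning down the index alignment exactly — the coefficient sequence starts at $s=2$, so the transform index is $s-2$ — and verifying that the combinatorial description (interleaving $n+1$ and $2n+1$) genuinely reproduces the source coefficients $(\lambda_j)$ emerging from the recurrence. Executing the generating-function comparison carefully is what confirms (or, if the off-by-one bookkeeping in the quoted $q^2$-coefficient is not tracked consistently, corrects) the claimed cross-reference; once the two generating functions are matched, the remainder is routine verification.
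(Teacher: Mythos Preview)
The paper offers no proof of this statement --- it is listed as a conjecture, verified numerically for $s\le 10$ --- so there is nothing to compare against. That said, your plan contains a genuine gap at exactly the point you flag as delicate.

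Your derivation that $a_s=[q^3]F_s$ is a cubic polynomial is sound. Writing $a_s=\sum_j\lambda_j\binom{s-2}{j}$ one finds $(\lambda_0,\lambda_1,\lambda_2,\lambda_3)=(1,5,5,1)$, with $\lambda_j=0$ for $j\ge 4$; this makes $(a_{m+2})_{m\ge 0}$ the binomial transform of the \emph{finite} sequence $(1,5,5,1,0,0,\dots)$. But you then propose to ``identify this source sequence with the one obtained by interleaving $n+1$ and $2n+1$,'' i.e.\ with $(1,1,2,3,3,5,4,7,\dots)$. That identification is impossible: the binomial transform is invertible, so two distinct source sequences have distinct transforms. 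More concretely, the interleaved sequence grows linearly, and the binomial transform of any sequence with $c_n=\alpha n+\beta$ satisfies $\sum_n\binom{m}{n}c_n=\alpha\, m\,2^{m-1}+\beta\,2^m$, which grows exponentially. Hence the binomial transform of the interleaved sequence cannot agree with the cubic $(a_s)$ for all $s$. If you actually carry out your proposed generating-function comparison, $\frac{1}{1-x}\,C\!\left(\frac{x}{1-x}\right)$ with $C(x)=\frac{1+x+x^3}{(1-x^2)^2}$ has poles at $x=\tfrac12$, reflecting this exponential growth, whereas the generating function of a cubic-in-$s$ sequence has all its poles at $x=1$.

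So your outline would not establish the conjecture as literally phrased; it would instead show that the literal reading (standard interleaving, standard binomial transform, obvious index alignment) is inconsistent with the cubic you compute. The actual content of the conjecture is presumably the cross-reference to A098156, and you would first need to pin down the precise OEIS definition and offset before any of the routine verification you describe can begin.
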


\begin{conjecture}
For all $s \ge 2$, the coefficient of $q^{2s-2}$ in $F_s(q)$ the number of compositions of $s+9$ into $s$ parts, none of which is 2 or 3.
This is OEIS sequence A134465.
\end{conjecture}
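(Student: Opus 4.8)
The plan is to reduce the conjecture to an agreement of two explicit sequences in $s$, so let me write $c_s := [q^{2s-2}]F_s(q)$ for the coefficient in question. First I would record the two facts already noted in the surrounding text, namely that $\deg F_s(q) = 2s-1$ and that the leading coefficient of $F_s(q)$ is $2^{s-2}$; both follow by a one-line induction from the defining recurrence $F_s(q) = (1+q+2q^2)F_{s-1}(q) + q^3 F_{s-2}(q)$. Reading off the coefficient of $q^{2s-2}$ on both sides of this recurrence then produces a recurrence for $c_s$: in $(1+q+2q^2)F_{s-1}$ the summand $1\cdot F_{s-1}$ contributes nothing since $\deg F_{s-1} = 2s-3$, the summand $q\cdot F_{s-1}$ contributes the leading coefficient $2^{s-3}$ of $F_{s-1}$, and the summand $2q^2\cdot F_{s-1}$ contributes $2c_{s-1}$, while $q^3 F_{s-2}$ contributes the leading coefficient $2^{s-4}$ of $F_{s-2}$. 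Collecting these gives
\[
c_s = 2c_{s-1} + 3\cdot 2^{s-4}
\]
for $s \ge 4$, with the small cases handled by hand. Solving this first-order inhomogeneous recurrence yields the closed form $c_s = (3s+1)2^{s-4}$ for $s \ge 3$; equivalently, $c_s$ satisfies the constant-coefficient recurrence $c_s = 4c_{s-1} - 4c_{s-2}$, whose characteristic polynomial is $(x-2)^2$.

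The second step is to establish the same recurrence, together with the matching initial values already verified for $s \le 9$, for the sequence A134465. Since $c_s = (3s+1)2^{s-4}$ grows like $s\,2^s$, the first sanity check is that the combinatorial model underlying A134465 is organized so that its counts also grow exponentially in $s$; in particular the structure cannot be one that forces merely polynomial growth. Granting the correct model, I would compute the ordinary generating function $\sum_s c_s x^s$ of A134465 and verify that it is a rational function with denominator $(1-2x)^2$, which is exactly equivalent to the recurrence $c_s = 4c_{s-1} - 4c_{s-2}$ found above. Comparing the two sequences on the finitely many initial terms then completes the proof.

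The hard part will be the combinatorial side and the reconciliation of conventions. The generating function for a single part avoiding $2$ and $3$ is $(x - x^2 + x^4)/(1-x)$, and the delicate point is to arrange the count so that the relevant series expansion collapses to the clean denominator $(1-2x)^2$ rather than to a higher-order or merely holonomic recurrence; confirming this collapse, and matching it to the index $s$ in the recurrence for $c_s$, is where I expect the real obstacle to lie, since the naive reading of the combinatorial description must be interpreted with care to reproduce exponential rather than polynomial growth. Should the generating-function route prove awkward, I would instead try to prove the closed form $c_s = (3s+1)2^{s-4}$ for A134465 directly, by exhibiting a bijection between its objects and a set built from one of $3s+1$ distinguished choices together with $s-4$ independent binary choices; agreement with $[q^{2s-2}]F_s(q)$ is then immediate.
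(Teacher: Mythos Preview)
The paper does not prove this statement: it is listed as a conjecture in Section~\ref{sec:opfd}, supported only by a computer check for $s\le 10$.  So there is no proof in the paper to compare against, and your proposal is an attempt to \emph{settle} the conjecture, not to reprove a known argument.

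Your first half is correct.  From the recurrence $F_s(q)=(1+q+2q^2)F_{s-1}(q)+q^3F_{s-2}(q)$ together with $\deg F_{s}(q)=2s-1$ and leading coefficient $2^{s-2}$, extracting the coefficient of $q^{2s-2}$ indeed gives
\[
c_s \;=\; 2c_{s-1} + 2^{s-3} + 2^{s-4} \;=\; 2c_{s-1} + 3\cdot 2^{s-4}\qquad (s\ge 4),
\]
and together with the initial values one obtains the closed form $c_s=(3s+1)\,2^{s-4}$ for $s\ge 3$ (and $c_2=2$).  This part constitutes the substantive mathematics of any proof.

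Your caution about the second half is fully warranted, and in fact the obstacle you anticipate is real rather than merely technical.  Read literally, the number of compositions of $s+9$ into exactly $s$ positive parts, none equal to $2$ or $3$, is a \emph{cubic} polynomial in $s$: subtracting $1$ from each part one must distribute the fixed surplus $9$ among $s$ positions using values in $\{0,3,4,5,\dots\}$, and a direct count gives
\[
s + 2s(s-1) + \binom{s}{3} \;=\; \frac{s(s^{2}+9s-4)}{6},
\]
which yields $6,16,32,55,86,\ldots$ for $s=2,3,4,5,6$, not $2,5,13,32,76,\ldots$.  So the combinatorial description in the conjecture, taken at face value, does \emph{not} match $c_s$; the discrepancy appears already at $s=2$ and $s=3$.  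This is not a gap in your argument but an issue with the conjecture's wording: the paper's verification for $s\le 10$ was presumably made against the numerical OEIS entry A134465 itself, and the verbal ``compositions'' description quoted here appears to be a mistranscription or a misaligned reading of that OEIS entry (A134465 is in fact a number triangle, so one must specify which row, column, or diagonal is meant).

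In short: your formula $c_s=(3s+1)2^{s-4}$ resolves the analytic side of the conjecture completely.  What remains is not a mathematical step but a bibliographic one---pinning down precisely which sequence inside A134465 the authors intended, and recording that it satisfies the recurrence $a_s=4a_{s-1}-4a_{s-2}$ with matching initial data.  Your plan to do this via the generating function with denominator $(1-2x)^2$ is the right one, once the intended sequence is identified.
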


\begin{conjecture}
For all $s \ge 2$, the coefficient of $q^{s+2}$ in $F_s(q)$ is the number of jumps in all binary trees with $s$ edges.
This is OEIS sequence A127531.
\end{conjecture}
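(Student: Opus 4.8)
The plan is to convert the defining recurrence for the polynomials $F_s(q)$ into a single rational generating function and then extract the coefficient of $q^{s+2}$ as a diagonal. Writing $F(q,x)=\sum_{s\ge 0}F_s(q)x^s$, the recurrence $F_s(q)=(1+q+2q^2)F_{s-1}(q)+q^3F_{s-2}(q)$ together with $F_0(q)=F_1(q)=1$ gives, after the usual manipulation,
$$F(q,x)=\frac{1-(q+2q^2)x}{1-(1+q+2q^2)x-q^3x^2}.$$
The quantity we want is the diagonal-type coefficient $d(s):=[q^{s+2}]F_s(q)=[q^{s+2}x^s]F(q,x)$, and the first task is to produce a closed form for its generating function $\Delta(y):=\sum_{s\ge 0}d(s)\,y^s$.

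To extract $\Delta(y)$ I would use the residue form of diagonal extraction: since $d(s)=\frac{1}{2\pi i}\oint F_s(q)\,q^{-(s+3)}\,dq$, summing over $s$ gives $\Delta(y)=\frac{1}{2\pi i}\oint q^{-3}F(q,y/q)\,dq$, the contour being a small circle about $q=0$ with $|y|$ small. Clearing denominators, $F(q,y/q)=\dfrac{q(1-y)-2q^2y}{-y+q(1-y)-q^2y(2+y)}$, so the integrand has poles at $q=0$ and at the two roots of the kernel $y(2+y)q^2-(1-y)q+y=0$. As $y\to 0$ exactly one kernel root $q_-(y)$ tends to $0$, so $\Delta(y)$ is the sum of the residues at $q=0$ and at $q=q_-(y)$, in the spirit of the kernel method used in the proof of Theorem~\ref{thm:gen-tree-func}. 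The decisive simplification is that the discriminant of the kernel factors,
$$(1-y)^2-4y^2(2+y)=1-2y-7y^2-4y^3=(1+y)^2(1-4y),$$
so $q_-(y)=\dfrac{(1-y)-(1+y)\sqrt{1-4y}}{2y(2+y)}$ and hence $\Delta(y)$ is a rational function of $y$ plus a rational multiple of $(1+y)\sqrt{1-4y}$. In particular $\Delta(y)$ is an algebraic, Catalan-type generating function, and a short computation confirms that the polar parts of the two residues cancel, leaving $\Delta(y)=2y^3+13y^4+64y^5+\cdots$, consistent with the conjectured values.

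The final step is to match $\Delta(y)$ with the generating function of OEIS sequence A127531. I would take the known generating function (or defining recurrence) for the number of jumps in all binary trees with $s$ edges---which, since binary trees with $s$ edges are counted by $C_s$ and the jump count is a cumulative tree statistic, is itself a Catalan-type series built from $\sqrt{1-4y}$---and verify that the two closed forms agree. A more robust variant, avoiding any delicate simplification of the square-root expressions, is to note that because $\Delta(y)$ is algebraic it is $P$-recursive, so $d(s)$ satisfies a linear recurrence with polynomial coefficients in $s$; one then checks that A127531 satisfies the same recurrence and agrees on enough initial terms. I expect the main obstacle to be twofold: first, fixing the precise combinatorial definition of a ``jump'' and casting the generating function for A127531 in a form directly comparable to $\Delta(y)$; and second, the residue bookkeeping---correctly identifying which poles are ``small'' and carrying out the algebra that reduces $\Delta(y)$ to a clean expression in $(1+y)\sqrt{1-4y}$. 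Once both generating functions are in hand, the comparison itself is routine.
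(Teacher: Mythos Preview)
This statement is a \emph{conjecture} in the paper; the authors do not prove it and offer no argument beyond numerical verification for $s\le 10$. So there is no paper proof to compare against, and any correct argument you give would be new.

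Your plan is sound in outline and the key algebraic observation is right. The bivariate generating function
\[
F(q,x)=\frac{1-(q+2q^2)x}{1-(1+q+2q^2)x-q^3x^2}
\]
is correct, the contour-integral setup for $\Delta(y)=\sum_s [q^{s+2}]F_s(q)\,y^s$ is standard, and your discriminant factorization $(1-y)^2-4y^2(2+y)=(1+y)^2(1-4y)$ is both correct and exactly the sort of miracle that makes the method land: it forces $\Delta(y)$ to be algebraic in $\sqrt{1-4y}$ alone, which is what one needs to match a Catalan-type OEIS generating function.

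That said, what you have written is a plan, not a proof, and the two obstacles you name are genuine gaps rather than routine bookkeeping. First, you never actually compute $\Delta(y)$: the residue at $q=0$ is a pole of order~$2$ and contributes a Laurent tail in $y$ (its $q^2$-coefficient is $-\tfrac{1-2y-y^2}{y^2}$), so the cancellation with the residue at $q_-(y)$ has to be carried out explicitly before you have anything to compare. Second, you do not produce a closed form for the A127531 generating function; ``number of jumps in all binary trees with $s$ edges'' needs to be pinned down (which notion of binary tree, which notion of jump) and turned into an explicit algebraic series. Until both closed forms are on the page and shown equal---or, as you suggest, until a common $P$-recurrence is exhibited and enough initial values checked---the argument is incomplete. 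If you finish those two computations, you will have proved the conjecture.
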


\begin{conjecture}
For all $s \ge 2$, the coefficient of $q^s$ is given by OEIS sequence A072547.
\end{conjecture}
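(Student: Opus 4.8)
Because the polynomials $F_s(q)$ are defined purely by the recurrence of Conjecture~\ref{conj:Fqinv}, this statement is a self-contained claim about that polynomial sequence, independent of whether $F_s(q)$ really computes $EN_{s,3}(2143)(q)$, and I would attack it with generating functions. First I would encode the recurrence in the bivariate generating function $F(x,q) = \sum_{s \ge 0} F_s(q) x^s$. Multiplying $F_s(q) = (1+q+2q^2)F_{s-1}(q) + q^3 F_{s-2}(q)$ by $x^s$, summing over $s \ge 2$, and using $F_0 = F_1 = 1$ gives
\[
F(x,q) = \frac{1 - (q + 2q^2)x}{1 - (1+q+2q^2)x - q^3 x^2}.
\]
The quantity of interest is $c_s := [q^s] F_s(q)$, which is exactly the diagonal coefficient $[x^s q^s] F(x,q)$, so the ordinary generating function $D(t) = \sum_{s \ge 0} c_s t^s$ is the diagonal generating function of $F(x,q)$.

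Next I would extract this diagonal by the standard residue method: writing $q = t/x$ and integrating over a small circle, $D(t) = \frac{1}{2\pi i} \oint F(x, t/x)\,\frac{dx}{x}$. After clearing denominators the integrand has a simple pole at $x = 0$ together with the poles of a quadratic factor in $x$ whose discriminant is $1 - 2t - 7t^2 - 4t^3$. The key simplification, and the step I would verify first, is that this discriminant factors as $(1+t)^2(1-4t)$, so that only the Catalan radical $\sqrt{1-4t}$ enters. As $t \to 0$ one root of the quadratic tends to $0$ and the other to $1$, so exactly the pole at $x=0$ and the small root lie inside the contour; summing these two residues and simplifying should yield
\[
D(t) = \frac{1}{2+t}\left(1 + \frac{1-t}{\sqrt{1-4t}}\right),
\]
equivalently the clean algebraic relation $(1-4t)\bigl((2+t)D(t) - 1\bigr)^2 = (1-t)^2$. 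A quick series check confirms $D(t) = 1 + 2t^2 + 6t^3 + 22 t^4 + 80 t^5 + \cdots$, matching $c_2 = 2$, $c_3 = 6$, $c_4 = 22$, $c_5 = 80$; the shift in the conjecture's index (it asserts a value for each $s \ge 2$) is accounted for by $c_0 = 1$ and $c_1 = 0$.

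The final step is to match $D(t)$ against OEIS sequence A072547. I would retrieve the generating function, or the defining recurrence/closed form, listed in that entry and verify that, after aligning the offset, it satisfies the same quadratic $(1-4t)((2+t)G - 1)^2 = (1-t)^2$ and agrees on one initial term, which pins down the correct branch. The main obstacle I anticipate is computational rather than conceptual: correctly performing the residue computation (in particular spotting the factorization of the discriminant and identifying which pole stays inside the contour), and reconciling whatever algebraic form the OEIS entry records with the expression above, since equivalent algebraic generating functions can be written in superficially different ways. A more elementary alternative that sidesteps contour integration would be to track the finitely many near-diagonal sequences $[q^{s-i}] F_s(q)$ for small $i$ directly from the coefficient recurrence $[q^k]F_s = [q^k]F_{s-1} + [q^{k-1}]F_{s-1} + 2[q^{k-2}]F_{s-1} + [q^{k-3}]F_{s-2}$ and solve the resulting coupled linear system for $D(t)$; this involves more bookkeeping but avoids any appeal to complex analysis.
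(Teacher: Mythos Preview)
The paper offers no proof of this statement: it is stated as a conjecture, verified only computationally for $s\le 10$.  So there is no ``paper's own proof'' to compare against; your proposal is an attempt to \emph{settle} the conjecture, not to reproduce an existing argument.

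Your approach is sound and the computations check out.  The bivariate generating function
\[
F(x,q)=\frac{1-(q+2q^2)x}{1-(1+q+2q^2)x-q^3x^2}
\]
is correct, the diagonal-extraction setup is standard, and the discriminant $1-2t-7t^2-4t^3$ does factor as $(1+t)^2(1-4t)$, which is exactly the simplification that makes the problem tractable.  Your proposed closed form
\[
D(t)=\frac{1}{2+t}\Bigl(1+\frac{1-t}{\sqrt{1-4t}}\Bigr)
\]
expands as $1+0\cdot t+2t^2+6t^3+22t^4+80t^5+\cdots$, matching the diagonal coefficients you computed directly from the recurrence.

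The one step you have not actually carried out is the identification with A072547.  That sequence is defined as the main diagonal of an array with a simple additive rule, and its OEIS entry records an explicit generating function involving $\sqrt{1-4t}$; once you write that down, the comparison with your $D(t)$ (after fixing the offset) is a routine algebraic identity.  This is not a gap in the sense of a missing idea, but to turn your proposal into a proof you must exhibit the generating function listed for A072547 and show it agrees with your $D(t)$.  Your alternative of coupling the near-diagonal coefficient sequences would also work and may be preferable if you want to avoid citing a contour-integration result.
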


\begin{conjecture}
For all $s \ge 2$, the coefficient of $q^{s+1}$ in $F_s(q)$ is $\binom{2s+1}{s-1}$.
\end{conjecture}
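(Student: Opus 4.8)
The plan is to pass from the defining recurrence to the bivariate generating function, extract the relevant diagonal, and match it against a standard binomial generating function. First I would record the ordinary generating function $\mathcal{F}(x,q) = \sum_{s \ge 0} F_s(q) x^s$. Substituting the recurrence $F_s = (1+q+2q^2)F_{s-1} + q^3 F_{s-2}$ together with $F_0 = F_1 = 1$ into this sum and solving the resulting linear equation yields
\[
\mathcal{F}(x,q) = \frac{1 - (q+2q^2)x}{1 - (1+q+2q^2)x - q^3 x^2}.
\]
The quantity we want is $[q^{s+1}]F_s = [x^s q^{s+1}]\mathcal{F}(x,q)$, which is a shifted main diagonal of $\mathcal{F}$.

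Next I would extract the generating function $A(t) = \sum_s ([q^{s+1}]F_s)\,t^s$ by the diagonal method. Writing $A_s = [x^s q^s]\bigl(q^{-1}\mathcal{F}\bigr)$ and substituting $x = w$, $q = t/w$, one gets $A(t) = \tfrac{1}{t}\cdot\tfrac{1}{2\pi i}\oint \mathcal{F}(w, t/w)\,dw$, which I would evaluate by the residue at the unique root $w_-(t)$ of the denominator that tends to $0$ as $t \to 0$ (the other root tends to $1$). This produces an algebraic expression built from $\sqrt{\Delta}$ with $\Delta = 1 - 2t - 7t^2 - 4t^3$. The crucial simplification is the factorization
\[
1 - 2t - 7t^2 - 4t^3 = (1+t)^2(1-4t),
\]
so that $\sqrt{\Delta} = (1+t)\sqrt{1-4t}$, which collapses the residue computation. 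Combining this with $1 - 2t - 3t^2 = (1-3t)(1+t)$ and the relation $(1-R)^3 = 4 - 12t - 4(1-t)R$ for $R = \sqrt{1-4t}$, the expression simplifies to
\[
A(t) = t^2\,\frac{C(t)^3}{\sqrt{1-4t}}, \qquad C(t) = \frac{1-\sqrt{1-4t}}{2t}.
\]

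Finally I would invoke the standard identity $\sum_{m \ge 0}\binom{2m+r}{m}t^m = C(t)^r/\sqrt{1-4t}$ with $r = 3$, which gives $A(t) = \sum_{s}\binom{2s-1}{s-2}t^s$ and hence $[q^{s+1}]F_s = \binom{2s-1}{s-2}$. This matches the computed values $1, 5, 21, 84, \dots$; note that $\binom{2s-1}{s-2}$ is OEIS A002054 reindexed, so the exponents in the statement should be read with a unit shift in $s$. The main obstacle is the diagonal extraction itself, namely setting up the substitution, justifying that only the small root contributes, and tracking the square roots carefully; once that is done, the factorization of $\Delta$ makes the identification routine. A purely inductive alternative is tempting, but the recurrence ties $[q^{s+1}]F_s$ to the neighboring near-diagonal coefficients $[q^{s+2}]F_{s-1}$, $[q^{s}]F_{s-1}$, and $[q^{s-1}]F_{s-1}$, so one would have to guess and simultaneously prove closed forms for an entire band of diagonals; the generating-function route sidesteps this cascade.
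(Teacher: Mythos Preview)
The paper offers no proof of this statement: it appears in the final section as one of several open conjectures, verified only numerically for $s \le 10$. So there is nothing in the paper to compare your argument against.

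Your approach is sound and, once the residue computation is carried out in full, would settle the question. The bivariate generating function is correct, and the crucial factorization $1 - 2t - 7t^2 - 4t^3 = (1+t)^2(1-4t)$ is exactly what makes the diagonal extraction collapse to the standard Catalan form $t^2 C(t)^3/\sqrt{1-4t}$. The identification via $\sum_{m \ge 0}\binom{2m+3}{m}t^m = C(t)^3/\sqrt{1-4t}$ is routine. The one place I would ask you to expand is the residue step: you assert that only the small root contributes and that the resulting expression simplifies as claimed, but you do not actually exhibit the intermediate expression before simplification. That is the only genuine work left, and it is mechanical.

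You have also caught something the paper's authors apparently missed. Computing $F_2 = 1+q+2q^2+q^3$, $F_3 = 1+2q+5q^2+6q^3+5q^4+2q^5$, and $F_4 = 1+3q+9q^2+16q^3+22q^4+21q^5+13q^6+4q^7$ gives $[q^{s+1}]F_s = 1,5,21,\ldots$ for $s=2,3,4$, whereas $\binom{2s+1}{s-1} = 5,21,84,\ldots$ for the same $s$. The conjecture as printed is off by one; your corrected formula $[q^{s+1}]F_s = \binom{2s-1}{s-2}$ is the right target, and your proof establishes exactly that.
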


\begin{conjecture}
For all $s \ge 2$, the coefficient of $q^{s-1}$ in $F_s(q)$ is the number of hill-free Dyck paths of semilength $s$.
This is OEIS sequence A116914.
\end{conjecture}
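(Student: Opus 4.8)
The plan is to reduce the conjecture to an identity of generating functions. First I would package the polynomials $F_s(q)$ into the bivariate generating function $\mathcal{F}(x,q) = \sum_{s \ge 0} F_s(q)\,x^s$. Multiplying the defining recurrence $F_s(q) = (1+q+2q^2)F_{s-1}(q) + q^3 F_{s-2}(q)$ by $x^s$, summing over $s \ge 2$, and using $F_0(q) = F_1(q) = 1$ gives the rational form
\[
\mathcal{F}(x,q) = \frac{1 - (q+2q^2)x}{1 - (1+q+2q^2)x - q^3 x^2}.
\]
The quantity we care about, $c_s := [q^{s-1}]F_s(q)$, is then the near-diagonal coefficient $[x^s q^{s-1}]\mathcal{F}(x,q)$, so the target sequence is encoded by the diagonal generating function $D(z) := \sum_{s \ge 1} c_s z^s$.

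Next I would extract $D(z)$ using the standard diagonal (residue) method for rational functions. Writing $q = z/w$ and clearing denominators, one finds
\[
\mathcal{F}(w, z/w) = \frac{(1-z)w - 2z^2}{-w^2 + (1-z)w - z^2(2+z)},
\]
and $D(z)$ equals $z$ times the sum of the residues of $w^{-2}\mathcal{F}(w,z/w)$ at the poles that tend to $0$ as $z \to 0$, namely the double pole $w = 0$ and the small root $w_-(z)$ of $w^2 - (1-z)w + z^2(2+z)=0$. Carrying out the residue computation and simplifying — here the discriminant factors as $(1-z)^2 - 4z^2(2+z) = (1+z)^2(1-4z)$, and this is exactly what collapses the algebra — yields the closed form
\[
D(z) = \frac{1}{2(2+z)^2}\left(\frac{1+5z}{\sqrt{1-4z}} - (1-z)\right).
\]
One checks that this reproduces $c_1 = 1,\ c_2 = 1,\ c_3 = 5,\ c_4 = 16,\ c_5 = 58$, matching the data.

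Finally, I would obtain the generating function for OEIS A116914 directly from its combinatorial definition — via a first-return/arch decomposition of the relevant paths, which produces an algebraic equation and hence a closed form of the same $\sqrt{1-4z}$ type — and then verify that it agrees with the shifted diagonal $\frac{D(z)-z}{z^2} = \sum_{s \ge 2} c_s z^{s-2}$ dictated by the conjecture's indexing ($s \ge 2$). Since both sides are algebraic of this shape, the equality should reduce to a routine rationalization and comparison.

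I expect two steps to carry the real difficulty. The first is the residue bookkeeping in the diagonal extraction: one must correctly identify which poles lie inside the small contour (the pole at $w = 0$ and the root $w_-$ merge into a triple pole at $z = 0$, so the split for $z \ne 0$ must be tracked carefully), though the discriminant factorization keeps the subsequent simplification tractable. The second, and the step most likely to need external input, is the match against A116914: the usual notion of a hill-free Dyck path is counted by the Fine numbers $1,2,6,18,\dots$, which do \emph{not} agree with $c_s = 1,5,16,58,\dots$, so one must first pin down the precise object (or triangle entry) that A116914 actually records and derive its generating function before checking equality with $\frac{D(z)-z}{z^2}$. As an alternative to the analytic matching, one could look for a bijection realizing $c_s$ combinatorially and carrying it onto the A116914 objects, but the indirect, single-$q$-coefficient nature of $c_s$ makes the generating-function route the more direct one.
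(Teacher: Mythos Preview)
The paper offers no proof of this statement: it is listed explicitly as a \emph{Conjecture}, verified numerically only for $s \le 10$. There is therefore nothing in the paper to compare your argument against, and any complete argument you give would go strictly beyond what the paper establishes.

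On the substance of your proposal: the analytic setup is correct. Your closed form $\mathcal{F}(x,q) = \dfrac{1-(q+2q^2)x}{1-(1+q+2q^2)x-q^3x^2}$ follows from the recurrence, the discriminant factorization $(1-z)^2-4z^2(2+z)=(1+z)^2(1-4z)$ is right, and your expression for $D(z)$ reproduces the correct values $c_2=1$, $c_3=5$, $c_4=16$, $c_5=58$. The diagonal/residue extraction is routine for a rational bivariate function, and the pole bookkeeping you flag is real but manageable.

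The genuine gap is exactly the one you already identify: you do not yet know what A116914 actually enumerates. The paper's verbal gloss ``the number of hill-free Dyck paths of semilength $s$'' cannot be taken at face value, since that interpretation gives the Fine numbers $1,0,1,2,6,18,\dots$, not $1,5,16,58,\dots$. So before the matching step can even begin you must consult the OEIS entry directly, extract the precise definition (it is most likely a statistic summed over hill-free Dyck paths rather than a bare count), and derive its generating function. Until that is done, your proposal is a clean reduction of the problem to a single, well-posed identity --- but not yet a proof, and the missing ingredient is external to the computation rather than internal to your method.
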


\begin{conjecture}
For all $s \ge 0$, the sequence of coefficients of $F_s(q)$ is unimodal.
\end{conjecture}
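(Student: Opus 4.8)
The plan is to work directly from the defining recurrence $F_s(q)=(1+q+2q^2)F_{s-1}(q)+q^3F_{s-2}(q)$ in Conjecture~\ref{conj:Fqinv}, rather than from the (still conjectural) interpretation $EN_{s,3}(2143)(q)=q^{9\binom{s}{2}}F_s(1/q)$. Writing $f_{s,k}=[q^k]F_s(q)$, the recurrence reads $f_{s,k}=f_{s-1,k}+f_{s-1,k-1}+2f_{s-1,k-2}+f_{s-2,k-3}$, and the first differences $g_{s,k}=f_{s,k}-f_{s,k-1}$ obey the \emph{same} recurrence $g_{s,k}=g_{s-1,k}+g_{s-1,k-1}+2g_{s-1,k-2}+g_{s-2,k-3}$. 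Unimodality of $F_s$ is precisely the statement that the sequence $(g_{s,k})_k$ changes sign once, from nonnegative to nonpositive. So I would run a strong induction on $s$ (the cases $s=0,1$ being trivial) in which the hypothesis records not only unimodality but also the location of the peak: writing $m_s$ for the peak index of $F_s$, I would carry the invariant that the peak advances by exactly one or two at each step, $m_{s-1}+1\le m_s\le m_{s-1}+2$. The small data support this, since the peaks of $F_2,\dots,F_7$ sit at positions $2,3,4,6,7,8$.

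Granting the hypothesis, the first step is a purely sign-based argument pinning down where $(g_{s,k})_k$ could fail to be single-signed. Set $m=m_{s-1}$. For $k\le m$, every term on the right of the difference recurrence has an index on the ascending side of the peaks of $F_{s-1}$ and $F_{s-2}$, so $g_{s,k}\ge0$; and for $k\ge m+3$, every term is on the descending side, so $g_{s,k}\le0$. Thus the only ambiguous positions are $k=m+1$ and $k=m+2$, and non-unimodality can occur only through the forbidden pattern $g_{s,m+1}<0<g_{s,m+2}$. The second step is to rule this out, and it suffices to prove the single local concavity inequality $g_{s,m+1}\ge g_{s,m+2}$, that is $2f_{s,m+1}\ge f_{s,m}+f_{s,m+2}$, since then $g_{s,m+2}>0$ forces $g_{s,m+1}>0$. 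Establishing this one inequality would simultaneously place $m_s$ in $\{m+1,m+2\}$ and re-establish the peak-advancement invariant, closing the induction.

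The hard part will be exactly this local concavity inequality. The usual machinery is unavailable: these polynomials are neither real-rooted nor log-concave, as one already sees from $F_2(q)=1+q+2q^2+q^3$, whose coefficient sequence $1,1,2,1$ is unimodal but fails log-concavity at the middle ($1^2<1\cdot2$) and which has two complex roots. Consequently Newton's inequalities, interlacing or Sturm-sequence arguments, and the classical closure results for products of symmetric unimodal sequences cannot be invoked, and neither multiplication by $1+q+2q^2$ nor addition of the shifted term $q^3F_{s-2}$ preserves unimodality on its own. What the crux truly requires is \emph{quantitative} control of $F_{s-1}$ across its peak, comparing the last ascending difference $g_{s-1,m}$ with the first descending differences $g_{s-1,m+1},g_{s-1,m+2}$ in the weighted combination produced by the recurrence. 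I expect the right move is to isolate an auxiliary ``straddle'' inequality of this type, bounding $|g_{s-1,m+1}|$ against $g_{s-1,m}$ or asserting a local concavity of $F_{s-1}$ near its peak, that is itself stable under the four-term recurrence; since every finite difference of $(f_{s,k})_k$ satisfies that same recurrence, such an invariant is natural to seek, but controlling it amounts to understanding the full sign-pattern shape of the sequence, which is the real difficulty.

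As an alternative, more combinatorial route, I would exploit the closed form coming from the generating function $\sum_{s\ge0}F_s(q)z^s=\frac{1-q(1+2q)z}{1-(1+q+2q^2)z-q^3z^2}$, which yields $F_s(q)=T_{s-1}(q)+q^3T_{s-2}(q)$, where $T_n(q)=\sum_{d\ge0}\binom{n-d}{d}q^{3d}(1+q+2q^2)^{n-2d}$ is the weighted tiling polynomial of a length-$n$ strip by monominoes of weight $1+q+2q^2$ and dominoes of weight $q^3$. This reduces the problem to the unimodality of the $T_n$ (together with the claim that adjoining the shifted copy $q^3T_{s-2}$ preserves it), and in turn to the unimodality of the powers $(1+q+2q^2)^m$ and the behaviour of their $\binom{n-d}{d}q^{3d}$-weighted superposition. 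Constructing an explicit weight-raising injection on these decorated tilings below the peak would settle the conjecture, but the coloring multiplicity in $1+q+2q^2$ (two colors at weight $2$) and the peak-dependent direction of the injection present the same essential obstacle as the analytic approach.
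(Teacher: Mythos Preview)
The paper offers no proof of this statement: it is listed as an open \emph{Conjecture} in Section~\ref{sec:opfd}, verified only numerically for $s\le 10$. There is therefore no argument in the paper to compare your proposal against.

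Your writeup is a candid strategy outline, not a proof, and you say so yourself. The reduction you sketch is sound as far as it goes: the first differences $g_{s,k}$ do satisfy the same four-term recurrence, and under the peak-advancement invariant the sign of $g_{s,k}$ is forced for $k\le m_{s-1}$ and for $k\ge m_{s-1}+3$, so the only possible failure of unimodality is the pattern $g_{s,m+1}<0<g_{s,m+2}$ with $m=m_{s-1}$. But the entire weight of the argument then rests on the single inequality $2f_{s,m+1}\ge f_{s,m}+f_{s,m+2}$, and you do not prove it; you explicitly flag it as ``the hard part'' and note that real-rootedness, log-concavity, and the standard closure lemmas all fail here (indeed $F_2(q)=1+q+2q^2+q^3$ already witnesses this). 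The alternative route through the tiling polynomials $T_n(q)=\sum_d\binom{n-d}{d}q^{3d}(1+q+2q^2)^{n-2d}$ and the identity $F_s=T_{s-1}+q^3T_{s-2}$ is correct algebra, but as you acknowledge it runs into the same obstacle: unimodality of a $\binom{n-d}{d}q^{3d}$-weighted superposition of the non-symmetric powers $(1+q+2q^2)^{n-2d}$ is not a known closure result, and neither is the claim that adjoining the shifted copy $q^3T_{s-2}$ preserves it. In short, both branches of your plan terminate at an unproved quantitative inequality near the peak, so the proposal does not resolve the conjecture; it remains open, exactly as the paper leaves it.
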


Finally, since $F_s(q)$ is a $q$-analogue of $F_{3s-1}$, it would be interesting and useful to have a $q$-analogue $f_n(q)$ of the Fibonacci numbers such that $F_s(q)$ is $f_{3s-1}(q)$, up to an appropriate power of $q$.

Of course, $\inv$ is just one of many useful and interesting statistics on permutations, so it's natural to ask whether there are results similar to those we have in Section \ref{sec:statistics} for other permutation statistics.
For example, recall that a {\em descent} $i$ in a permutation (or any sequence of integers) $\pi$ is a position with $\pi(i) > \pi(i+1)$, and the {\em major index} of $\pi$, written $\maj(\pi)$, is the sum of the descents in $\pi$.
Then for any rectangular poset $P$ and any forbidden pattern $\sigma$, we define $P(\sigma)[q]$ by
\begin{equation}
P(\sigma)[q] = \sum_{\pi \in P(\sigma)} q^{\maj(\pi)}.
\end{equation}

Although we have no results involving $P(\sigma)[q]$, we do have several conjectures.
To state these conjectures, recall that F\"urlinger and Hofbauer \cite{fh} and Andrews \cite{andrews} (among others) have studied another $q$-Catalan number, which can be defined in terms of the major index.
In particular, let $c_n(q)$ denote the polynomial given by
$$c_n(q) = \sum_{\pi \in CW_n} q^{\maj(\pi)},$$
where the sum on the right is over all Catalan words of length $2n$.
With this notation we have the following conjecture, which we have verified for $s \le 9$ and $t \le 9$.

\begin{conjecture}
For all $s \ge 1$ and all $t \ge 1$,
\begin{enumerate} [{\upshape (i)}] 
\item
$EN_{2,t}(321)[q] = q^t c_t(q);$
\item
$EN_{s,2}(123)[q] = q^{2\binom{s}{2}} c_s(q);$
\item
$NE_{s,2}(123)[q] = q^{s^2} c_s(q);$
\item
$NE_{2,t}(123)[q] = q^{t^2} c_t(q).$
\end{enumerate}
\end{conjecture}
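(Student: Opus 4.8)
The plan is to mirror the proof of Theorem~\ref{thm:qCats}, reusing its four bijections to Catalan words but tracking $\maj$ in place of $\inv$. In each of the four families the relevant linear extensions are exactly the shuffles of the two teeth (for (i),(iv)) or the two columns/spines (for (ii),(iii)), each of which is a monotone run, and the encoding sends an entry to $0$ or $1$ according to which run contains it; the covering (partner) relations force the resulting word $w$ to be a Catalan word of length $2t$ (in (i),(iv)) or $2s$ (in (ii),(iii)). So first I would record, for each family, the explicit shuffle description — increasing runs with the ``big'' run before its partners in (i); decreasing runs with disjoint value ranges in (iv); decreasing runs of interleaved odds and evens in (ii),(iii) — and verify that $\pi\mapsto w(\pi)$ is the claimed bijection onto $CW_t$ (resp.\ $CW_s$).

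The core is a descent analysis classifying each adjacent pair $\pi(i),\pi(i+1)$ by $w_iw_{i+1}$. Entries in a common run are always descents (the decreasing runs of (ii),(iii),(iv)) or never descents (the increasing runs of (i)), contributing a fixed amount to $\maj(\pi)$; entries in opposite runs are descents precisely according to a ballot condition on the prefix of $w$. For (i) and (iv), where the $0$-run and $1$-run occupy disjoint value ranges, every cross pair of type $01$ (resp.\ every cross pair except $10$) is forced, and the elementary identity $\sum_{i:\,w_iw_{i+1}=01} i = n + \maj(w)$ for a Catalan word of length $2n$ yields the pointwise formulas $\maj(\pi)=t+\maj(w)$ for (i) and $\maj(\pi)=\binom{2t}{2}-\maj(w)=t^2+\mathrm{comaj}(w)$ for (iv). For (ii) the same bookkeeping shows that \emph{no} $01$ pair can be a descent (a balanced prefix cannot end in a $0$), giving $\maj(\pi)=4\binom{s}{2}-\maj(w)=2\binom{s}{2}+\mathrm{comaj}(w)$. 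Summing $q^{\maj(\pi)}$ over $w$ then produces $q^{t}c_t(q)$, $q^{t^2}c_t(q)$, and $q^{2\binom{s}{2}}c_s(q)$; for the two $\mathrm{comaj}$ cases I would invoke the palindromicity of $c_n(q)$ (it is the MacMahon $q$-Catalan number $\tfrac1{[n+1]_q}\binom{2n}{n}_q$, so $\mathrm{comaj}$ and $\maj$ are equidistributed on $CW_n$).

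The main obstacle is part (iii). Here the two decreasing runs (odds and evens) interleave in value, and unlike (ii) the ballot slack lets a $01$ pair be a descent exactly when the even entry immediately follows a balanced prefix. This inserts into $\maj(\pi)$ an extra term equal to the sum of the positions of those even entries, which is \emph{not} a function of $\maj(w)$ alone; consequently the odd/even bijection matches only the \emph{multiset} of $\maj$-values (as the data confirm), not the values pointwise, so no clean pointwise identity like those above is available. To finish (iii) I would instead prove that the induced statistic $\maj(\pi)-s^2$ is equidistributed with $\maj$ on $CW_s$: either by showing $\sum_{\pi\in NE_{s,2}(123)}q^{\maj(\pi)-s^2}$ satisfies the standard first-return recurrence for $c_s(q)$ — decomposing the shuffle at the first time the two partner-counts rebalance and checking the $\maj$-shift under concatenation has the required shape — or by composing the odd/even map with an explicit $\maj$-preserving involution on Catalan words that absorbs the extra term. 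I expect the recurrence-matching to be the more robust route, with the bookkeeping of the $\maj$-shift under concatenation being the one genuinely delicate computation.
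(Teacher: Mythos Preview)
The statement you are attempting to prove is labeled a \emph{conjecture} in the paper, not a theorem: the authors explicitly say they ``have no results involving $P(\sigma)[q]$'' and offer this statement only as a conjecture verified numerically for $s,t\le 9$. There is therefore no proof in the paper to compare yours against.

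That said, your argument for parts (i), (ii), and (iv) is essentially complete and correct, and would actually \emph{resolve} three quarters of the open conjecture. The identity $\sum_{i:\,w_iw_{i+1}=01} i = n+\maj(w)$ for a Catalan word $w$ of length $2n$ is genuine (it follows from the telescoping computation $\sum_i i(w_{i+1}-w_i)=n$ using $w_1=0$, $w_{2n}=1$), and it handles (i) pointwise. For (ii) and (iv) you correctly reduce to the palindromicity of $c_n(q)$, which is standard since $c_n(q)=\frac{1}{[n+1]_q}\binom{2n}{n}_q$ has degree $n(n-1)$. One small gap in your treatment of (ii): you check that no $01$ pair is a descent, but your formula $\maj(\pi)=4\binom{s}{2}-\maj(w)$ also requires that \emph{every} $10$ pair is a descent, and since odds and evens interleave in value this is not automatic. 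The check is easy --- at a $10$ position the Catalan condition gives $p\ge q$ in the prefix, while the inequality needed for a descent is only $p\ge q-1$ --- but it should be stated.

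For (iii) you have correctly located the real obstacle: the extra contribution from $01$ pairs immediately following a balanced prefix (equivalently, the starting positions of the hills $UD$ in the associated Dyck path) is not a function of $\maj(w)$ alone, so no pointwise identity of the form $\maj(\pi)=c+\maj(w)$ exists. Concretely, one gets $\maj(\pi)=2s(s-1)-\maj(w)+Y(w)$ with $Y(w)$ the sum of hill-start positions. Your two proposed routes --- matching a first-return recurrence for $c_s(q)$, or an explicit $\maj$-preserving bijection absorbing $Y$ --- are both plausible, but neither is carried out here; so part (iii) remains unproved in your write-up, just as it does in the paper.
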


There are even more interesting directions to proceed in the study of $EN_{s,t}(\sigma)$ when $|\sigma| = 4$.
For example, we can show that when $\sigma = 1243$ the maximum major index of any linear extension in $EN_{s,t}(1243)$ is twice the minimum major index of any such linear extension (a proof we leave as an exercise for the reader), but we have no such results for other forbidden patterns $\sigma$, and we know little else about the distribution of $\maj$ even on $EN_{s,t}(1243)$.
Along these lines, however, it appears that descents in Fuss-Catalan words correspond under our bijection to descents in linear extensions of $EN_{s,t}$ which avoid $1243$, though the converse is not true.
It would be interesting to find a statistic on Fuss-Catalan paths which corresponds with major index on linear extensions under our bijection.
More generally, many statistics on permutations and lattice paths have been studied in various contexts, so it seems likely there are correspondences between statistics on these objects waiting to be discovered.

\bibliographystyle{alpha}
\bibliography{references}

\end{document}